\documentclass[10pt]{amsart}
\usepackage{amsfonts}
\usepackage{amsmath}
\usepackage{amscd}
\usepackage{amssymb}
\usepackage{amsthm}
\usepackage{comment}
\usepackage{enumerate}
\usepackage{hyperref}
\usepackage{mathrsfs} 
\usepackage{breqn}
\usepackage{xcolor}
\usepackage{graphicx}
\usepackage{stmaryrd}
\usepackage{geometry}
\geometry{margin= 1in}
\let\vec\mathbf
\newtheorem{thm}{Theorem}[section]

\newtheorem{prop}[thm]{Proposition}
\newtheorem{lem}[thm]{Lemma}
\newtheorem{cor}[thm]{Corollary}

\newtheorem{definition}[thm]{Definition}
\newtheorem{rmk}[thm]{Remark}

\newcommand{\CC}{\mathcal{C}}

\newcommand{\FF}{\mathfrak{F}}

\newcommand{\LL}{\mathcal{L}}
\newcommand{\PP}{\mathcal{P}}

\newcommand{\TN}{\mathcal{T}^N}
\newcommand{\mt}{\mathcal{T}^N}
\newcommand{\mf}{\mathfrak{F}}
\newcommand{\prst}{\mathbb{P}}

\newcommand{\rr}{\vec{r}_t}

\newcommand{\vc}[1]{{\bf #1}}
\newcommand{\T}{\mathcal{T}^N}
\newcommand{\torN}{\mathcal{T}^N}
\newcommand{\R}{\mathbb{R}}

\newcommand{\E}{\mathbb{E}}
\newcommand{\p}{\mathbb{P}}

\newcommand{\N}{\mathbb{N}}
\newcommand{\B}{\mathfrak{B}}
\newcommand{\BBB}{\mathscr{B}}

\newcommand{\Grad}{\nabla}
\usepackage{authblk}

\numberwithin{equation}{section}

\newcommand{\dd}{\mathrm{d}}
\newcommand{\dif}{\mathrm{d}}
\newcommand{\D}{\mathrm{d}}
\newcommand{\dx}{\,\mathrm{d}x}
\newcommand{\dt}{\,\mathrm{d}t}
\newcommand{\dxt}{\,\mathrm{d}x\,\mathrm{d}t}
\newcommand{\ds}{\,\mathrm{d}s}
\newcommand{\dxs}{\,\mathrm{d}x\,\mathrm{d} s}

\newcommand{\vr}{\varrho}

\newcommand{\bfxi}{\boldsymbol{\xi}}
\newcommand{\bfeta}{\boldsymbol{\eta}}
\newcommand{\bfsigma}{\boldsymbol{\sigma}}
\newcommand{\bfpsi}{\boldsymbol{\psi}}
\newcommand{\bfphi}{\boldsymbol{\varphi}}

\newcommand{\bfU}{\mathbf U}

\newcommand{\bff}{\mathbf f}

\newcommand{\bfq}{\mathbf q}
\newcommand{\bfu}{\mathbf u}

\newcommand{\bfm}{\mathbf m}

\newcommand{\Div}{\mathrm{div}}

\newcommand{\intTor}[1]{\int_{\torN} #1 \ \dx}

 \newcommand{\OTN}{\Omega}

\begin{document}

\title{Compressible Euler equations with transport noise}

\author{Richard Boadi}
\author{Dominic Breit}
\address{University of Duisburg--Essen, Faculty of Mathematics, The-Leymann-Stra\ss e 9, 45127 Essen, Germany.}
\address{TU Clausthal, Institute of Mathematics, Erzstra\ss e 1, 38678 Clausthal-Zellerfeld, Germany.}
\email{richard.boadi@uni-due.de}
\email{dominic.breit@uni-due.de}
\author{Thamsanqa Castern Moyo}
\address{Faculty of Mathematics and Computer Science, FernUniversit\"at in Hagen, Universit\"atsstra\ss e 47, 58097 Hagen, Germany}
\email{thamsanqacastern.moyo@fernuni-hagen.de}

\maketitle

\vspace{-3ex}
\begin{center}
\large{R. Boadi, D. Breit \& T. C. Moyo}
\end{center}
\normalsize

\begin{abstract}
We study the isentropic compressible Euler equations in multi-dimensions with stochastic perturbation of transport type. On the one hand, this is motivated by the physical modelling in turbulence theory. On the other hand, it has been shown recently that this type of noise can have regularising effects. In this paper, we prove  the existence of dissipative measure-valued martingale solutions, the weak-strong uniqueness property and  the existence of Markov selections.
\end{abstract}

%
%
%

\section{Introduction}
We consider the stochastic isentropic Euler equations, describing the flow of a compressible fluid in a bounded domain $\mathcal O \subset R^N$ in $N=2,3$ dimensions:
\begin{align}\label{1.1}
\left\{\begin{array}{rc}
\partial_t \bfm+\mathrm{div}\Big(\tfrac{\bfm\otimes\bfm}{\varrho}\Big)=-\nabla p+\dot{\bfeta},& \mbox{}\\
\partial_t\varrho+\Div (\bfm)=\dot{\xi}.\qquad\quad\quad\quad\,& \mbox{}\end{array}\right.
\end{align}
The unknowns are the momentum $\bfm$ and the density $\varrho$. For technical simplification we identify $\mathcal O$ with the flat torus $\mt$ and complement \eqref{1.1} with periodic boundary conditions. We suppose that the pressure $p$ is barotropic, that is, $p=p(\varrho)=\mathrm{Ma}^{-2}\varrho^\gamma$, where $\mathrm{Ma}>0$ is the Mach-number and $\gamma>1$ the adiabatic exponent.  The noise terms $\dot{\bfeta}$ and $\dot{\xi}$ can depend on the solution $(\bfm,\varrho)$ itself (note that $\dot{\xi}$ in \eqref{1.1}$_2$ is only physically meaningful if it does not perturb the conservation of mass, which requires $\int_{\mathcal O}\dot{\xi}\dx=0$). In the following we describe different scenarios concerning their role.\\\

\textbf{Deterministic case}. In the deterministic case (that is $\dot{\xi}=0$ and $\dot{\bfeta}=0$ or $\dot{\bfeta}=\varrho\bff$ with $\bff$ deterministic) it is well-known that unique smooth solutions only exist locally in time.
In the long-run, multiple solutions exist \cite{Smo} and solutions can develop singularities \cite{BSV1,BSV2}. Weak solutions (where derivatives only exist in the sense of distributions) can be constructed by the method of convex integration introduced in \cite{DS}, but they do not satisfy important physical principles such as energy conservation and maximal dissipation. The wider class of measure-valued solutions can overcome this drawback. Here the nonlinearities of the equations are described by measures capturing oscillations and concentrations of approximate solutions, see \cite{KrZa,Ne}. It is important to note that these solutions can be constructed to comply with the weak-strong uniqueness principle: a measure-valued solution coincides with the strong solution as soon as the latter exists, we refer the reader to  \cite{BDS,GSWW} for more details.\\

\textbf{Probabilistic case.} From a probabilistic point of view two concepts of solutions are common: probabilistically strong (pathwise solution) and Probabilistically weak (martingale solution). We speak of a probabilistically strong solution if the solution can be constructed with respect to a given noise on a given probability space.  We say a solution is  probabilistically weak if the underlying probability space is not a priori known,  but becomes an integral part of the solution. In what follows we distinguish between two different ways how stochasticity can enter the equations.

\textbf{Stochastic forcing}. We speak of stochastic forcing if
 \begin{align*}
\dot{\bfeta}=\Phi(\varrho,\bfm)\,\frac{\dd W}{\dt},\quad \dot{\xi}=0,
\end{align*}
with a (possibly infinite-dimensional) Wiener process $W$ and an operator $\Phi$ with appropriate growth assumptions. The easiest case is additive noise, where $\Phi(\varrho,\bfm)=\varrho\Psi$ with $\Psi$ independent of $\varrho$ and $\bfm$. The first results for \eqref{1.1} with stochastic forcing can be found in \cite{BeVo}, where the existence of martingale solutions to the one-dimensional problem has been shown. Local well-posedness (in smooth function spaces) in general dimensions is the object of \cite{BrMe,JUKim}.

In \cite{BFHci} global-in-time weak solutions to \eqref{1.1} have been constructed by the method of convex integration: In fact, there exist infinitely many weak solutions with the same initial data. However, these solutions have yet two drawbacks: They only exist for very smooth initial data and only up to a (possibly large) stopping time. On the other hand, it is quite interesting to note that these solutions are strong in the probabilistic sense (all of them exist on the same probability space). This is surprising since the existence of pathwise solutions is typically linked to some sort of uniqueness as a consequence of the classical Yamada-Watanabe theorem (existence of a martingale solution and pathwise uniqueness imply existence of a probabilistically strong solution). 

Measure-valued martingale solutions to \eqref{1.1} with stochastic forcing were constructed in \cite{HKS}. As in the deterministic case, they satisfy an energy inequality which gives rise to a weak-strong uniqueness principle (different from analytically weak solutions).

Much more is known in the viscous case: a systematic study of the compressible Navier--Stokes system with stochastic forcing has been initiated in \cite{BH} leading to the research monograph \cite{BFHbook}.\\\

\textbf{Transport noise}. We speak of transport noise if
\begin{align}\label{eq:transport}
\dot{\bfeta}=\sum_{k=1}^K\Div(\bfsigma_k\otimes\bfm)\circ\frac{\dd W_k}{\dt},\quad\dot{\xi}=\sum_{k=1}^K\Div(\varrho\bfsigma_k) \circ\frac{\dd W_k}{\dt}, 
\end{align}
with given vector fields $\bfsigma_k$ and stochastic differentials with respect to independent Wiener process $W_k$ in the Stratonovich sense (an infinite sum is possible too). 
The motivation for transport noise is twofold:
\begin{itemize}
\item Holm derived in \cite{HOLM2,HOLM,HOLM1} models with transport noise
in fluid dynamics with arguments from geometric mechanics with the aim of modelling turbulent effects (see, in particular, \cite{HOLM2} for the compressible case). The determination of  the vector fields $\bfsigma_{k}$, $k=1,\dots,K$ is discussed in \cite{Co1,Co2}.
\item Transport noise can have regularisation effects as demonstrated in \cite{FGP} for the transport equation and very recently in \cite{FL} for the 3D incompressible Navier--Stokes equations. 
\end{itemize}
The literature on \eqref{1.1} with transport noise is rare and the only contribution is \cite{ART}
giving local well-posedness.\footnote{To see that \eqref{1.1} fits into the abstract framework from \cite{ART} one has to rewrite \eqref{1.1}$_2$ in the velocity formulation by formerly dividing by $\varrho$.}  Let us also mention \cite{BFHM,BFHZ} where the compressible Navier--Stokes system (the viscous version of \eqref{1.1}) with transport noise is considered.\\\

The aim of the present paper is to initiate a systematic study of the compressible Euler systems \eqref{1.1} with transport noise  \eqref{eq:transport}.
The first step is to prove the existence of measure-valued martingale solutions satisfying an energy balance, see Theorem \ref{thm:main} for the complete statement. This is achieved as the vanishing viscosity limit of the corresponding Navier--Stokes system from \cite{BFHM}. The core of our analysis is
a refined stochastic compactness method employing Jakubowsi's version \cite{jakubow} of the Skorokhod representation theorem.
Interestingly, the limit procedure in the energy balance is easier than for the problem with stochastic forcing.
The reason for this is that the transport noise in \eqref{eq:transport} is energy conservative and hence we have a purely deterministic energy balance. A second step of our analysis is the weak strong-uniqueness result in Theorem \ref{thm_a}. It is motivated by \cite[Theorem 5.12]{FlaRom} and states that the probability distribution of a martingale solutions coincides with that of a strong solution up to the blow-up time of the latter.

For stochastic evolution problems, a further well-posedness result is of great interest:
the Markov property. It means that the probability distribution of the future only depends on the present state of the evolution and is independent of the past. This is rather obvious for problems with unique solutions, while one may hope to select solutions with the Markov if uniqueness is not available. This idea originates from Krylov \cite{Kr} who considered finite dimensional equations. Eventually, the existence of a Markov selection for the 3D incompressible Navier--Stokes equations was proved in \cite{FlaRom} which paved the way for other infinite dimensional problems. In particular, the compressible Navier--Stokes equations \cite{BFHAAP} as well as the 3D incompressible Euler equations \cite{HZZ} have been considered. Note that all these results are concerned with stochastic forcing. In Theorem \ref{Mselection} we obtain the first result for problems with transport noise. Again we benefit from the deterministic energy balance: Incorporating the energy balance with its stochastic terms is the most
delicate part in \cite{FlaRom} and subsequent papers. Our situation is significantly easier. 

\section{Mathematical framework}

\subsection{Stratonovich integration}
Let $(\Omega,\mathfrak F,(\mathfrak F_t)_{t\geq0},\mathbb P)$ be a filtered probability space and let $W=(W_k)_{k=1}^K$ be a collection of independent standard $(\mathfrak F_t)$-Wiener processes.
Set $\mathbb Q=(\bfsigma_k)_{k=1}^K$, where $\bfsigma_k:\TN\rightarrow\TN$, $k=1,\dots,K$ and $K\in\N,$ are smooth and solenoidal vector fields.
We define the Stratonovich integrals in \eqref{eq:transport} by means of the It\^{o}--Stratonovich correction.
First, we define the stochastic integrals
\begin{align*}
&\int_0^t\Div (\varrho \mathbb{Q} ) \,\D W=\sum_{k=1}^K\int_0^t\Div (\varrho \bfsigma_k ) \,\D W_k,\\
&\int_0^t\Div (\bfm\otimes \mathbb{Q} ) \,\D W=\sum_{k=1}^K\int_0^t\Div (\bfm\otimes \bfsigma_k ) \,\D W_k,
\end{align*}
as It\^{o}-integrals on the Hilbert spaces $W^{-\ell,2}(\TN)$ and $W^{-\ell,2}(\TN,R^N)$, $\ell>N/2+1$, respectively. Indeed, if $\varrho$ and $\bfm$
are $(\mathfrak F_t)$-adapted stochastic processes taking values in 
$C_{\rm{weak}}([0,T];L^{q}(\TN))$ and $C_{\rm{weak}}([0,T];L^{q}(\TN,R^N))$, $q > 1$, respectively, and the vector fields  $\bfsigma_{k}$ are bounded, then
the It\^{o}-integrals 
\begin{align*}
&\int_0^t\langle\Div (\varrho \mathbb{Q} ),\psi\rangle \,\D W=-\sum_{k=1}^K\int_0^t\int_{\TN}\varrho \bfsigma_k \cdot\nabla_x\psi\dx \,\D W_k,\quad\psi\in W^{\ell,2}(\TN),\\
&\int_0^t\langle\Div (\bfm\otimes \mathbb{Q} ),\bfpsi\rangle \,\D W=-\sum_{k=1}^K\int_0^t\int_{\TN}\bfm\otimes \bfsigma_k :\nabla_x\bfpsi\dx \,\D W_k,\quad\bfpsi\in W^{\ell,2}(\TN,R^N),
\end{align*}
are well-defined. The corresponding Stratonovich integrals are now defined via the It\^{o}--Stratonovich correction, that is
\begin{align*}
\int_0^t\int_{\TN}\varrho \bfsigma_k \cdot\nabla_x\psi\dx \,\circ\D W_k&=\int_0^t\int_{\TN}\varrho \bfsigma_k \cdot\nabla_x\psi\dx \,\D W_k\\&+\frac{1}{2}\Big\langle\Big\langle\int_{\TN}\varrho \bfsigma_k \cdot\nabla_x\psi\dx, W_k\Big\rangle\Big\rangle_t,\\
\int_0^t\int_{\TN}\bfm\otimes \bfsigma_k :\nabla_x\bfpsi\dx \,\circ\D W_k&=\int_0^t\int_{\TN}\bfm\otimes \bfsigma_k :\nabla_x\bfpsi\dx \,\D W_k\\&+\frac{1}{2}\Big\langle\Big\langle\int_{\TN}\bfm\otimes \bfsigma_k :\nabla_x\bfpsi\dx,W_k\Big\rangle\Big\rangle_t,
\end{align*}
where $\langle\langle\cdot,\cdot\rangle\rangle_t$ denotes the cross variation. We compute now the cross variations by means of \eqref{1.1} with \eqref{eq:transport}. Spelling out only the martingale part, we have
\begin{align*}
\left[\int_{\TN}\varrho \bfsigma_k \cdot\nabla_x\psi\dx\right](t)&=\dots-\sum_{\ell}\int_0^t\int_{\TN}\varrho \bfsigma_\ell \cdot\nabla_x(\bfsigma_k \cdot\nabla_x\psi)\dx \,\D W_\ell,\\
\left[\int_{\TN}\bfm\otimes \bfsigma_k :\nabla_{x}\bfpsi\dx\right](t)&=\dots-\sum_{\ell}\int_0^t\int_{\TN}\bfm\cdot (\bfsigma_\ell \cdot\nabla_{x}(\bfsigma_k\cdot\nabla_{x}\bfpsi))\dx \,\D W_\ell,
\end{align*}
where $\psi\in W^{\ell+1,2}(\TN)$ and $\bfpsi\in W^{\ell+1,2}(\TN,R^N)$.
Here the deterministic terms of the equations with quadratic variation zero are irrelevant and hidden. Plugging the previous considerations together we set
\begin{align*}
&\int_0^t\Div (\varrho \mathbb{Q} ) \,\circ\D W=\sum_{k=1}^K\int_0^t\Div (\varrho \bfsigma_k ) \,\D W_k+\frac{1}{2}\sum_{k=1}^K\int_0^t\Div(\bfsigma_k \otimes\bfsigma_k\nabla_{x}\varrho) \ds,\\
&\int_0^t\Div (\bfm\otimes \mathbb{Q} ) \,\circ\D W=\sum_{k=1}^K\int_0^t\Div (\bfm\otimes \bfsigma_k ) \,\D W_k+\frac{1}{2}\sum_{k=1}^K\int_0^t\Div(\bfsigma_k\otimes\bfsigma_k\nabla_{x}\bfm)\ds,
\end{align*}
to be understood in $W^{-\ell-1,2}(\TN)$ and $W^{-\ell-1,2}(\TN,R^N)$ respectively.

\subsection{Navier--Stokes equations}
We approximate \eqref{1.1} by its viscous counterpart
and pass eventually with the viscosity to zero. 
The system in question (with fixed viscosity coefficients) consists of the continuity equation for the fluid density $\varrho:\Omega\times[0,T]\times\TN\to R$ and the momentum equation for the  velocity field $\bfu:\Omega\times[0,T]\times\TN\to R^{N}$ given by
\begin{align}
\dd\varrho+\Div (\varrho\bfu)\dt&=\Div (\varrho \mathbb{Q} ) \circ\D W,\label{1.1a}\\
\dd(\varrho\bfu)+\Div\,\big(\varrho\bfu\otimes\bfu\big)\dt+\nabla  p(\varrho)\dt&=\Div\,\mathbb S(\nabla\bfu)\dt+\Div (\bfm\otimes \mathbb{Q} )\circ\D W,\label{1.1b}
\end{align}
 supplemented with the initial conditions $(\varrho(0),\varrho\bfu(0))=(\varrho_{0},\bfm_{0})$.
The stress tensor follows Newton's rheological law
\begin{align}\label{eq:nr}
\mathbb S(\nabla\bfu)=\mu\big(\nabla\bfu+\nabla\bfu^\top \big)+\lambda\Div\bfu\,\mathbb I
\end{align}
with strictly positive viscosity coefficients $\mu$ and $\lambda$. A solution to \eqref{1.1a}--\eqref{eq:nr} can be defined as follows.
\begin{definition}\label{MD1B}[Dissipative martingale solution]\label{MD1}
Let $(\varrho_0,\bfm_0)\in L^\gamma(\mt)\times L^{\frac{2\gamma}{\gamma+1}}(\mt)$ with $\varrho_0\geq0$ a.a. be deterministic initial data.
The quantity  $$\big((\Omega,\mf,(\mf_t)_{t\geq0},\prst),\varrho,\bfu,W)$$
is called a {\em dissipative martingale solution} to \eqref{1.1a}--\eqref{eq:nr} with initial data $(\varrho_0,\bfm_0)$, provided the following holds:
\begin{enumerate}[(a)]
\item $(\Omega,\mf,(\mf_t)_{t\geq 0},\prst)$ is a stochastic basis with a complete right-continuous filtration;
\item $W$ is a $K$-dimensional $(\mf_t)$-Wiener process;
%
    \item[(c)] The density $\varrho$ is $(\FF_t)$-adapted and satisfies $\p$-a.s.
    \[
    \varrho \in C_{\mathrm{loc}}([0,\infty), W^{-4,2}(\T))\cap L_{\mathrm{loc}}^{\infty}(0,\infty;L^{\gamma}(\T));
    \]
    \item[(d)] The momentum $\vec m:=\varrho\vec u$ is $(\FF_t)$-adapted and satisfies $\p$-a.s.
    \[
    \vec m\in C_{\mathrm{loc}}([0,\infty), W^{-4,2}(\T))\cap L_{\mathrm{loc}}^{\infty}(0,\infty;L^{\frac{2\gamma}{\gamma +1}}(\T));
    \]
\item The velocity field $\bfu$ is $(\FF_t)$-adapted\footnote{Adaptedness of the velocity field is understood in the sense of random distributions, cf. \cite[Chapter~2.2]{BFHbook}.} and satisfies $\p$-a.s. $$
 L^2_{\mathrm{loc}}(0,\infty; W^{1,2}(\mt,R^N));
$$
\item We have $(\varrho(0),(\varrho\bfu)(0))=(\varrho_0,\bfm_0)$ $\mathbb P$-a.s.;
\item The continuity equation holds in the sense that for all $t\geq0$
\begin{align}\label{eq:condW2}
\begin{aligned}
\int_{\mt}\varrho\psi\dx\bigg|_{s=0}^{s=t}&-
\int_0^t\int_{\mt}\varrho \bfu\cdot\nabla_{x}\psi\dxs\\&=
-\int_{\mt}\int_0^t\varrho \mathbb Q\cdot\nabla_x \psi\,\dd W\dx\\
&+\frac{1}{2}\sum_{k=1}^K\int_0^t\int_{\mt}\varrho\Div(\bfsigma_k \otimes\bfsigma_k\nabla_{x}\psi)\dx \ds
\end{aligned}
\end{align}
$\mathbb P$-a.s.  for all $\psi\in C^\infty(\TN)$;
\item The momentum equation holds in the sense that\begin{align}\label{eq:momdW2}
\begin{aligned}
\int_{\mt} \varrho \bfu\cdot \bfpsi\dx\bigg|_{s=0}^{s=t}& -\int_0^t\int_{\mt}\varrho \bfu\otimes \bfu:\nabla_x \bfpsi\dxs
\\
&+\int_0^t\int_{\mt}\mathbb S(\nabla_x \bfu):\nabla_x \bfpsi \dxs-\int_0^t\int_{\mt}
p(\varrho)\,\Div \bfpsi \dxs
\\&=-\int_{\mt}\int_0^t\varrho \bfu\otimes\mathbb Q:\nabla_x \bfpsi\,\dd W\dx\\&+\sum_{k=1}^K\int_0^t\int_{\mt}\varrho\bfu\cdot\Div(\bfsigma_k\otimes\bfsigma_k\nabla_{x}\bfpsi)\dx \ds
\end{aligned}
\end{align} 
$\mathbb P$-a.s. for all $\bfpsi\in C^\infty(\mt,R^{N})$;
\item The energy inequality is satisfied in the sense that
\begin{align} \label{eq:enedW2NS}
\begin{aligned}
- \int_0^\infty &\partial_t \psi \,
\mathscr E^{{\tt NS}} \dt+\int_0^\infty\psi\int_{\mt}\mathbb S(\nabla_x \bfu):\nabla_x \bfu\dxt\leq
\psi(0) \mathscr E^{{\tt NS}}(0)
\end{aligned}
\end{align}
holds $\mathbb P$-a.s. for any $\psi \in C^\infty_c([0, \infty))$.
Here, we abbreviated
$$\mathscr E^{{\tt NS}}(t)= \int_{\TN}\Big(\frac{1}{2} \varrho(t) | {\bfu}(t) |^2 + \frac{a}{\gamma-1}(\varrho(t))^\gamma\Big)\dx.$$
\end{enumerate}
\end{definition}
The following theorem is proved in \cite{BFHM}.\footnote{Note that an arbitrary finite time interval is considered in \cite{BFHM} but an extension to $(0,\infty)$ is straightforward.}

\begin{thm}\label{thm:main}
Let $\bfsigma_k \in C^\infty (\TN, R^N)$, ${\rm div}\bfsigma_k = 0$, $k=1,\dots,K$, and let $\gamma > \frac{N}{2}$, $N= 2,3$ be given.
Suppose that $\varrho_0\in L^\gamma(\mt),\bfm_0\in L^{\frac{2\gamma}{\gamma+1}}(\mt)$ with $\varrho_0\geq0$ a.a.  and $\varrho_0^{-1/2}\bfm_0\in L^2 (\mt, R^N)$ 
are deterministic initial data. 
Then there exists a dissipative martingale solution to \eqref{1.1a}--\eqref{eq:nr} with the initial condition $(\varrho_{0},\bfm_{0})$ in the sense of Definition \ref{MD1B}. 
\end{thm}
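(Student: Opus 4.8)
The plan is to obtain the dissipative martingale solution as the limit of a three‑level approximation scheme, marrying the classical Feireisl--Lions theory for the compressible Navier--Stokes system with the stochastic compactness method built on Jakubowski's version of the Skorokhod theorem. For parameters $n\in\N$ (Galerkin dimension), $\varepsilon>0$ (artificial viscosity) and $\delta>0$ (artificial pressure) I would replace \eqref{1.1a}--\eqref{eq:nr} by the regularised system in which the continuity equation carries a dissipative term $\varepsilon\Delta\varrho$, the pressure is augmented to $p(\varrho)+\delta\varrho^\beta$ with $\beta$ large, and the momentum balance is projected onto a finite‑dimensional space $X_n$ of smooth deterministic functions and supplemented by the compensating term $\varepsilon\nabla\varrho\cdot\nabla\bfu$ that preserves the energy structure. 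For fixed $(n,\varepsilon,\delta)$ and smooth data bounded away from vacuum, the continuity equation is a linear parabolic SPDE (the Stratonovich transport noise understood through its It\^o form with the correction displayed above), solvable by standard theory and admitting a maximum principle keeping $\varrho$ positive and bounded; substituting into the momentum equation gives a finite‑dimensional SDE with locally Lipschitz coefficients, solved up to a stopping time and globalised by the energy estimate below.

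Applying It\^o's formula to $\mathscr E^{\tt NS}$ together with the artificial‑pressure contribution $\tfrac{\delta}{\beta-1}\varrho^\beta$, the stochastic integrals generated by the transport noise and their It\^o corrections cancel exactly — this is the energy‑conservative structure of \eqref{eq:transport}, guaranteed by ${\rm div}\,\bfsigma_k=0$ — so one obtains a purely \emph{deterministic} energy balance at every approximation level, with no remainder martingale. This yields, uniformly in $(n,\varepsilon,\delta)$, the bounds for $\sqrt{\varrho}\,\bfu$ in $L^\infty_tL^2_x$, for $\varrho$ in $L^\infty_tL^\gamma_x$ and for $\nabla\bfu$ in $L^2_{t,x}$ (together with $\varepsilon$‑ and $\delta$‑dependent bounds to be consumed in the respective limits) and, via a Bogovskii‑operator test function, the crucial higher integrability of the pressure.

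Next I would let $n\to\infty$, then $\varepsilon\to0$, then $\delta\to0$. At each stage the deterministic energy bound, combined with equicontinuity in a negative Sobolev space read off from the equations (the stochastic term controlled by a Kolmogorov/Aldous‑type moment estimate), gives tightness of the laws of $(\varrho,\bfu,W)$ on path spaces of the type $C_{\rm weak}([0,T];L^\gamma(\mt))\times(L^2(0,T;W^{1,2}(\mt,R^N)),\mathrm{w})\times C([0,T];R^K)$; Jakubowski's Skorokhod representation theorem then furnishes a new stochastic basis on which the approximations converge $\p$‑a.s., and the limiting stochastic integrals are recovered by the standard martingale‑characterisation argument, using convergence of the quadratic and cross variations. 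Lower semicontinuity transfers the energy balance to the limit, giving the inequality \eqref{eq:enedW2NS}, and a concatenation/diagonal argument upgrades the solution from $[0,T]$ to $[0,\infty)$.

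The main obstacle is the strong convergence of the density, needed to pass to the limit in $p(\varrho)=a\varrho^\gamma$ and in $\delta\varrho^\beta$. This forces one to transplant the Feireisl--Lions effective‑viscous‑flux identity to the present stochastic transport‑noise setting: testing the momentum equation with $\nabla\Delta^{-1}[\varrho_k]$, resp.\ $\nabla\Delta^{-1}[b(\varrho_k)]$, one must show $\overline{p(\varrho)\,\varrho}-(\lambda+2\mu)\,\overline{\varrho\,\Div\bfu}=\overline{p(\varrho)}\,\varrho-(\lambda+2\mu)\,\varrho\,\Div\bfu$ (overbars denoting weak limits), the transport‑noise terms being harmless precisely because the $\bfsigma_k$ are smooth and solenoidal, so no new commutator obstructions appear; combined with the renormalised continuity equation and Feireisl's oscillation‑defect estimate (in the $\varepsilon\to0$ passage one additionally needs the $\varepsilon$‑vanishing of the parabolic defect), this forces $\varrho_k\to\varrho$ a.s.\ in $L^1((0,T)\times\mt)$ and closes each limit. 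Careful bookkeeping of the It\^o corrections, which are second order in $x$, is the part demanding the most work, but their quadratic‑variation origin keeps them compatible with the compactness framework.
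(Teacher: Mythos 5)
The paper does not prove Theorem~\ref{thm:main} at all: immediately after the statement it remarks that ``The following theorem is proved in \cite{BFHM}'' (with a footnote about extending the finite time interval to $(0,\infty)$). So there is no in-paper proof to compare against; the result is imported as a black box, serving as the approximation layer from which Theorem~\ref{thm:mainEuler} is then derived by the vanishing-viscosity limit in Section~\ref{mvsproof}.

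That said, your blind sketch is a plausible and essentially standard reconstruction of the argument behind the cited result. The three-level Feireisl--Lions scheme (Galerkin, artificial viscosity $\varepsilon\Delta\varrho$, artificial pressure $\delta\varrho^\beta$), the stochastic compactness via Jakubowski's Skorokhod representation and martingale identification, the effective-viscous-flux identity together with the renormalised continuity equation and oscillation-defect bound for strong density convergence --- this is precisely the framework of the monograph \cite{BFHbook} and the transport-noise paper \cite{BFHZ}, both by (largely) the same authors as \cite{BFHM}. You also correctly identify the structural simplification that this paper exploits downstream: because $\operatorname{div}\bfsigma_k=0$ and the noise is in Stratonovich form, the stochastic terms cancel in the energy computation, leaving a deterministic inequality \eqref{eq:enedW2NS} with no remainder martingale. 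Two places where the sketch is lighter than a genuine proof would have to be: (i) you assert without detail that no new commutator obstructions appear in the effective-viscous-flux argument when the test function $\nabla\Delta^{-1}[\varrho]$ hits the noise and the second-order It\^{o} correction $\tfrac12\sum_k\Div(\bfsigma_k\otimes\bfsigma_k\nabla\cdot)$ --- these generate Riesz-transform commutators of the $\mathcal{R}_{ij}$-with-smooth-multiplier type and do require a Friedrichs/Div-Curl lemma argument, not merely the observation that $\bfsigma_k$ is smooth and solenoidal; and (ii) the finite-dimensional Galerkin stage for the momentum equation is a stochastic ODE coupled to the parabolic continuity SPDE, so the ``locally Lipschitz, then globalise'' step needs the a priori bound to run before the solution is constructed, typically handled via a truncation/stopping-time argument rather than asserted in one line. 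Neither is a fatal gap, but neither is routine. For the purposes of this paper, though, the correct ``proof'' of Theorem~\ref{thm:main} is simply the citation to \cite{BFHM}.
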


\subsection{Euler equations}\label{MVS}
In this section, we present solution concepts for the Euler equations
\begin{align}
\dd\varrho+\Div (\varrho\bfu)\dt&=\Div (\varrho \mathbb{Q} ) \circ\D W,\label{1.1A}\\
\dd\bfm+\Div\,\Big(\frac{\bfm\otimes\bfm}{\varrho}\Big)\dt+\nabla p( \varrho)\dt&=\Div (\bfm\otimes \mathbb{Q} )\circ\D W,\label{1.1B}
\end{align}
 supplemented with the initial conditions $(\varrho(0),\bfm(0))=(\varrho_{0},\bfm_{0})$.

\begin{definition}[Dissipative measure-valued martingale solution]\label{E:dfn}
Let $(\varrho_0,\bfm_0)\in L^\gamma(\mt)\times L^{\frac{2\gamma}{\gamma+1}}(\mt)$ with $\varrho_0\geq0$ a.a. be deterministic initial data. Then 
\[
((\Omega,\FF, (\FF_t)_{t\geq 0},\p),\varrho,\vec m,\mathcal{R}_{\tt{conv}},\mathcal{R}_{\tt{press}}, W)
\]
is called a dissipative measure-valued
martingale solution to (\ref{1.1A})--(\ref{1.1B}) with initial data $(\varrho_0,\bfm_0)$, provided\footnote{Some of our variables are not stochastic processes in the classical sense and we interpret their adaptedness in the sense of random distributions as introduced in \cite{BFHbook} (Chap. 2.2).}:

\begin{enumerate}
    \item [(a)] $(\Omega,\FF, (\FF_t)_{t\geq 0},\p)$ is a stochastic basis with complete right-continuous filtration;
    \item[(b)]$W$ is a $K$-dimensional $(\FF_t)$-Wiener process;
    \item[(c)] The density $\varrho$ is $(\FF_t)$-adapted and satisfies $\p$-a.s.
    \[
    \varrho \in C_{\mathrm{loc}}([0,\infty), W^{-4,2}(\T))\cap L_{\mathrm{loc}}^{\infty}(0,\infty;L^{\gamma}(\T));
    \]
    \item[(d)] The momentum $\vec m$ is $(\FF_t)_{t\geq 0}$-adapted and satisfies $\p$-a.s.
    \[
    \vec m\in C_{\mathrm{loc}}([0,\infty), W^{-4,2}(\T))\cap L_{\mathrm{loc}}^{\infty}(0,\infty;L^{\frac{2\gamma}{\gamma +1}}(\T));
    \]
    \item[(e)] The parametrised measures $(\mathcal{R}_{\tt{conv}},\mathcal{R}_{\tt{press}})$ are $(\FF_t)$-adapted and satisfy $\p$-a.s.
    \begin{align}\label{eq:para}
    t\mapsto \mathcal{R}_{\tt{conv}}(t) &\in L_{\mathrm{weak-(*)}}^{\infty}(0,\infty;\mathcal{M}^+(\T, R^{N\times N}));\\
    t\mapsto \mathcal{R}_{\tt{press}}(t) &\in L_{\mathrm{weak-(*)}}^{\infty}(0,\infty;\mathcal{M}^+(\T, R));
    \end{align}
    \item[(f)]  We have $\varrho(0,\cdot),\vec m (0,\cdot))=(\varrho_0,\vec m_0)$ $\mathbb P$-a.s.
    \item[(g)]
The continuity equation holds in the sense that for all $t\geq0$
    \begin{align}\label{eq:cont}
\begin{aligned}
       \left[\int_{\T}\varrho\varphi\, \dd x\right]_{s=0}^{s=t}& = \int_{0}^{t}\int_{\T} \vec m \cdot \nabla \varphi \, \dd x\,\dd s
-\int_{\mt}\int_0^t\varrho \mathbb Q\cdot\nabla_x \varphi\,\dd W\dx\\&+\sum_{k=1}^K\int_0^t\int_{\mt}\varrho\,\Div(\bfsigma_k\otimes\bfsigma_k\nabla_{x}\varphi)\dx \ds
\end{aligned}
    \end{align}$\p$-a.s.
for all $\varphi \in C^{\infty}(\T)$;
    \item[(h)]
The momentum equation holds in the sense that for all $t\geq0$
    \begin{align}\label{eq:mcxs}
\begin{aligned}
         \left[\int_{\T}\vec m \cdot \boldsymbol{\varphi}\dx\right]_{s=0}^{s=t}&=\int_{0}^{t}\int_{\T}\Big(\frac{\vec m \otimes\vec m}{\varrho}:\nabla\boldsymbol{\varphi}+p(\varrho)\Big)\, \dd x\, \dd s\\
         &+\int_{0}^{t}\int_{\T}\nabla \boldsymbol{\varphi}:\dd \mathcal{R}_{\tt{conv}}\, \dd s+\int_{0}^{t}\int_{\T}\mathrm{div} \boldsymbol{\varphi}\,\dd \mathcal{R}_{\tt{press}}\, \dd s\\
         &-\sum_{k=1}^K\int_{\mt}\int_0^t\bfm\otimes\bfsigma_k:\nabla_x \boldsymbol{\varphi}\,\dd W_k\dx\\&+\sum_{k=1}^K\int_0^t\int_{\mt}\bfm\cdot\Div(\bfsigma_k\otimes\bfsigma_k\nabla_{x}\boldsymbol{\varphi})\dx \ds
\end{aligned}
    \end{align}
$\p$-a.s.
for all $\boldsymbol{\varphi} \in C^{\infty}(\T)$;
    \item[(i)] 
The energy inequality is satisfied in the sense that
\begin{align} \label{eq:enedW2}
\begin{aligned}
- \int_0^\infty &\partial_t \psi \,
\mathscr E^{{\tt Euler}} \dt\leq
\psi(0) \mathscr E_0
\end{aligned}
\end{align}
holds $\mathbb P$-a.s. for any $\psi \in C^\infty_c([0, \infty))$.
Here, we abbreviated
$$\mathscr E^{\tt{Euler}} (t)= \int_{\TN}\Big(\frac{1}{2} \frac{| {\bfm}(t) |^2}{\varrho(t)} + \frac{a}{\gamma-1}(\varrho(t))^\gamma\Big)\dx+\frac{1}{2}\int_{\T}\dd \,\mathrm{tr}\mathcal{R}_{\tt{conv}}(t) +\frac{1}{\gamma-1}\int_{\T}\dd \,\mathrm{tr}\mathcal{R}_{\tt{press}}(t)$$
    for $t \geq 0$ and 
    \[
   \mathscr E_0= \int_{\T}\Big(\frac{1}{2}\frac{|\vec m_0 |^2}{\varrho_0}+\frac{a}{\gamma-1}\varrho_0^\gamma\Big)\, \dd x.
    \]
\end{enumerate}
\end{definition}
Our first main result is the following. 
\begin{thm}\label{thm:mainEuler}
Let $\bfsigma_k \in C^\infty (\TN, R^N)$, ${\rm div}\bfsigma_k = 0$, $k=1,\dots,K$, and let $\gamma > \frac{N}{2}$, $N= 2,3$ be given.
Suppose that $\varrho_0\in L^\gamma(\mt),\bfm_0\in L^{\frac{2\gamma}{\gamma+1}}(\mt)$ with $\varrho_0\geq0$ a.a.  and $\varrho_0^{-1/2}\bfm_0\in L^2 (\mt, R^N)$ 
are deterministic initial data. 
Then there exists a dissipative measure-valued martingale solution to \eqref{1.1A}--\eqref{1.1B} with the initial data $(\varrho_{0},\bfm_{0})$ in the sense of Definition \ref{E:dfn}. 
\end{thm}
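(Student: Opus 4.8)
The plan is to obtain the measure-valued solution as the vanishing viscosity limit of the Navier--Stokes system from Theorem~\ref{thm:main}. Fix sequences $\mu_n,\lambda_n\searrow0$ and let $\big((\Omega_n,\mathfrak F_n,(\mathfrak F_{t,n})_{t\geq0},\mathbb P_n),\varrho_n,\bfu_n,W_n\big)$ be a dissipative martingale solution of \eqref{1.1a}--\eqref{eq:nr} with viscosity coefficients $(\mu_n,\lambda_n)$ and the prescribed deterministic data $(\varrho_0,\bfm_0)$ in the sense of Definition~\ref{MD1B}, and set $\bfm_n:=\varrho_n\bfu_n$. Since $\mathscr E^{\tt NS}_n(0)=\mathscr E_0$ is independent of $n$ and the transport noise is energy conservative, the energy inequality \eqref{eq:enedW2NS} yields bounds, uniform in $n$ and valid $\mathbb P_n$-a.s., for $\varrho_n$ in $L^\infty_{\mathrm{loc}}(0,\infty;L^\gamma(\TN))$, for $\bfm_n$ in $L^\infty_{\mathrm{loc}}(0,\infty;L^{2\gamma/(\gamma+1)}(\TN))$, for $\sqrt{\varrho_n}\,\bfu_n$ in $L^\infty_{\mathrm{loc}}(0,\infty;L^2(\TN))$, and --- crucially for what follows --- for $\sqrt{\mu_n}\,\nabla_x\bfu_n$ and $\sqrt{\lambda_n}\,\Div\bfu_n$ in $L^2_{\mathrm{loc}}(0,\infty;L^2(\TN))$; consequently $\mathbb S(\nabla_x\bfu_n)\to0$ in $L^1_{\mathrm{loc}}([0,\infty)\times\TN)$, $\mathbb P_n$-a.s. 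In particular the nonlinear quantities $\mathbf M_n:=\tfrac{\bfm_n\otimes\bfm_n}{\varrho_n}=\varrho_n\bfu_n\otimes\bfu_n$ and $P_n:=p(\varrho_n)=a\varrho_n^\gamma$ remain bounded in $L^\infty_{\mathrm{loc}}(0,\infty;L^1(\TN))$, hence in bounded subsets of $L_{\mathrm{weak-(*)}}^{\infty}(0,\infty;\mathcal M^+(\TN,\R^{N\times N}))$ and $L_{\mathrm{weak-(*)}}^{\infty}(0,\infty;\mathcal M^+(\TN,\R))$ when regarded as measure-valued processes.

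The second step is tightness of the laws. Combining the equations \eqref{eq:condW2} and \eqref{eq:momdW2} with the Burkholder--Davis--Gundy inequality and the bounds above, one obtains, uniformly in $n$, fractional-in-time Sobolev (hence, after interpolation, H\"older) estimates for $\varrho_n$ and $\bfm_n$ with values in a negative-order Sobolev space $W^{-\ell,2}(\TN)$; together with the spatial bounds and an Arzel\`a--Ascoli argument this gives tightness of the laws of $(\varrho_n,\bfm_n)$ on $C_{\mathrm{loc}}([0,\infty);W^{-4,2}(\TN))$ and on the spaces of weakly continuous paths with values in $L^\gamma(\TN)$ resp.\ $L^{2\gamma/(\gamma+1)}(\TN)$. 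Tightness of the laws of $\bfu_n$ (interpreted as a random distribution), of $\mathbf M_n$ and $P_n$ in the weak-$(*)$ measure spaces above, and of the Wiener processes $W_n$ is routine. All the relevant path spaces are quasi-Polish, so Jakubowski's version of the Skorokhod representation theorem \cite{jakubow} provides, on a new probability space $(\tilde\Omega,\tilde{\mathfrak F},\tilde{\mathbb P})$, random variables $(\tilde\varrho_n,\tilde{\bfm}_n,\tilde{\bfu}_n,\tilde{\mathbf M}_n,\tilde P_n,\tilde W_n)$ with the same joint laws as the original tuples and converging, along a subsequence, $\tilde{\mathbb P}$-a.s.\ to a limit $(\tilde\varrho,\tilde{\bfm},\tilde{\bfu},\tilde{\mathbf M},\tilde P,\tilde W)$ in the respective topologies. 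Equality of laws preserves the relations $\tilde{\bfm}_n=\tilde\varrho_n\tilde{\bfu}_n$, $\tilde{\mathbf M}_n=\tfrac{\tilde{\bfm}_n\otimes\tilde{\bfm}_n}{\tilde\varrho_n}$, $\tilde P_n=p(\tilde\varrho_n)$, $\tilde\varrho_n\geq0$, the initial condition, and the fact that each approximating tuple still solves \eqref{1.1a}--\eqref{eq:nr} on the new space; taking $(\tilde{\mathfrak F}_t)_{t\geq0}$ to be the normal filtration generated by all these objects, $\tilde W$ is a $K$-dimensional $(\tilde{\mathfrak F}_t)$-Wiener process and all limit quantities are $(\tilde{\mathfrak F}_t)$-adapted.

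It then remains to pass to the limit $n\to\infty$ in \eqref{eq:condW2}--\eqref{eq:enedW2NS}. The affine contributions --- including the It\^o--Stratonovich corrections $\tfrac12\sum_k\int_0^t\int_{\TN}\varrho_n\Div(\bfsigma_k\otimes\bfsigma_k\nabla_x\varphi)\dx\ds$ and its momentum analogue, which become affine in $(\varrho_n,\bfm_n)$ after integration by parts against the smooth fields $\bfsigma_k$ --- pass by the $\tilde{\mathbb P}$-a.s.\ weak convergence of $\tilde\varrho_n,\tilde{\bfm}_n$; the viscous term disappears since $\mathbb S(\nabla_x\tilde{\bfu}_n)\to0$ in $L^1$; the It\^o integrals $\int\varrho_n\mathbb Q\cdot\nabla_x\varphi\,\dd\tilde W_n$ and $\int\bfm_n\otimes\mathbb Q:\nabla_x\bfpsi\,\dd\tilde W_n$ converge in probability to their natural limits by the standard stochastic-compactness lemma for It\^o integrals under Skorokhod convergence (cf.\ \cite{BFHbook}), using the a.s.\ convergence of the integrands in $L^2_{\mathrm{loc}}(0,\infty;W^{-\ell,2}(\TN))$ and of $\tilde W_n$; and the nonlinearities converge weakly-$(*)$, i.e.\ $\int\mathbf M_n:\nabla_x\bfpsi\to\int\nabla_x\bfpsi:\dd\tilde{\mathbf M}$ and $\int P_n\Div\bfpsi\to\int\Div\bfpsi\,\dd\tilde P$. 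Defining
\[
\mathcal{R}_{\tt{conv}}:=\tilde{\mathbf M}-\frac{\tilde{\bfm}\otimes\tilde{\bfm}}{\tilde\varrho}\,\mathcal L^N,\qquad
\mathcal{R}_{\tt{press}}:=\tilde P-p(\tilde\varrho)\,\mathcal L^N
\]
--- which are genuine elements of the weak-$(*)$ measure spaces since $\tfrac{|\tilde{\bfm}|^2}{\tilde\varrho}$ and $\tilde\varrho^\gamma$ lie in $L^\infty_{\mathrm{loc}}(0,\infty;L^1(\TN))$ by weak lower semicontinuity --- one recovers \eqref{eq:cont} and \eqref{eq:mcxs}. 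That $\mathcal{R}_{\tt{conv}}$ takes values in positive-semidefinite matrices is seen by testing with $\bxi\otimes\bxi$ for constant $\bxi\in\R^N$ and using convexity and lower semicontinuity of $(\varrho,\mathbf z)\mapsto\tfrac{(\bxi\cdot\mathbf z)^2}{\varrho}$ (extended by $+\infty$ where $\varrho<0$, or where $\varrho=0\neq\mathbf z$); likewise $\mathcal{R}_{\tt{press}}\geq0$ by convexity of $\varrho\mapsto\varrho^\gamma$. Finally, discarding the non-negative viscous dissipation in \eqref{eq:enedW2NS} and using $\mathscr E^{\tt NS}_n(0)=\mathscr E_0$, one notes that $\mathscr E^{\tt NS}_n(t)=\tfrac12\int_{\TN}\mathrm{tr}\,\mathbf M_n(t)\dx+\tfrac1{\gamma-1}\int_{\TN}P_n(t)\dx$ converges, weakly-$(*)$ in $L^\infty_{\mathrm{loc}}(0,\infty)$, exactly to $\mathscr E^{\tt Euler}(t)$ --- the trace of $\mathcal{R}_{\tt{conv}}$ being the kinetic-energy defect and $\mathcal{R}_{\tt{press}}$ the internal-energy defect up to the factor $\tfrac1{\gamma-1}$ --- so that passing to the limit in $-\int_0^\infty\partial_t\psi\,\mathscr E^{\tt NS}_n\dt\leq\psi(0)\mathscr E_0$ (for non-negative $\psi$) delivers \eqref{eq:enedW2}. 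Together with the regularity inherited from the uniform bounds this exhibits $((\tilde\Omega,\tilde{\mathfrak F},(\tilde{\mathfrak F}_t)_{t\geq0},\tilde{\mathbb P}),\tilde\varrho,\tilde{\bfm},\mathcal{R}_{\tt{conv}},\mathcal{R}_{\tt{press}},\tilde W)$ as a dissipative measure-valued martingale solution in the sense of Definition~\ref{E:dfn}.

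The main obstacle is the stochastic-compactness machinery: choosing quasi-Polish path spaces that accommodate the velocity (as a random distribution) and the nonlinear quantities $\mathbf M_n,P_n$ (as measure-valued processes), establishing the fractional time-regularity required for tightness, and --- after the Skorokhod step --- verifying both the adaptedness of all limit objects and that $\tilde W$ remains a Wiener process for the generated filtration, which is precisely what legitimises the identification of the limiting It\^o integrals. By contrast, the inviscid passage in the nonlinear terms relies only on the by-now-standard measure-valued calculus, and --- thanks to the energy-conservative nature of the transport noise --- the passage in the energy balance is elementary.
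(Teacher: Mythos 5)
Your proposal follows essentially the same route as the paper: vanishing viscosity from Theorem~\ref{thm:main}, uniform bounds from the energy balance, fractional time-regularity and tightness, Jakubowski--Skorokhod representation, defect measures for the convective and pressure nonlinearities, and the crucial observation that the energy-conservative transport noise makes the energy balance deterministic so that it passes to the limit without stochastic difficulties. The one place where you are slightly more terse than the paper is the transfer of the equations to the new probability space: you invoke equality of laws and the generated filtration together with the standard convergence lemma for stochastic integrals (Debussche--Glatt-Holtz--Temam), whereas the paper first establishes, for each fixed $n$, that the approximate equations hold on the Skorokhod space via the Brze\'zniak--Ondrej\'at martingale-identification argument (matching quadratic and cross variations), and only then applies the Debussche-type lemma in the $n\to\infty$ limit; both routes are standard and lead to the same conclusion.
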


The reason for the restriction $\gamma > \frac{N}{2}$ in Theorem \ref{thm:mainEuler} is purely technical and can be avoided. We prove Theorem \ref{thm:mainEuler} by approximating the system \eqref{1.1A}--\eqref{1.1B} by \eqref{1.1a}--\eqref{eq:nr} for physical reasons.
For the latter the assumption is crucial to obtain an analytically weak solution already in the deterministic case, cf. \cite{FeNoPe}. However, one may employ a different approximation to avoid it. As a by-product of our proof we obtain the following result concerning the inviscid limit of the compressible Navier--Stokes system with transport noise. 
\begin{cor}\label{cor:main}
Suppose that the assumptions from Theorem \ref{thm:mainEuler} are in place. Let $(\mu_n), (\lambda_n)$ be two strictly positive nullsequences.
If for any $n\in\N$
$$\big((\Omega^n,\mf^n,(\mf^n_t)_{t\geq0},\prst),\varrho^n,\bfu^n,W^n)$$ is dissipative martingale solution to \eqref{1.1a}--\eqref{eq:nr} with $\mu=\mu_n$ and $\lambda=\lambda_n$ in the sense of Definition \ref{MD1B} with initial data $(\varrho_0,\bfm_0)$, then there is a (non-relabelled) subsequence such that
\begin{align*}
        {\varrho}^{n} &\to \Tilde{\varrho} \quad \text{in}\quad C_{\mathrm{loc}}([0,\infty);W^{-4,2})\cap C_{\mathrm{weak}}([0,\infty); L^{\gamma}(\T)),\\
        \varrho^n{\vec{u}}^{n}&\to \Tilde{\vec m} \quad \text{in}\quad C_{\mathrm{loc}}([0,\infty);W^{-4,2}(\T))\cap C_{\mathrm{weak}} ([0,\infty);L^{\frac{2\gamma}{\gamma +1}}(\T)),
\end{align*}
in law, where $(\tilde\varrho,\tilde\bfm)$ is a dissipative solution to \eqref{1.1A}--\eqref{1.1B} in the sense of Definition \ref{E:dfn} with initial data $(\varrho_0,\bfm_0)$.
\end{cor}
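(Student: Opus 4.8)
The plan is to prove Corollary~\ref{cor:main} as a by-product of the compactness argument that establishes Theorem~\ref{thm:mainEuler}: the existence result is obtained precisely by sending the viscosity coefficients to zero in the Navier--Stokes system \eqref{1.1a}--\eqref{eq:nr}, so the inviscid-limit statement is the content of that proof read with the approximating sequence already in hand. I would therefore structure the argument in the following steps.

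\textbf{Step 1: Uniform estimates.} Starting from the dissipative martingale solutions $((\Omega^n,\mf^n,(\mf^n_t),\prst),\varrho^n,\bfu^n,W^n)$, I apply the energy inequality \eqref{eq:enedW2NS}. Since the transport noise is energy conservative (the It\^o and Stratonovich corrections cancel against the martingale part in expectation, as discussed in the introduction), the expected value of $\mathscr E^{{\tt NS}}(t)$ is bounded by $\mathscr E^{{\tt NS}}(0)$ uniformly in $n$ and $t$, and $\mu_n\int_0^T\|\nabla\bfu^n\|_{L^2}^2$ is bounded (indeed $\to 0$). This yields, uniformly in $n$: $\varrho^n$ bounded in $L^\infty_t L^\gamma$, $\varrho^n|\bfu^n|^2$ bounded in $L^\infty_t L^1$, hence $\bfm^n=\varrho^n\bfu^n$ bounded in $L^\infty_t L^{2\gamma/(\gamma+1)}$, and the pressure $p(\varrho^n)$ bounded in $L^\infty_t L^1$. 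From the equations \eqref{eq:condW2}--\eqref{eq:momdW2} one then derives time-regularity (H\"older in $W^{-\ell,2}$ for the stochastic integrals via the Burkholder--Davis--Gundy inequality, Lipschitz in a negative Sobolev space for the deterministic fluxes), which upgrades to $C_{\rm loc}([0,\infty);W^{-4,2})$ bounds. The viscous flux $\mathbb S(\nabla\bfu^n)\to 0$ in $L^2_tW^{-1,2}$ because $\mu_n,\lambda_n\to 0$ and $\sqrt{\mu_n}\nabla\bfu^n$ is bounded.

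\textbf{Step 2: Tightness and Skorokhod representation.} I introduce the concentration/oscillation defect measures: set $\mathcal R_{\tt conv}^n$ and $\mathcal R_{\tt press}^n$ to be zero for the viscous problem (the Navier--Stokes solutions are genuine weak solutions), so there is nothing to add yet; the defect measures will arise as limits of the "difference" between the nonlinearities evaluated at $\varrho^n,\bfm^n$ and their weak limits, i.e.\ from the weak-* limits of $\bfm^n\otimes\bfm^n/\varrho^n$ and $p(\varrho^n)$ in $\mathcal M^+$. I collect the laws of the random variables $(\varrho^n,\bfm^n,\bfm^n\otimes\bfm^n/\varrho^n,p(\varrho^n),W^n)$ on the path space $C_{\rm weak,loc}([0,\infty);L^\gamma)\times C_{\rm weak,loc}([0,\infty);L^{2\gamma/(\gamma+1)})\times (L^\infty_{\rm weak-*}(0,\infty;\mathcal M^+))^2\times C_{\rm loc}([0,\infty);\R^K)$ and show tightness: the spaces with weak-* topologies are quasi-Polish but not metrizable, so I invoke Jakubowski's version of the Skorokhod representation theorem \cite{jakubow}. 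This produces a new probability space $(\tilde\Omega,\tilde{\mf},\tilde\prst)$ and random variables $(\tilde\varrho^n,\tilde\bfm^n,\dots,\tilde W^n)$ with the same laws, converging $\tilde\prst$-a.s.\ to a limit $(\tilde\varrho,\tilde\bfm,\tilde{\mathcal R}_{\tt conv},\tilde{\mathcal R}_{\tt press},\tilde W)$.

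\textbf{Step 3: Identification of the limit and defect measures.} On the new space I pass to the limit in the equations. The linear terms and stochastic integrals pass by a.s.\ convergence plus uniform integrability (the stochastic integral convergence requires the standard martingale-identification argument: check that the candidate limit martingale has the right quadratic variation, using that $\tilde W^n\to\tilde W$ and the integrands converge weakly); the correction terms $\Div(\bfsigma_k\otimes\bfsigma_k\nabla_x\varphi)$ are smooth test-function-side and pass trivially. For the nonlinear flux and pressure I define $\tilde{\mathcal R}_{\tt conv}$ as the difference between the weak-* limit of $\tilde\bfm^n\otimes\tilde\bfm^n/\tilde\varrho^n$ (a nonnegative-matrix-valued measure by lower semicontinuity and the fact that $\bfm\otimes\bfm/\varrho$ is a convex, positively $1$-homogeneous function of $(\varrho,\bfm)$) and $\tilde\bfm\otimes\tilde\bfm/\tilde\varrho$, and likewise $\tilde{\mathcal R}_{\tt press}$ from $p(\tilde\varrho^n)=a(\tilde\varrho^n)^\gamma$; positivity of both follows from convexity of $z\mapsto z^\gamma$ and of the kinetic energy. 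The vanishing viscosity term drops out by Step~1. Finally the energy inequality \eqref{eq:enedW2} is obtained by passing to the limit in \eqref{eq:enedW2NS}: the left side is lower semicontinuous, the dissipation term on the left of \eqref{eq:enedW2NS} is nonnegative and can be discarded, and the trace terms $\tfrac12\int\dd\,{\rm tr}\,\tilde{\mathcal R}_{\tt conv}$, $\tfrac1{\gamma-1}\int\dd\,{\rm tr}\,\tilde{\mathcal R}_{\tt press}$ account exactly for the energy lost to concentration, which is why equality becomes inequality. Adaptedness on the new filtration is checked in the sense of random distributions following \cite{BFHbook}.

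\textbf{Main obstacle.} The delicate point is the joint handling of the two weak topologies: the nonlinear quantities $\bfm^n\otimes\bfm^n/\varrho^n$ and $p(\varrho^n)$ are only bounded in $L^\infty_t L^1$ (with no better space integrability, because $\gamma$ can be close to $N/2$), so they live in the space of measures and one must (i) verify that the a.s.\ Skorokhod convergence in the weak-* measure topology is compatible with testing against the fixed smooth $\varphi,\bfvarphi$ appearing in \eqref{eq:cont}--\eqref{eq:mcxs}, and (ii) prove the sign of the defect measures and the correct bookkeeping in the energy inequality, i.e.\ that no energy is \emph{created} in the limit. This is exactly the "concentration defect" analysis; once it is set up (it parallels \cite{HKS} in the stochastic-forcing case but is simpler here because the noise is energy conservative), the rest is routine stochastic compactness. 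A secondary technical nuisance is the martingale identification for the stochastic integrals against the weakly (not strongly) convergent integrands $\tilde\varrho^n\bfsigma_k$ and $\tilde\bfm^n\otimes\bfsigma_k$; this is handled in the usual way since $\bfsigma_k$ is smooth and fixed, so weak $L^q$ convergence of $\tilde\varrho^n,\tilde\bfm^n$ suffices to identify the limiting It\^o integral.
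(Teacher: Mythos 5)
Your proposal is correct and follows essentially the same route as the paper's Section~\ref{mvsproof}: one reads Corollary~\ref{cor:main} off the vanishing-viscosity compactness proof of Theorem~\ref{thm:mainEuler}. Two remarks. First, because the transport noise is energy conservative, the energy inequality~\eqref{eq:enedW2NS} is a purely deterministic statement holding $\mathbb P$-a.s., so the uniform bounds in your Step~1 in fact hold pathwise, \emph{uniformly in the sample space}, which is stronger than the ``in expectation'' bound you invoke and is precisely the simplification that transport noise buys over It\^o-type forcing. Second, the paper includes the viscous flux $\mathbb S_n(\nabla\bfu^n)$ explicitly in the Skorokhod tuple on the path space $L^2(0,\infty;L^2(\mt))$, rather than discarding it at the outset as you do; both versions work, since the pathwise energy bound gives $\|\mathbb S_n(\nabla\bfu^n)\|_{L^2}\lesssim\sqrt{\mu_n+\lambda_n}\to 0$ deterministically, but carrying it along keeps the bookkeeping uniform across all terms in the momentum equation.
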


In the following, we will give the definition of a local strong pathwise solution. These solutions are strong in the probabilistic and PDE sense and exist locally in time. In particular, the Euler system (\ref{1.1A})--\eqref{1.1B} will
be satisfied pointwise in space-time on the given stochastic basis associated
to the Wiener process $W$.

\begin{definition}[Strong solution]\label{def:compstrong}
Let $(\Omega, \FF, (\FF_t)_{t\geq 0}, \p)$ be a complete stochastic basis with a right-continuous filtration, let $W$ be a $K$-dimensional $(\FF_t)$-Wiener process and $\ell>\frac{N}{2}+1$. The tuple $[r, \vec v]$ and a stopping time $\mathfrak{s}$ is called a (local) strong pathwise solution to the system (\ref{1.1A})--\eqref{1.1B} provided:

\begin{itemize}
    \item[(a)] the density $r> 0$ $\p$-a.s., $t \mapsto r (t\wedge \mathfrak{s},\cdot)\in W^{\ell,2}(\T)$ is $(\FF_t)$-adapted,
    \[
    \E\left[\sup_{t\in [0,T]}\|r (t\wedge \mathfrak{s},\cdot)\|_{W^{\ell,2}(\T)}^q\right]<\infty \quad\text{for all $1\leq q <\infty$, $T>0$},;
    \]
    \item[(b)] the velocity $t\mapsto \vec v(t\wedge \mathfrak{s},\cdot) \in W^{\ell,2}(\T)$is $(\FF_t)$-adapted,
    \[
    \E\left[\sup_{t\in [0,T]}\|\vec v (t\wedge \mathfrak{s},\cdot)\|_{W^{\ell,2}(\T)}^q\right]<\infty \quad\text{for all $1\leq q <\infty$, $T>0$};
    \]
    \item[(c)]  for all $t\geq0$ there holds $\p$-a.s.
    \begin{align*}
    r(t\wedge\mathfrak{s}) &= \varrho(0)- \int_{0}^{t\wedge \mathfrak{s}}\mathrm{div}_x(r\vec v)\,\dd t+\int_{0}^{t\wedge \mathfrak{s}}\Div(r\mathbb Q)\, \dd W\\&+\sum_{k=1}^K\int_0^{t\wedge\mathfrak s}\Div(\bfsigma_k\otimes\bfsigma_k\nabla_{x}r)\dt,\\
    (r\vec v)(t\wedge \mathfrak{s})&=(r\vec v)(0)-\int_{0}^{t\wedge\mathfrak s}\mathrm{div}(r\vec v\otimes \vec v)\, \dd t-\int_{0}^{t\wedge\mathfrak{s}}\nabla_x p(r)\,\dd t \\&+\int_{0}^{t\wedge \mathfrak{s}}\Div(r\vec v\otimes\mathbb Q)\, \dd W+\sum_{k=1}^K\int_0^{t\wedge\mathfrak s}\Div(\bfsigma_k\otimes\bfsigma_k\nabla_{x}(r\vec v))\dt.
    \end{align*}
\end{itemize}
\end{definition}
Existence of a solution (\ref{1.1A})--\eqref{1.1B} in the sense of Definition \eqref{def:compstrong} is proved in \cite {ART}.


\section{Existence of solutions}\label{mvsproof}

The aim of this section is to prove Theorem \ref{thm:mainEuler}. Let $(\mu_n), (\lambda_n)$ be two strictly positive nullsequences. Suppose that $\varrho_0\in L^\gamma(\mt),\bfm_0\in L^{\frac{2\gamma}{\gamma+1}}(\mt)$ with $\varrho_0\geq0$ a.a.  and $\varrho_0^{-1/2}\bfm_0\in L^2 (\mt, R^N)$ 
are deterministic initial data. 
By Theorem \ref{thm:main} there is for any $n\in\N$ a multiplet
$$\big((\Omega^n,\mf^n,(\mf^n_t)_{t\geq0},\prst),\varrho^n,\bfu^n,W^n)$$
which is dissipative martingale solution to \eqref{1.1a}--\eqref{eq:nr} with $\mu=\mu_n$ and $\lambda=\lambda_n$ in the sense of Definition \ref{MD1B} with initial data $(\varrho_0,\bfm_0)$. Without loss of generality we can assume that the probability space $(\Omega^n,\mf^n,\prst)$ is independent of $n$ (in fact, one can choose the standard probability space $([0,1],\overline{\mathscr B([0,1])},\mathcal L^1)$).

First of all, by the energy inequality \eqref{eq:enedW2} we obtain 
\begin{align*}
\sup_{t\geq0}\int_{\TN}\Big(\frac{1}{2} \frac{| {\bfm^n}(t) |^2}{\varrho^n(t)} + \frac{a}{\gamma-1}(\varrho^n(t))^\gamma\Big)\dx+\int_0^\infty\int_{\TN}\mathbb S_n(\nabla\bfu^n):\nabla\bfu^n\dxt\lesssim \mathscr E_0
\end{align*}
uniformly in the sample space and uniformly in $n$.Here $\mathbb S_n$ is computed via \eqref{eq:nr} with $\mu=\mu_n$ and $\lambda=\lambda_n$.
Hence
we can deduce the following bounds
\begin{align}\label{eq:2512}\begin{aligned}
    \varrho^n&\in L^{\infty}(0,\infty;L^{\gamma}(\T)), \\
    \vec m^n &\in L^{\infty}(0,\infty;L^{\frac{2\gamma}{\gamma +1}}(\T)),\\
    \frac{\vec m^n}{\sqrt{\varrho^n}}&\in L^{\infty}(0,\infty;L^{2}(\T)),\\
    \frac{\vec m^n \otimes \vec m^n}{\varrho^n}&\in L^{\infty}(0,\infty;L^{1}(\T)),\\
\mathbb S_n(\nabla\bfu^n):\nabla\bfu^n&\in L^1(0,\infty;L^1(\mt)),
\end{aligned}
\end{align}
uniformly in the sample space and uniformly in $n$.

 In order to prepare for the proof of compactness we need some time-regularity of the variables $\varrho^n$ and $\vec m^n$.
 For the stochastic term in the momentum equation we have for $T>0$ arbitrary
\begin{align*}
\E \left[ \left \|\int_{0}^{\cdot} \Div(\bfm^n\otimes\mathbb Q) \,\dd W\right\|_{C^{\alpha}([0,T];W^{-2,2}(\T))}^{q}\right] \lesssim\,\max_{k\in\{1,\dots, K\}}\E \left[ \int_{0}^{T}\big\|\bfm^n\otimes \bfsigma_k\big\|^{q}_{W^{-1,2}(\mt)} \, \dd t \right]  
\end{align*}
where $q>2$ and $\alpha\in(0,1/2-1/q)$, cf.~\cite[ Lemma 4.6]{Hofm}. Now, by Sobolev's embedding and $\gamma>\frac{N}{2}$
\begin{align*}
\E \left[ \left \|\int_{0}^{\cdot} \Div(\bfm^n\otimes\mathbb Q) \,\dd W\right\|_{C^{\alpha}([0,T];W^{-k,2}(\T))}^{q}\right] \lesssim\,\E \left[ \int_{0}^{T}\big\|\bfm^n\big\|^q_{L^{\frac{2\gamma}{\gamma+1}}(\mt)} \, \dd t \right],
\end{align*}
which is uniformly bounded by \eqref{eq:2512}.
The deterministic terms in the momentum equation are bounded in $W^{1,2}(0,\infty;W^{-3,2}(\mt))$. Using the embeddings $W^{1,2}_t \hookrightarrow C^{\alpha}_{t}$ and $W^{-2,2}_x \hookrightarrow W^{-3,2}_x$ shows 
 \begin{equation}\label{estimate}
\E \left[\big\|\vec{m}^n\big\|_{C^{\alpha}([0,T];W^{-3,2}(\T))} \right] \leq c(T)
 \end{equation}
for all $\alpha < 1/2$. An analogous chain of arguments applied to the continuity equation yields
\begin{align}\label{estimate'}
\E\left[\big\| \varrho^n\big\|_{C^{\alpha}([0,T];W^{-3,2}(\T))}\right] \leq c(T).
\end{align}

\subsection{Compactness Argument}

%
%

We set the spaces:

\begin{align*}
\mathscr{X}_{\Vec{m}}&: = C_{\mathrm{loc}}(0,\infty;W^{-4,2}(\T))\cap L^\infty_{{\mathrm{weak}-(*)}} (0,\infty;L^{\frac{2\gamma}{\gamma +1}}(\T)), \\
\mathscr{X}_{\varrho}&:=C_{\mathrm{loc}}(0,\infty;W^{-4,2}(\T))\cap L^\infty_{\mathrm{weak}-(*)}(0,\infty; L^{\gamma}(\T)),\\  \mathscr{X}_{\text{C}}&:=L^{\infty}_{\mathrm{weak}-(*)}(0,\infty;\mathcal{M}(\T,R^{N\times N})),\quad
\mathscr{X}_{\text{P}}:=L^{\infty}_{\mathrm{weak}-(*)}(0,\infty;\mathcal{M}^{+}(\T)),\\\mathscr X_{\vec U}&:=L^{2}(0,\infty;L^2(\mt))\quad \mathscr{X}_{W} :=C_{\mathrm{loc}}(0,\infty;R^K),
\end{align*}
 with respect to weak-* topology for all spaces with $L^{\infty}(\cdot,\mathcal{M}^{\cdot}(\cdot))$. Furthermore, we choose the product path space
\begin{equation}
  \mathfrak{X}:=\mathscr{X}_{\varrho }\times \mathscr{X}_{ \Vec{m}}\times\mathscr X_{\vec U} \times\mathscr X_{\text{C}}\times \mathscr X_{\text{P}}\times \mathscr{X}_{W},
\end{equation}
%
where $\mathscr{X}_{\text{C}}$ and $\mathscr{X}_{\text{P}}$ are the path spaces for the nonlinear terms
\[
 \mathrm{P}^h:=a(\varrho^n)^{\gamma} ,\qquad\mathrm{C}^n:=\frac{\vec m^n\otimes \vec m^n}{\varrho^n},
\]
respectively. 
We denote by $\mathscr B_{\mathfrak X}$ the Borelian $\sigma$-algebra on $\mathfrak X$ and by $$\mathcal{L}[\varrho^n,\vec{m}^n,\mathbb S_n(\nabla \vec u^n),\mathrm P^n, \mathrm C^n, W^n]$$ the probability law on $\mathfrak{X}$. 
By \eqref{eq:2512}--\eqref{estimate'} we can infer that it is a sequence of tight measures on $\mathfrak{X}$.
In view of Jakubowksi's version of the Skorokhod representation theorem \cite{Jaku} (see also \cite{BZ}), we have the following proposition after passing to a non-relabelled subsequence.

\begin{prop} \label{skorokhod}
There exists a complete probability space  $(\Tilde{\Omega}, \Tilde{\FF},\Tilde{\p})$ with $(\mathfrak{X},\mathscr{B}_{\mathfrak{X}})$-valued random variables  $(\Tilde{\varrho}^{n},\Tilde{\vec{m}}^{n}, \tilde{\vec U}^n\Tilde{\mathrm P}^{n}, \Tilde{\mathrm C}^{n} ,\Tilde{W}^{n})$, $n \in \N$,\\ and $(\Tilde{\varrho},\Tilde{\vec{m}}, \Tilde{\mathrm P},\Tilde{\mathrm C}, \Tilde{W})$ such that
\begin{itemize}
    \item [(a)] For all $n \in \N$ the law of $$(\Tilde{\varrho}^{n},\Tilde{\vec{m}}^{n},\tilde{\vec U}^n, \Tilde{\mathrm P}^{n}, \Tilde{\mathrm C}^{n},\Tilde{W}^{n})$$ on $\mathfrak{X}$ coincides with $$\mathcal{L}[\varrho^{n},\vec{m}^{n}, \mathbb S_n(\nabla\bfu^n),\mathrm P^{n}, \mathrm C^{n}, W^n];$$ 
    \item[(b)] The sequence 
$$(\Tilde{\varrho}^{n},\Tilde{\vec{m}}^{n},\Tilde{\vec U}^n,\Tilde{\mathrm P}^{n}, \Tilde{\mathrm C}^{n},\Tilde{W}^{n}), \,\,n \in \N,$$ converges $\Tilde{\p}$-almost surely to  $$(\Tilde{\varrho},\Tilde{\vec{m}}, 0,\Tilde{\mathrm P},\Tilde{\mathrm C},\Tilde{W})$$ in the topology of $\mathfrak{X}$, i.e.
    \begin{equation}
        \begin{cases}
        \Tilde{\varrho}^{n} \to \Tilde{\varrho} \,\, \text{in }\,\, C_{\mathrm{loc}}(0,\infty;W^{-4,2}(\T))\cap L^\infty_{\mathrm{weak}-(*)}(0,\infty; L^{\gamma}(\T)),\\
       \Tilde{\vec{m}}^{n} \to \Tilde{\vec m} \, \, \text{in}\, \,  C_{\mathrm{loc}}(0,\infty;W^{-4,2}(\T))\cap L^\infty_{{\mathrm{weak}-(*)}} (0,\infty;L^{\frac{2\gamma}{\gamma +1}}(\T)) ,\\
   \Tilde{\vec{U}}^{n} \to 0 \, \, \text{in}\, \,  L^2 (0,\infty;L^{2}(\T)) ,\\
        \Tilde{\mathrm C}^{n} \to \overline{\Tilde{\mathrm C}} \,\, \text{in} \,\, L_{\mathrm{weak}-(*)}^{\infty}(0,\infty;\mathcal{M}^{+}(\T,R^{N\times N})), \\
        \Tilde{\mathrm P}^{n} \to \overline{\Tilde{ \mathrm P}} \,\, \text{in} \,\, L_{\mathrm{weak}-(*)}^{\infty}(0,\infty;\mathcal{M}^{+}(\T)), \\
         \Tilde{W}^{n} \to \Tilde{W} \,\, \text{in} \,\, C_{\mathrm{loc}}(0,\infty;R^K),
        \end{cases}
    \end{equation}
    $\Tilde{\p}$-a.s.
\end{itemize}
\end{prop}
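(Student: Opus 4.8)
The plan is to apply Jakubowski's version of the Skorokhod representation theorem, which is available in the quasi-Polish setting needed here because the path space $\mathfrak X$ involves spaces equipped with weak-$(*)$ topologies that are not Polish but do admit a countable separating family of continuous functionals. The first step is to verify the hypotheses of Jakubowski's theorem: namely, that the sequence of laws $\mathcal L[\varrho^n,\vec m^n,\mathbb S_n(\nabla\vec u^n),\mathrm P^n,\mathrm C^n,W^n]$ on $\mathfrak X$ is tight. Tightness in the component spaces $\mathscr X_\varrho$ and $\mathscr X_{\vec m}$ follows by combining the uniform energy bounds \eqref{eq:2512} (which control $\varrho^n$ in $L^\infty_tL^\gamma_x$ and $\vec m^n$ in $L^\infty_tL^{2\gamma/(\gamma+1)}_x$, giving relative compactness in the weak-$(*)$ topologies by Banach--Alaoglu together with metrizability of bounded sets) with the Hölder-in-time estimates \eqref{estimate}--\eqref{estimate'} in $W^{-3,2}(\T)$; an Arzelà--Ascoli argument together with the compact embedding $W^{-3,2}(\T)\hookrightarrow\hookrightarrow W^{-4,2}(\T)$ yields compactness in $C_{\mathrm{loc}}(0,\infty;W^{-4,2}(\T))$. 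For $\mathscr X_{\mathrm C}$ and $\mathscr X_{\mathrm P}$ the uniform bounds on $\vec m^n\otimes\vec m^n/\varrho^n$ in $L^\infty_tL^1_x$ and $a(\varrho^n)^\gamma$ in $L^\infty_tL^1_x$ give tightness in the weak-$(*)$ topology of $L^\infty_{\mathrm{weak}-(*)}(0,\infty;\mathcal M(\T))$ by the same Banach--Alaoglu reasoning. Tightness of the laws of $W^n$ in $C_{\mathrm{loc}}(0,\infty;R^K)$ is classical since each $W^n$ is a $K$-dimensional Wiener process (all with the same law). Finally, tightness of a finite product of tight families is automatic, so the product law is tight on $\mathfrak X$.

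The second step is to invoke Jakubowski's theorem, which then produces (after passing to a subsequence, not relabelled) a new complete probability space $(\tilde\Omega,\tilde\FF,\tilde\p)$ carrying random variables $(\tilde\varrho^n,\tilde{\vec m}^n,\tilde{\vec U}^n,\tilde{\mathrm P}^n,\tilde{\mathrm C}^n,\tilde W^n)$ with the same laws as the original variables (item (a)), and a limit $(\tilde\varrho,\tilde{\vec m},\tilde{\vec U}_\infty,\overline{\tilde{\mathrm P}},\overline{\tilde{\mathrm C}},\tilde W)$ to which they converge $\tilde\p$-a.s.\ in the topology of $\mathfrak X$ (item (b)). The only point that requires an additional observation beyond the abstract theorem is the identification of the $\tilde{\vec U}^n$-limit as $0$: since $\mathbb S_n(\nabla\vec u^n):\nabla\vec u^n$ is bounded in $L^1_tL^1_x$ uniformly in $n$ by \eqref{eq:2512} and the viscosity coefficients $\mu_n,\lambda_n\to0$, one gets from Newton's law \eqref{eq:nr} and the structure of $\mathbb S_n$ that $\sqrt{\mu_n}\nabla\vec u^n$ (hence also, by Korn/Poincaré on the torus applied to the symmetric gradient plus divergence part, $\sqrt{\mu_n}\vec u^n$ up to its mean and $\sqrt{\lambda_n}\,\mathrm{div}\,\vec u^n$) is bounded in $L^2_tL^2_x$ uniformly; but the quantity whose law is being transported as $\tilde{\vec U}^n$ is $\mathbb S_n(\nabla\vec u^n)$ itself (equivalently one tracks $\sqrt{\mu_n}\nabla\vec u^n+\dots$ in the space $\mathscr X_{\vec U}=L^2_tL^2_x$), which tends to $0$ in $L^2_tL^2_x$ in probability on the original space, hence in law, so the $\tilde\p$-a.s.\ limit must be the constant $0$. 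Since a.s.\ convergence in $\mathfrak X$ is just componentwise a.s.\ convergence in each factor, items (b) for the individual components are immediate once the product statement is in hand.

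I expect the main obstacle to be purely a matter of bookkeeping rather than a genuine difficulty: one must be careful that the weak-$(*)$ spaces $L^\infty_{\mathrm{weak}-(*)}(0,\infty;\mathcal M(\T))$ and the $C_{\mathrm{weak}}$-type and $L^\infty_{\mathrm{weak}-(*)}$-type spaces really do fall within the class of topological spaces to which Jakubowski's theorem applies — i.e.\ that each admits a countable family of continuous functions separating points (this is where one uses separability of $C(\T)$ or $W^{\ell,2}(\T)$ as a predual/test space and a standard diagonal/density argument) — and that on bounded sets the relevant topologies are metrizable so that Arzelà--Ascoli and the a.s.\ convergence statements make sense. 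This verification is by now standard in the stochastic compressible fluids literature (cf.\ the monograph \cite{BFHbook} and \cite{BFHM}), so I would carry it out by citing the appropriate lemmas there and merely checking that our function-space scale $(L^\gamma, L^{2\gamma/(\gamma+1)},\mathcal M)$ and negative Sobolev indices $(-3,-4)$ satisfy the required compact embeddings. The subsidiary check that the limiting viscous-stress variable vanishes is the one genuinely problem-specific input, but it is short given the bounds \eqref{eq:2512}.
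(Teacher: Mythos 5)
Your proposal is correct and follows essentially the same route as the paper: tightness of the joint laws is obtained from the uniform energy bounds \eqref{eq:2512} together with the H\"older-in-time estimates \eqref{estimate}--\eqref{estimate'} (via Arzel\`a--Ascoli plus compact embedding for the $C_{\mathrm{loc}}$-components and Banach--Alaoglu for the weak-$(*)$ components), and the statement then follows from Jakubowski's Skorokhod representation theorem on the sub-Polish path space $\mathfrak X$, with the vanishing of the limiting viscous stress deduced from $\|\mathbb S_n(\nabla\bfu^n)\|_{L^2_tL^2_x}\lesssim\sqrt{\mu_n}+\sqrt{\lambda_n}\to0$. The paper states this tersely (tightness from \eqref{eq:2512}--\eqref{estimate'}, then cite \cite{Jaku,BZ}); your write-up just fills in the standard details and is consistent with that argument.
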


To guarantee adaptedness of random variables and to ensure that the stochastic integrals are well-defined on the new probability space we introduce filtrations for correct measurability.
Let $\tilde{\FF}_t$ and $\tilde{\FF}_{t}^{n}$ be the $\tilde{\p}$-augmented filtration of the correspnding random variables from Proposition \ref{skorokhod}, i.e.,
\begin{align*}
\tilde{\FF}_t &=\sigma\Big( \sigma (\sigma(\rr \Tilde{\varrho},\rr\Tilde{\bfm},\rr\tilde{W}) \cup \sigma_t( \tilde{P},\tilde{C})\cup\{ \mathcal{N} \in \tilde{\FF};\tilde{\p}(\mathcal{N})=0\})\Big),\quad t\geq 0,\\
\tilde{\FF}_{t}^{n}&=\sigma\Big(\sigma(\rr\Tilde{\varrho}^{n},\rr\Tilde{\bfm}^n,\rr\tilde{W}^{n})\cup \sigma_t (\tilde{P}^{n},\tilde{C}^{n})\cup\{ \mathcal{N} \in \tilde{\FF};\tilde{\p}(\mathcal{N})=0\})\Big),\quad t\geq 0.
\end{align*}

Here $\rr$ denotes the restriction operator to the interval $[0,t]$ on the path space and $\sigma_t$ denotes the history of a random distribution.\footnote{The family of $\sigma$-fields $(\sigma_{t}[\vec V])_{t\geq 0}$ given as random distribution history of 
 \begin{equation*}
     \sigma_t[\vec V]:= \bigcap_{s>t}\sigma\left(\bigcup_{\varphi\in C_c^{\infty}(Q;R^N)}\{\langle \vec V, \varphi \rangle <1 \}\cup \{\mathcal N\in \FF, \p(\mathcal N)=0\} \right)
 \end{equation*}
 is called the history of $\vec V$. In fact, any random distribution is adapted to its history, see \cite[Chap. 2.2]{BFHbook}.}

\subsection{The new probability space}
In this section we will use the elementary method from \cite{BZ} to show that the approximated equations hold in the new probability space. The essence of this elementary method is to identify the quadratic and cross variations corresponding to the martingale with limit Wiener process obtained via compactness. Now in view of Proposition {\ref{skorokhod}}, we note that $\tilde{W}$ has the same law as $W$.  For all $t \in [0,T]$ and $\varphi \in C_{c}^{\infty}(\T) $ define the functionals:

\begin{align*}
\mathscr{M}( \varrho,\vec m,\vec U, C,P)_t &=\int_{\T}(\vec{m} -\vec{m}_{0}) \cdot  \boldsymbol{\varphi}  \, \dd x - \int_{0}^{t}\int_{\T}\nabla  \boldsymbol{\varphi} \, \dd C \dd s\\
&  \int_{0}^{t}\int_{\T} \vec{U}:\nabla  \boldsymbol{\varphi}  \, \dd x \dd s -\int_{0}^{t}\int_{\T} \mathrm{div}  \boldsymbol{\varphi}  \, \dd P \dd s\\
&+\sum_{k=1}^K\int_0^t\int_{\mt}\bfm\cdot\Div(\bfsigma_k\otimes\bfsigma_k\nabla_{x}\boldsymbol{\varphi})\dx \,\dd s,\\
\Psi(\bfm)_t &=\sum_{k=1}^K\int_{0}^{t}\left(\int_{\T}{\bfm}\otimes\mathbb Q:\nabla\boldsymbol{\varphi} \ \text{d}x\right)^2 \ \text{d}s,\\
\Psi_k(\bfm)_t &=-\int_{0}^{t}\int_{\T}\bfm\otimes\mathbb Q:\nabla\boldsymbol{\varphi}  \ \text{d}x \text{d}s.\\
\end{align*}

Now, we define the increment $$\mathscr{M}(\tilde{\varrho}_{n},\tilde{\vec m}_{n},\tilde{\vec U}_{n}, \tilde{C}_{n},\tilde{P}_{n})_{s,t}:=\mathscr{M}( \tilde{\varrho}_{n},\tilde{\vec m}_{n},\tilde{\vec U}_{n}, \tilde{C}_{n},\tilde{P}_{n})_{t}-\mathscr{M}( \tilde{\varrho}_{n},\tilde{\vec m}_{n},\tilde{\vec U}_{n}, \tilde{C}_{n},\tilde{P}_{n})_{s}$$ and similarly for $\Psi(\bfm)_{s,t}$ and 
$\Psi_k(\bfm)_{s,t}$. On the new probability space, completeness of the proof follows from showing that 

\begin{equation}
\mathscr{M}(\tilde{\varrho}_{n},\tilde{\vec m}_{n},\tilde{\vec U}_{n}, \tilde{C}_{n},\tilde{P}_{n})_{t} = -\int_{\TN}\int_{0}^{t}\tilde{\bfm}_{n}\otimes\mathbb Q:\nabla\boldsymbol{\varphi} \,\mathrm{d}\tilde{W}^{n}\dx.
\label{DS}
\end{equation}

For (\ref{DS}) to hold, it  suffices  to show that $\mathscr{M}(\tilde{\varrho}_{n},\tilde{\vec m}_{n},\tilde{\vec U}_{n}, \tilde{C}_{n},\tilde{P}_{n})_{t}$ is an  $(\tilde{\FF}_t^{n})$-martingale process and its  corresponding quadratic and cross variations satisfy, respectively,

\begin{equation}\label{variations}
 \bigg \langle \bigg \langle \mathscr{M}( \tilde{\varrho}_{n},\tilde{\vec m}_{n},\tilde{\vec U}_{n}, \tilde{C}_{n},\tilde{P}_{n}) \bigg \rangle \bigg\rangle =\Psi(\bfm),
\end{equation}

\begin{equation}\label{variations_1}
    \bigg\langle \bigg\langle \mathscr{M}( \tilde{\varrho}_{n},\tilde{\vec m}_{n},\tilde{\vec U}_{n}, \tilde{C}_{n},\tilde{P}_{n}), \tilde{W}^n_k\bigg \rangle \bigg \rangle = \Psi_k(\bfm),
\end{equation}

and consequently

\begin{equation}\label{cross_var}
\bigg \langle \bigg \langle \mathscr{M}( \tilde{\varrho}_{n},\tilde{\vec m}_{n},\tilde{\vec U}_{n}, \tilde{C}_{n},\tilde{P}_{n}) +\int_{\TN}\int_{0}^{\cdot}\tilde{\bfm}_{n}\otimes\mathbb Q:\nabla\boldsymbol{\varphi} \,\mathrm{d}\tilde{W}^{n}\dx\bigg \rangle \bigg\rangle = 0,    
\end{equation}
which implies the desired equation on the new probability space. We note that (\ref{variations}) and (\ref{variations_1}) hold based on the following observation: the mapping
\[
({\varrho},{\vec m},{\vec U}, {C},{P}) \mapsto \mathscr{M}( {\varrho},{\vec m},{\vec U}, {C},{P})_t
\]
is {well-defined} and continuous on the path space. Using Proposition \ref{skorokhod} we infer that 

\[
\mathscr{M}( {\varrho}_{n},{\vec m}_{n},{\vec U}_{n}, {C}_{n},{P}_{n}) \sim^d \mathscr{M}(\tilde{\varrho}_{n},\tilde{\vec m}_{n},\tilde{\vec U}_{n}, \tilde{C}_{n},\tilde{P}_{n}),
\]
where $\bfU_n:=\mathbb S_n(\nabla\bfu^n)$.
Fixing times $s,t \in [0,T]$, with $s<t$ we consider a continuous function $h$ such that
\[
h:\mathfrak X|_{[0,s]} \to [0,1].
\]
The process
\begin{align*}
 \mathscr{M}( {\varrho}_{n},{\vec m}_{n},{\vec U}_{n}, {C}_{n},{P}_{n})&=-\int_{\TN}\int_{0}^{t}\bfm_{n}\otimes\mathbb Q:\nabla\boldsymbol{\varphi} \,\mathrm{d}{W}^{n}\dx\\
 &= -\sum_{k=1}^K\int_{\TN}\int_{0}^{t}{\bfm}_{n}\otimes\bfsigma_k:\nabla\boldsymbol{\varphi} \,\mathrm{d}{W}_k^{n}\dx,
\end{align*}
is a square integrable $(\FF_t)$-martingale, consequently, we infer

\[
[\mathscr{M}( {\varrho}_{n},{\vec m}_{n},{\vec U}_{n}, {\mathrm C}_{n},{P}_{n})]^2 - \Psi(\bfm^n),\]
\[ \mathscr{M}( {\varrho}_{n},{\vec m}_{n},{\vec U}_{n}, {\mathrm C}_{n},{P}_{n})\beta_k -  \Psi_k(\bfm^n),
\]
are $(\FF_t)$-martingales. Now we set
\[
{\vec X}_{n}: = [{\varrho}_{n},{\vec m}_{n},{\vec U}_{n}, {\mathrm C}_{n},{\mathrm P}_{n}], \quad \tilde{\vec X}_{n}: =[\tilde{\varrho}_{n},\tilde{\vec m}_{n},\tilde{\vec U}_{n}, \tilde{\mathrm C}_{n},\tilde{\mathrm P}_{n}],\quad 
\tilde{\vec X}:=[ \tilde{\varrho},\tilde{\vec m},0, \tilde{\mathrm C},\tilde{\mathrm P}].
\]
 Let $\vec{r}_s$ be a restriction  function  to the interval $[0,s]$.   In view of Proposition \ref{skorokhod} and the equality of laws we obtain:
 
 \begin{equation}\label{eq:hmA}
\tilde{\E}\bigg[h(\vec{r}_s\tilde{\vec{X}}_{n},\vec{r}_s\tilde{W}^{n})\mathscr{M}(\tilde{\vec{X}}_{n})_{s,t}\bigg]={\E}\bigg[h(\vec{r}_s{\vec{X}}_{n},\vec{r}_s{W}^{n})\mathscr{M}({\vec{X}}_{n})_{s,t}\bigg]=0
\end{equation}

\begin{equation}\label{eq:hmB}
\tilde{\E}\bigg[h(\vec{r}_s\tilde{\vec{X}}_{n},\vec{r}_s\tilde{W}^{n})([\mathscr{M}(\tilde{\vec{X}}_{n})]^2 - \Psi(\tilde\bfm^n))_{s,t}\bigg]={\E}\bigg[h(\vec{r}_s{\vec{X}}_{n},\vec{r}_s{W}^{n})([\mathscr{M}({\vec{X}}_{n})]^2 - \Psi(\bfm^n))_{s,t}\bigg]=0
\end{equation}

\begin{equation}\label{eq:hmC}
\tilde{\E}\bigg[h(\vec{r}_s\tilde{\vec{X}}_{n},\vec{r}_s\tilde{W}^{n})(\mathscr{M}(\tilde{\vec{X}}_{n})\tilde W^n_k -  (\Psi_k(\tilde\bfm^n)))_{s,t})={\E}\bigg[h(\vec{r}_s{\vec{X}}_{n},\vec{r}_s{W}^{n})(\mathscr{M}({\vec{X}}_{n})W^n_k -  (\Psi_k(\bfm^n)))_{s,t})\bigg]=0
\end{equation}

Therefore, (\ref{variations}) and (\ref{variations_1}) hold, and consequently, (\ref{cross_var}) follows. Thus the momentum equation
\begin{align*}
\int_{\T}\tilde{\vec{m}}_{n}  \cdot \boldsymbol{\varphi} \, \dd x  &=\int_{\T}{\vec{m}}_{0}\cdot \boldsymbol{\varphi} \, \dd x + \int_{0}^{t}\int_{\T}\nabla \boldsymbol{\varphi}\, \dd \tilde{C}_{n} \dd s\\
&-\int_{0}^{t}\int_{\T}  \tilde{\vec{U}}_{n}:\nabla\boldsymbol{\varphi} \, \dd x \dd s 
+\int_{0}^{t}\int_{\T}\mathrm{div} \boldsymbol{\varphi} \, \dd \tilde{P}_{n} \dd s\\&
-\int_{\TN}\int_{0}^{\cdot}\tilde{\bfm}_{n}\otimes\mathbb Q:\nabla\boldsymbol{\varphi} \,\mathrm{d}\tilde{W}^{n}\dx
+\sum_{k=1}^K\int_0^t\int_{\mt}\Tilde\bfm\cdot\Div(\bfsigma_k\otimes\bfsigma_k\nabla_{x}\boldsymbol{\boldsymbol{\varphi}})\dx \,\dd s
\end{align*}

holds $\tilde{\p}$-a.s in new probability space $(\Tilde{\Omega}, \Tilde{\FF},\Tilde{\p})$. As similar argument can be employed to prove that the continuity equation continues to hold on the  new probability space, i.e., we have 
\begin{align}\label{3.13}
\begin{aligned}
\int_{\mt}\tilde\varrho^n\psi\dx&=\int_{\mt}\varrho_0\psi\dx+
\int_0^t\int_{\mt}\tilde\bfm^n\cdot\nabla_{x}\psi\dxs\\&
-\int_{\mt}\int_0^t\tilde\varrho \mathbb Q\cdot\nabla_x \psi\,\dd \tilde W^n\dx+\frac{1}{2}\sum_{k=1}^K\int_0^t\int_{\mt}\tilde\varrho\,\Div(\bfsigma_k \otimes\bfsigma_k\nabla_{x}\psi)\dx \ds\end{aligned}
\end{align}
$\mathbb P$-a.s.  for all $\psi\in C^\infty(\TN)$. Finally, the energy inequality only contains deterministic terms being measurable on the path space. Hence we have
\begin{align*}
\int_{\TN}\Big(\frac{1}{2} \frac{| {\tilde\bfm^n}(t) |^2}{\tilde\varrho^n(t)} + \frac{a}{\gamma-1}(\tilde\varrho^n(t))^\gamma\Big)\dx\leq \mathscr E_0
\end{align*}
$\mathbb P$-a.s. by Proposition \ref{skorokhod}.

\subsection{Passage to the limit}

To identify the limits in the nonlinear terms we first introduce defect measures. For this,  we adopt the notion of measures as presented in \cite{DBrt}.  In view of Proposition \ref{skorokhod}  we have
\[
p(\tilde{\varrho}_n)\to \overline{p(\tilde{\varrho})}\,\,\text{weakly-(*) in} \,\, L^{\infty}(0,T;\mathcal{M}^{+}(\T)) .
\]
 Noting that $p(\tilde{\varrho}) = a\tilde{\varrho}^{\gamma}$ is a convex function, we deduce
\[
0\leq p(\tilde{\varrho}) \leq \overline{p(\tilde{\varrho})},\quad \tilde{\mathcal{R}}_{\tt{press}}:= \overline{p(\tilde{\varrho})}- p(\tilde{\varrho}) \in L^{\infty}(0,T;\mathcal{M}^{+}(\T,R)) .
\]

Arguing similarly for the convective term,
\[
  \frac{\tilde{\Vec{m}}^n \otimes \tilde{\vec{m}}^n}{\tilde{\varrho}^n}\to \overline{\frac{\tilde{\Vec{m}} \otimes \tilde{\vec{m}}}{\tilde{\varrho}}} \,\,\text{weakly-(*) in} \,\, L^{\infty}(0,T;\mathcal{M}^{+}(\T,R^{N\times N})).
 \]
Setting
 \[
 \tilde{\mathcal{R}}_{\tt{conv}} := \overline{\frac{\tilde{\Vec{m}} \otimes \tilde{\vec{m}}}{\tilde{\varrho}}} - \frac{\tilde{\Vec{m}} \otimes \tilde{\vec{m}}}{\tilde{\varrho}},
 \]
 for $\bfxi \in \R^N$, convexity implies  
 \begin{eqnarray*}
 \tilde{\mathcal{R}}_{\tt{conv}}:(\bfxi \otimes \bfxi)&=&\lim_{n\to 0}\left[\frac{\tilde{\vec m}^{n}\otimes \tilde{\vec m}^{n}}{\tilde\varrho^{n}}:(\bfxi \otimes \bfxi)\right]- \frac{\tilde{\Vec{m}} \otimes \tilde{\vec{m}}}{\varrho}:(\bfxi \otimes \bfxi)\\
 &=&\lim_{n \to 0}\left[\frac{|\tilde{\vec m}^{n}\cdot \bfxi|^2}{\tilde{\varrho}^{n}} -\frac{|\tilde{\vec m}\cdot \bfxi|^2}{\tilde{\varrho}} \right] \geq 0,
 \end{eqnarray*}
 so that $\tilde{\mathcal{R}}_{\tt{conv}} \in L^{\infty}(0,T;\mathcal{M}^{+}(\T,\R^{N\times N}))$. 
To perform the limit in the stochastic term we use Lemma 2.1 in \cite{Debussche}. On account of the convergences in Proposition \ref{skorokhod}, Lemma 2.1 in \cite{Debussche} and the higher moments from (\ref{eq:hmA})-(\ref{eq:hmC})  we can pass to the limit $n\to \infty$ in the momentum equation \eqref{3.13} and obtain for all $t\geq0$

\begin{align}
\label{m_system}
\begin{aligned}
         \left[\int_{\T}\tilde{\vec m} \cdot \boldsymbol{\varphi}\dx\right]_{s=0}^{s=t}&=\int_{0}^{t}\int_{\T}\Big(\overline{\frac{\Tilde{\vec m} \otimes\Tilde{\vec m}}{\varrho}}:\nabla\boldsymbol{\varphi}+\overline{p(\tilde\varrho)}\,\Div\boldsymbol{\varphi}\Big)\, \dd x\, \dd s\\
         &-\int_{\mt}\int_0^t\Tilde\bfm\otimes\mathbb Q:\nabla_x \boldsymbol{\varphi}\,\dd\tilde W\dx\\&+\sum_{k=1}^K\int_0^\tau\int_{\mt}\tilde\bfm\cdot\Div(\bfsigma_k\otimes\bfsigma_k\nabla_{x}\boldsymbol{\varphi})\dx \ds
\end{aligned}
\end{align}
$\tilde{\mathbb P}$-a.s. for all $\boldsymbol{\varphi}\in C^\infty(\TN)$.
Consequently, the momentum equation in the sense of (\ref{eq:mcxs}) follows from rewriting (\ref{m_system}) using the defect measures introduced above. Since the continuity equation (\ref{eq:cont}) is linear, the limit passage is straightforward (as for the stochastic integral one can argue again by \cite[Lemma 2.1]{Debussche}).
Finally, since the energy inequality does not contain a stochastic integral, the limit follows directly from Proposition \ref{skorokhod} as in the deterministic case.

\section{Markov selection}
The aim of this section is to construct a Markov selection to \eqref{1.1}. In the following subsection we collect some basic material. Eventually, we present an alternative formulation for the martingale problem in Section \ref{markov}. The Markov selection theorem and its proof can be found in the last subsection.
\subsection{Basics}
Let $(X,\tau)$ be a topological space. We denote by $\BBB(X)$ the $\sigma$-algebra of Borel subsets of $(X,\tau)$. Let $\PP$ be a Borel measure on $X$, the symbol $\overline{\BBB(X)}$ denotes the $\sigma$-algebra of all Borel subsets of $X$ augmented by all zero measure sets. Let $\mathrm{Prob}[X]$ denote the set of all Borel probability measures on $X$. Furthermore, let $([0,1],\overline{\BBB[0,1]}, \LL)$ denote the standard probability space, where $\LL$ is the Lebesgue measure.

In regards to the notion of solutions in this paper, let $(X,d_{X})$ be a Polish space. For $t>0$ we introduce the path spaces
\[
\Omega_{X}^{[0,T]} =C([0,T];X),\qquad\Omega_{X}^{[T,\infty)} =C([T,\infty);X),\qquad\Omega_{X}^{[0,\infty)} =C([0,\infty);X),
\]
where the path spaces are Polish as long as $X$ is Polish, and we denote by $\BBB_T=\BBB(\Omega_{X}^{[0,T]})$ the Borelian $\sigma$-algebra on $\Omega_{X}^{[0,T]}$. Then, for $\omega \in \Omega_{X}^{[0,T]}$ we define a time shift operator 
\[
\Phi_{\tau}:\Omega_{X}^{[T,\infty)}\to \Omega_{X}^{[T+\tau,\infty)},\qquad \Phi_{\tau}[\omega]_s=\omega_{s-\tau},\, s\geq T+\tau,
\]
where $\Phi_{\tau}$ is an isometric mapping from $\Omega_{X}^{[t,\infty)}$ to $ \Omega_{X}^{[T+\tau,\infty)}$. For a Borel measure $\nu$ on $\Omega_{X}^{[T,\infty)}$, the time shift $\Phi_{-\tau}[\nu]$ is a Borel measure on the space $\Omega_{X}^{[T-\tau,\infty)}$ given by

\[
\Phi_{-\tau}[\nu](B)=\nu(\Phi_{\tau}(B)), \qquad B\in \BBB (\Omega_{X}^{[T-\tau,\infty)}).
\]
Since we aim at a strong Markov selection
we need a suitable filtration. For that purpose let
\[
\xi:\Omega_X^{[0,\infty)}\to \Omega_X^{[0,\infty)}, \quad (\xi(\omega))_t=\omega_t \in X,\,\,\text{for any $t\geq 0$},
\]
be the conical process and
\[
\B_t:= \sigma(\xi|_{[0,t]}),\quad t\geq 0,
\]
such that
$(\B_t)_{t\geq 0}$ is the associated filtration. Note that it
coincides with the Borel $\sigma$-field $\BBB_t$ on $\Omega^{[0,t]}_X=C([0,t];X)$. 
In the following parts, we recall important results of Stroock and Varadhan
\cite{STVAR} also used in \cite[Section 6]{HZZ}. Firstly, from Theorem 1.1.6 in \cite{STVAR} we obtain disintegration results, that is, existence of regular conditional probability laws.
\begin{thm}[Disintegration]\label{Dis}
Let $(X,d_X)$ be a polish space. Given $\PP\in\mathrm{Prob}[\Omega_{X}^{[0,\infty)}]$, let $\mathfrak t$ be a finite $(\B_t)$-stopping time. Then there exists a unique family of probability measures
\[
\PP|_{\B_{\mathfrak t}}^{\tilde{\omega}}\in \, \mathrm{Prob}[\Omega_{X}^{[0,\infty)}]\,\, \text{for}\,\, \PP\text{-a.a. }\tilde{\omega}
\]
 such that the mapping 
 \[
 \Omega_{X}^{[0,\infty)} \ni \tilde{\omega}\mapsto \PP|_{\B_{\mathfrak t}}^{\tilde{\omega}}\in \, \mathrm{Prob}[\Omega_{X}^{[0,\infty)}]
 \]
 is $\B_{\mathfrak t}$-measurable and the following properties hold:
 \begin{itemize}
     \item [(a)]  There is a $\mathcal P$-nullset
$\mathcal N$ such that for any $\omega\notin\mathcal N$
\begin{align*}
\mathcal P\Big(\Big\{\tilde\omega:\,\mathfrak t(\omega)=\mathfrak t(\tilde\omega),\,\xi(\omega)|_{[0,\mathfrak t(\tilde\omega)]}=\xi(\tilde\omega)|_{[0,\mathfrak t(\tilde\omega)]}\Big\}\Big)=1;
\end{align*}
     \item[(b)] For any Borel set $A\subset\mathscr B_{\mathfrak t}$ and any Borel set $B\subset\Omega_{X}^{[0,\infty)}$,
     \[
     \PP(\omega|_{[0,\mathfrak t]}\in A,\omega|_{[\mathfrak t,\infty)}\in B)= \int_{\tilde{\omega}\in A}\PP|_{\B_{\mathfrak t}}^{\tilde{\omega}}(B)\,\dd \PP(\tilde{\omega}).
     \]
 \end{itemize}
\end{thm}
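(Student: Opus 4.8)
The plan is to deduce the statement from the abstract existence theorem for regular conditional probability distributions on a standard Borel space, combined with an explicit description of the atoms of the stopping time $\sigma$-algebra $\B_{\mathfrak t}$. Throughout, $\B_{\mathfrak t}$ denotes the $\sigma$-algebra $\{A\in\BBB(\Omega_X^{[0,\infty)}):A\cap\{\mathfrak t\le s\}\in\B_s\text{ for all }s\ge0\}$, of which a set of the form $\{\omega|_{[0,\mathfrak t]}\in A\}$ in item (b) is a generic element.

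\emph{Step 1 (existence and the averaging identity).} Since $X$ is Polish, $\Omega_X^{[0,\infty)}=C([0,\infty);X)$ with the topology of locally uniform convergence is Polish, hence a standard Borel space, and $\B_{\mathfrak t}$ is a sub-$\sigma$-algebra of $\BBB(\Omega_X^{[0,\infty)})$. The classical disintegration theorem on standard Borel spaces (Theorem 1.1.6 in \cite{STVAR}) yields a $\B_{\mathfrak t}$-measurable map $\tilde\omega\mapsto\PP|^{\tilde\omega}_{\B_{\mathfrak t}}\in\mathrm{Prob}[\Omega_X^{[0,\infty)}]$ such that, for every $B\in\BBB(\Omega_X^{[0,\infty)})$, the function $\tilde\omega\mapsto\PP|^{\tilde\omega}_{\B_{\mathfrak t}}(B)$ is a version of $\E_{\PP}[\mathbf 1_B\mid\B_{\mathfrak t}]$. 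In particular, for any $A\in\B_{\mathfrak t}$ and any Borel $B$,
\[
\PP(A\cap B)=\int_A\PP|^{\tilde\omega}_{\B_{\mathfrak t}}(B)\,\dd\PP(\tilde\omega),
\]
which is the averaging statement of (b); its precise form, with the event $\{\omega|_{[0,\mathfrak t]}\in A\}$ genuinely recovered from $A$, follows once the concentration property (a) is in hand (see Step 3).

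\emph{Step 2 (uniqueness).} Fix a countable algebra $\mathcal D\subset\BBB(\Omega_X^{[0,\infty)})$ generating the Borel $\sigma$-algebra, available by separability. If $\tilde\omega\mapsto\mu^{\tilde\omega}$ is a second family with the same property, then for each $D\in\mathcal D$ one has $\mu^{\tilde\omega}(D)=\PP|^{\tilde\omega}_{\B_{\mathfrak t}}(D)$ for $\PP$-a.a.\ $\tilde\omega$; intersecting over the countably many $D$ produces a single $\PP$-null set off which $\mu^{\tilde\omega}$ and $\PP|^{\tilde\omega}_{\B_{\mathfrak t}}$ agree on $\mathcal D$, hence on all Borel sets by a monotone class argument. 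This gives uniqueness.

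\emph{Step 3 (concentration on atoms and property (b)).} The structural point is that $\B_{\mathfrak t}$ is countably generated and that its atom at $\omega$ is precisely $\mathrm{At}(\omega)=\{\tilde\omega:\mathfrak t(\omega)=\mathfrak t(\tilde\omega),\ \xi(\omega)|_{[0,\mathfrak t(\tilde\omega)]}=\xi(\tilde\omega)|_{[0,\mathfrak t(\tilde\omega)]}\}$: indeed $\B_{\mathfrak t}$ is generated by the sets $\{\mathfrak t\le q\}$ with $q$ rational together with sets of the form $\{\xi_{s\wedge\mathfrak t}\in U\}$ with $s$ rational and $U$ in a countable base of $X$, and these separate exactly the points belonging to distinct sets $\mathrm{At}(\cdot)$. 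Choosing a countable generating family $\{A_j\}\subset\B_{\mathfrak t}$, the defining property from Step 1 gives $\PP|^{\tilde\omega}_{\B_{\mathfrak t}}(A_j)=\mathbf 1_{A_j}(\tilde\omega)$ for $\PP$-a.a.\ $\tilde\omega$; deleting the countable union $\mathcal N$ of the exceptional sets, for every $\omega\notin\mathcal N$ the measure $\PP|^{\omega}_{\B_{\mathfrak t}}$ assigns mass $1$ to each $A_j\ni\omega$ and mass $0$ to each $A_j\not\ni\omega$, whence $\PP|^{\omega}_{\B_{\mathfrak t}}(\mathrm{At}(\omega))=1$ by a monotone class argument. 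This is (a). Feeding (a) back into Step 1, on $\{\tilde\omega\in A\}$ with $A\in\B_{\mathfrak t}$ one has $\mathbf 1_{\{\omega|_{[0,\mathfrak t]}\in A\}}=\mathbf 1_A(\tilde\omega)$ $\PP|^{\tilde\omega}_{\B_{\mathfrak t}}$-a.s., which together with the averaging identity yields the exact form of (b).

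I expect the only genuinely delicate step to be Step 3, namely verifying that the stopping time $\sigma$-algebra $\B_{\mathfrak t}$ is countably generated on the Polish path space and identifying its atoms with the sets $\mathrm{At}(\omega)$; this rests on the structure of the canonical filtration $\B_t=\BBB(\Omega_X^{[0,t]})$ and on the measurability of $\mathfrak t$. Once the atoms are pinned down, (a) is a routine consequence of the identity $\PP|^{\tilde\omega}_{\B_{\mathfrak t}}(A)=\mathbf 1_A(\tilde\omega)$ for $A\in\B_{\mathfrak t}$ plus a monotone class argument, and the remaining assertions are standard measure theory on standard Borel spaces, so it is legitimate here simply to invoke \cite[Theorem 1.1.6]{STVAR}.
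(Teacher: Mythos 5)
The paper offers no proof of this theorem and simply invokes Theorem 1.1.6 of Stroock and Varadhan \cite{STVAR}; your proposal reconstructs the content of that citation in the standard way (RCPD on a standard Borel space, uniqueness via a countable generating algebra, concentration on the atoms of $\B_{\mathfrak t}$), so it is essentially the same approach. The one nontrivial ingredient, which you correctly flag as the delicate step, is Galmarino's characterization of $\B_{\mathfrak t}$ via the stopping time $\mathfrak t$ and the stopped path $\xi_{\cdot\wedge\mathfrak t}$, which simultaneously yields countable generation of $\B_{\mathfrak t}$ and the identification of its atoms with the sets $\mathrm{At}(\omega)$; this is precisely what the paper delegates to the reference.
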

Note that a conditional probability corresponds to disintegration of a probability measure with respect to a $\sigma$-field. Accordingly, \textit{reconstruction} can be understood as the opposite process of disintegration, that is, some sort of ``gluing together" procedure. Based on \cite[Theorem 6.1.2]{STVAR} we have the following result on reconstruction.

\begin{thm}[Reconstruction]\label{Rec}
Let $(X,d_X)$ be a Polish space. Let $\PP\in\mathrm{Prob}[\Omega_{X}^{[0,\infty)}]$ and $\mathfrak t$ be a finite $(\B_t)$-stopping time. Suppose that $Q_{\omega}$ is a family of probability measures, such that
\[
\Omega_{X}^{[0,\infty)} \ni \omega \mapsto Q_{\omega} \in\mathrm{Prob}[\Omega_{X}^{[T,\infty)}],
\]
is $\B_{\mathfrak t}$-measurable and it holds
\begin{align*}
Q_\omega\Big(\tilde\omega\in \Omega_{X}^{[0,\infty)}:\,\xi_{\mathfrak t(\omega)}(\tilde\omega)=\xi_{\mathfrak t(\omega)}(\omega)\Big)=1
\end{align*}
for any $\omega\in \Omega_{X}^{[0,\infty)}$.

 Then there exists a unique probability measure $\PP\otimes_{\mathfrak t}Q\in \mathrm{Prob}[\Omega_{X}^{[0,\infty)}]$ such that:
\begin{itemize}
    \item [(a)] For any Borel set $A \in \mathscr B_{\mathfrak t}$ we have 
    \[
    (\PP\otimes_{\mathfrak t}Q)(A)=\PP(A);
    \]
    \item[(b)] For $\PP\otimes_{\mathfrak t}Q$-a.a. $\omega\in \Omega_X^{[0,\infty)}$ we have
    \[
    (\delta_\omega\otimes_{\mathfrak t(\omega)}Q_\omega)=\PP\otimes_{\mathfrak t}Q|_{\B_{\mathfrak t}}^{{\omega}}.
    \]
\end{itemize}
\end{thm}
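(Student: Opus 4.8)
The statement is a reformulation of \cite[Theorem~6.1.2]{STVAR} for our path space, so one legitimate route is simply to check that its hypotheses are met; I instead sketch a self-contained ``gluing'' construction. The plan is to build $\PP\otimes_{\mathfrak t}Q$ explicitly as an iterated integral, verify properties (a) and (b) by hand, and then deduce uniqueness from the disintegration formula of Theorem~\ref{Dis}.

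First I would introduce the concatenation map. Given $\omega\in\Omega_X^{[0,\infty)}$ and a path $\eta$ defined on $[\mathfrak t(\omega),\infty)$ with $\eta_{\mathfrak t(\omega)}=\omega_{\mathfrak t(\omega)}$, let $\omega\otimes_{\mathfrak t(\omega)}\eta$ be the trajectory that equals $\omega$ on $[0,\mathfrak t(\omega)]$ and $\eta$ on $[\mathfrak t(\omega),\infty)$; the matching endpoint condition is exactly what makes this trajectory continuous, i.e.\ an element of $\Omega_X^{[0,\infty)}$, and it is guaranteed by the hypothesis $Q_\omega(\eta:\ \xi_{\mathfrak t(\omega)}(\eta)=\xi_{\mathfrak t(\omega)}(\omega))=1$. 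One then sets, for $B\in\BBB(\Omega_X^{[0,\infty)})$,
\[
(\PP\otimes_{\mathfrak t}Q)(B):=\int_{\Omega_X^{[0,\infty)}}\Big(\int \mathbf 1_B\big(\omega\otimes_{\mathfrak t(\omega)}\eta\big)\,\dd Q_\omega(\eta)\Big)\,\dd\PP(\omega).
\]
The technical point here is that the inner integral is a $\B_{\mathfrak t}$-measurable function of $\omega$: this follows by a monotone-class argument from the joint measurability of $(\omega,\eta)\mapsto\omega\otimes_{\mathfrak t(\omega)}\eta$ (which uses that $\mathfrak t$ is a $(\B_t)$-stopping time, so $\omega\mapsto\mathfrak t(\omega)$ and $\omega\mapsto\omega|_{[0,\mathfrak t(\omega)]}$ are $\B_{\mathfrak t}$-measurable) together with the assumed $\B_{\mathfrak t}$-measurability of $\omega\mapsto Q_\omega$. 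Taking $B=\Omega_X^{[0,\infty)}$ shows $\PP\otimes_{\mathfrak t}Q$ is a probability measure.

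Property (a) is immediate: if $A\in\B_{\mathfrak t}$, then $\mathbf 1_A$ depends on a trajectory only through its restriction to $[0,\mathfrak t]$, which is unaffected by the gluing, so $\mathbf 1_A(\omega\otimes_{\mathfrak t(\omega)}\eta)=\mathbf 1_A(\omega)$ and, since $Q_\omega$ is a probability measure, the inner integral equals $\mathbf 1_A(\omega)$; hence $(\PP\otimes_{\mathfrak t}Q)(A)=\PP(A)$. For property (b) I would first check that $\omega\mapsto\delta_\omega\otimes_{\mathfrak t(\omega)}Q_\omega$ is $\B_{\mathfrak t}$-measurable (same monotone-class argument) and concentrated on the fibre $\{\tilde\omega:\xi_{\mathfrak t(\omega)}(\tilde\omega)=\xi_{\mathfrak t(\omega)}(\omega)\}$, and then verify the defining relation of the regular conditional probability: for $A\in\B_{\mathfrak t}$ and $B\in\BBB(\Omega_X^{[0,\infty)})$,
\[
(\PP\otimes_{\mathfrak t}Q)(A\cap B)=\int_{A}\big(\delta_\omega\otimes_{\mathfrak t(\omega)}Q_\omega\big)(B)\,\dd(\PP\otimes_{\mathfrak t}Q)(\omega),
\]
which, using (a) to replace the outer measure by $\PP$ on $\B_{\mathfrak t}$, is precisely the iterated integral defining $\PP\otimes_{\mathfrak t}Q$. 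By the uniqueness part of Theorem~\ref{Dis} this identifies $\delta_\omega\otimes_{\mathfrak t(\omega)}Q_\omega$ with $\PP\otimes_{\mathfrak t}Q|_{\B_{\mathfrak t}}^{\omega}$ for $\PP\otimes_{\mathfrak t}Q$-a.a.\ $\omega$, which is (b).

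Finally, uniqueness: any $\PP'$ satisfying (a) and (b) has, by the disintegration identity of Theorem~\ref{Dis}(b) applied with the stopping time $\mathfrak t$, the representation $\PP'(A\cap B)=\int_A(\delta_\omega\otimes_{\mathfrak t(\omega)}Q_\omega)(B)\,\dd\PP'(\omega)=\int_A(\delta_\omega\otimes_{\mathfrak t(\omega)}Q_\omega)(B)\,\dd\PP(\omega)$ for all $A\in\B_{\mathfrak t}$ and $B\in\BBB(\Omega_X^{[0,\infty)})$, which coincides with the formula defining $\PP\otimes_{\mathfrak t}Q$; hence $\PP'=\PP\otimes_{\mathfrak t}Q$. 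The main obstacle throughout is the measurability-and-continuity bookkeeping around the random time $\mathfrak t$ — that the gluing map is jointly measurable and lands in $C([0,\infty);X)$, and that the resulting $\omega$-dependence is $\B_{\mathfrak t}$-measurable — rather than anything probabilistically deep, which is exactly why the most economical write-up may be to invoke \cite[Theorem~6.1.2]{STVAR} directly after matching the hypotheses.
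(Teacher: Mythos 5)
The paper does not actually prove Theorem~\ref{Rec}: it cites \cite[Theorem~6.1.2]{STVAR} and takes the statement for granted, precisely the option you mention at the end. Your self-contained sketch is correct and is essentially the construction Stroock and Varadhan themselves use: the gluing map $(\omega,\eta)\mapsto\omega\otimes_{\mathfrak t(\omega)}\eta$, the iterated-integral definition of $\PP\otimes_{\mathfrak t}Q$, verification of (a) via the fact that sets in $\B_{\mathfrak t}$ depend on the trajectory only through $[0,\mathfrak t]$ (Galmarino's test), verification of (b) by checking the defining relation of the regular conditional probability, and uniqueness via the uniqueness part of Theorem~\ref{Dis}. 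One point worth making explicit in a full write-up is that the glued path has the \emph{same} stopping-time value, $\mathfrak t(\omega\otimes_{\mathfrak t(\omega)}\eta)=\mathfrak t(\omega)$; this follows from $\mathfrak t$ being a $(\B_t)$-stopping time and the glued path agreeing with $\omega$ on $[0,\mathfrak t(\omega)]$, and it is what makes the identity $\mathbf 1_A(\omega\otimes_{\mathfrak t(\omega)}\eta)=\mathbf 1_A(\omega)$ valid for $A\in\B_{\mathfrak t}$. Your sketch correctly locates where the hypothesis $Q_\omega(\xi_{\mathfrak t(\omega)}(\tilde\omega)=\xi_{\mathfrak t(\omega)}(\omega))=1$ enters (continuity of the glued trajectory, so the image lands in $\Omega_X^{[0,\infty)}$). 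Finally, note a small inconsistency in the theorem statement as printed: $Q_\omega$ is declared to take values in $\mathrm{Prob}[\Omega_X^{[T,\infty)}]$ with an undeclared $T$, yet is evaluated on subsets of $\Omega_X^{[0,\infty)}$; your reading of $Q_\omega$ as a measure on paths from time $\mathfrak t(\omega)$ onward is the intended one and matches the reconstruction property used later in Definition~\ref{almostMark} and Lemma~\ref{ris}.
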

We follow the abstract framework on Markov processes along the lines of \cite{BFHAAP} and the references therein. We assume that $(X, d_{X})$ and $(F,d_F)$ are two Polish spaces such that the embedding $F\hookrightarrow X$ be continuous and dense. Moreover, let $Y$ be a Borel subset of $F$. Since $(Y,d_F)$ is not necessarily a complete \textit{space}, the embedding $Y\hookrightarrow X$ is not necessarily dense. Next we define probability measures with support only on certain subsets of a Polish space.
\begin{definition}
Let $(X, d_{X})$ and $(F,d_F)$ be two Polish spaces.
Let $Y$ be a Borel subset of $F$ and let $\PP\in\mathrm{Prob}[\Omega_{X}^{[0,\infty)}]$. A family of probability measures $\PP$ is concentrated  on the paths with values in $Y$ if there is some $A \in \B(\Omega_{X}^{[0,\infty)})$ such that $\PP(A)=1$ and $A\subset\{\omega\in \Omega_{X}^{[0,\infty)}:\omega(\tau)\in Y\,\forall \tau \geq 0\}$. We write $\PP\in$ $\mathrm{Prob}_{Y}[\Omega_{X}^{[0,\infty)}]$.
\end{definition}
We are now in position to define the strong Markov property suitable for our purposes.
\begin{definition}[Strong Markov property]\label{almsure}
Let $(X, d_{X})$ and $(F,d_F)$ be two Polish spaces and $Y$ a Borel subset of $F$.
Let $y\mapsto \PP_y$ be a measurable map defined on a measurable subset $Y\subset F$ with values in $\mathrm{Prob}_{Y}[\Omega_{X}^{[0,\infty)}]$. The family $\{\PP_y\}_{y\in Y}$ has the strong Markov property if for every $(\mathfrak B_t)$-stopping time $\mathfrak t$ and each $y\in Y$ it holds
\[
\PP_{\omega(\mathfrak t)}\circ \Phi_{\mathfrak t(\omega)}^{-1}=\PP_{y}|_{\B_{\mathfrak t}}^{\omega}\quad\text{for}\,\,\PP_{y}-\text{a.a. } \omega \in \Omega_{X}^{[0,\infty)}.
\]
\end{definition}
Finally, based on the link between disintegration and reconstruction as observed in \cite{Kr}, we have the following definition (see \cite[Theorem 12.2.3]{STVAR} and \cite{GoRo} for its generalisation to Polish spaces).
\begin{definition}[Pre-Markov family]\label{almostMark}
Let $(X, d_{X})$ and $(F,d_F)$ be two Polish spaces and
$Y$ a Borel subset of $F$. Let $\mathcal{C}:Y\to \mathrm{Comp}(\mathrm{Prob}[\Omega_{X}^{[0,\infty)}])\cap\mathrm{Prob}_{Y}[\Omega_{X}^{[0,\infty)}]$ be a measurable map, where $\mathrm{Comp}(\cdot)$ denotes the family of all compact subsets. The family $\{\mathcal{C}(y)\}_{y\in Y}$ is called pre-Markovian if for each $y\in Y$, every $\PP\in \mathcal{C}(y)$ 
and every finite $(\mathfrak B_t)$-stopping time $\mathfrak t$ we have:
\begin{itemize}
    \item [(a)] The disintegration property holds, that is, 
there is a $\mathcal P$-nullset
$\mathcal N$ such that for any $\omega\notin\mathcal N$
\begin{align*}
\xi_{\mathfrak t(\omega)}(\omega)\in Y,\quad\mathcal P(\Phi_{\mathfrak t(\omega)}(\cdot)|_{\mathscr B_{\mathfrak t}}^\omega)\in \mathcal C(\xi_{\mathfrak t(\omega)}(\omega));
\end{align*}
    \item[(b)] The reconstruction property holds, that is, if a map $\Omega_{X}^{[0,\infty)}\ni\omega\to Q_\omega\in \mathrm{Prob}(\Omega_{X}^{[\mathfrak t,\infty)})$ satisfies the assumptions of Theorem \ref{Rec} and there is a $\mathcal P$-nullset
$\mathcal N$ such that for any $\omega\notin\mathcal N$ it holds
    \[
\xi_{\mathfrak t(\omega)}(\omega)\in Y,\quad Q_\omega\circ\Phi_{\mathfrak t(\omega)}\in \mathcal C(\xi_{\mathfrak t(\omega)}(\omega)),
    \]
    then we have $\PP\otimes_{\mathfrak t}Q\in \mathcal{C}(y)$.
\end{itemize}
\end{definition}
Note that Definition \ref{almostMark} is motivated by results in \cite{FlaRom,GoRo} and was used similarly in \cite[Section 6]{HZZ}. We conclude the abstract framework by stating the following result which is a minor modification of \cite[Theorem 2.7]{GoRo} and can be proved similarly to \cite[Theorem 2.6]{BFHAAP}.\begin{thm}[Strong Markov selection]\label{Mthm} Let $(X, d_{X})$ and $(F,d_F)$ be two Polish spaces and $Y$ a Borel subset of $F$. Let $\{\mathcal{C}(y)\}_{y\in Y}$ be a pre-Markov family (as defined in Definition \ref{almostMark}) with nonempty convex values. Then there is a measurable map $y\mapsto \PP_y$ defined on $Y$ with values in $\mathrm{Prob}_{Y}[\Omega_{X}^{[0,\infty)}]$ such that $\PP_{y}\in \mathcal{C}(y)$ for all $y\in Y$ and $\{\PP_{y}\}_{y\in Y}$ has the strong Markov property (as defined in Definition \ref{almsure})
\end{thm}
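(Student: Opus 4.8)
```latex
\begin{proof}[Proof strategy for Theorem \ref{Mthm}]
The plan is to apply the abstract selection machinery of Stroock--Varadhan, as adapted in \cite{FlaRom,GoRo,BFHAAP,HZZ}, more or less as a black box, once we check the measure-theoretic hypotheses. First I would recall the classical Kuratowski--Ryll-Nardzewski selection theorem: to select a single element $\PP_y\in\mathcal C(y)$ measurably from the multifunction $y\mapsto\mathcal C(y)$, it suffices to produce an ordering of $\mathrm{Prob}[\Omega_X^{[0,\infty)}]$ (equivalently, a countable family of bounded continuous functionals $J_1,J_2,\dots$) so that the successive minimisations $\mathcal C^{(1)}(y)=\operatorname{argmin}_{\PP\in\mathcal C(y)}J_1(\PP)$, $\mathcal C^{(2)}(y)=\operatorname{argmin}_{\PP\in\mathcal C^{(1)}(y)}J_2(\PP)$, etc., eventually single out one point, and so that each $\mathcal C^{(k)}$ is again a measurable, compact-, nonempty-, convex-valued pre-Markov family. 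Here I would use functionals of the form $J_{k}(\PP)=\int_0^\infty e^{-t}\,\E_\PP\big[f_k(\xi_t)\big]\,\dt$ (or integrated in a resolvent fashion against a countable dense set of bounded continuous $f_k$ on $X$), which are affine and continuous in $\PP$ on the relevant compact sets, so that the argmin over a compact convex set is again compact convex and nonempty.

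The two genuinely structural inputs are: (i) for each $k$, the reduced family $\mathcal C^{(k)}$ inherits the disintegration and reconstruction properties (a)--(b) of Definition \ref{almostMark}; and (ii) once a singleton-valued measurable family $\{\PP_y\}$ is obtained, it automatically satisfies the strong Markov property of Definition \ref{almsure}. For (i) one argues exactly as in \cite[Theorem 2.7]{GoRo}: disintegration is preserved because, if $\PP$ minimises $J_k$ over $\mathcal C(y)$, then $\PP$-a.e.\ regular conditional probability $\PP(\Phi_{\mathfrak t(\omega)}(\cdot)|_{\mathscr B_{\mathfrak t}}^\omega)$ must itself minimise $J_k$ over $\mathcal C(\xi_{\mathfrak t(\omega)}(\omega))$ --- otherwise one could glue in a strictly better conditional law via the reconstruction operator $\otimes_{\mathfrak t}$ of Theorem \ref{Rec}, contradicting optimality; and reconstruction is preserved because $J_k(\PP\otimes_{\mathfrak t}Q)=\int J_k^{(\le\mathfrak t)}\,\dd\PP+\int_{\Omega}\big(\text{shifted }J_k\text{-value of }Q_\omega\big)\,\dd\PP(\omega)$ splits additively, using $e^{-t}=e^{-\mathfrak t}e^{-(t-\mathfrak t)}$, so minimality of the pieces forces minimality of the glued measure. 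For (ii), once $\mathcal C^{(\infty)}(y)=\{\PP_y\}$, the disintegration property forces $\PP_y(\Phi_{\mathfrak t(\omega)}(\cdot)|_{\mathscr B_{\mathfrak t}}^\omega)\in\mathcal C(\xi_{\mathfrak t(\omega)}(\omega))$ and in fact lies in every reduced family, hence equals the unique element $\PP_{\xi_{\mathfrak t(\omega)}(\omega)}$, which is precisely $\PP_{\omega(\mathfrak t)}\circ\Phi_{\mathfrak t(\omega)}^{-1}$.

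I would then spell out the measurability of the final map $y\mapsto\PP_y$: each reduction step preserves measurability of the multifunction (the value function $y\mapsto\inf_{\mathcal C^{(k-1)}(y)}J_k$ is measurable since $\mathcal C^{(k-1)}$ is a measurable compact-valued multifunction and $J_k$ is continuous, and the level-set multifunction of a Carath\'eodory-type function is measurable), and the countable intersection $\mathcal C^{(\infty)}(y)=\bigcap_k\mathcal C^{(k)}(y)$ of a decreasing sequence of nonempty compacts is a nonempty compact, measurable in $y$; a singleton-valued measurable multifunction is a measurable function. Finally I would note that one must verify $\PP_y\in\mathrm{Prob}_Y[\Omega_X^{[0,\infty)}]$, i.e.\ the selected laws remain concentrated on $Y$-valued paths --- but this is built into the hypothesis $\mathcal C(y)\subset\mathrm{Prob}_Y[\Omega_X^{[0,\infty)}]$ and is stable under all the operations above.

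The main obstacle I anticipate is purely technical rather than conceptual: making the reduction functionals $J_k$ genuinely continuous (not merely measurable) on the compact sets $\mathcal C(y)$ in the weak topology on $\mathrm{Prob}[\Omega_X^{[0,\infty)}]$, and checking that the argmin operation interacts correctly with the disintegration/reconstruction of Theorems \ref{Dis}--\ref{Rec} --- in particular that the null sets involved in Definition \ref{almostMark}(a) can be chosen uniformly enough along the countable reduction. This is exactly the point handled in \cite[Theorem 2.6]{BFHAAP} and \cite[Theorem 2.7]{GoRo}, so I would carry out the argument by following those references line by line, with the only change being the replacement of their abstract pre-Markov family by ours; no new idea is needed beyond transcribing that proof.
\end{proof}
```
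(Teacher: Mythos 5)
Your proposal is essentially the same approach the paper takes: the paper does not reprove this theorem but cites it as a minor modification of Theorem~2.7 in \cite{GoRo}, proved as in Theorem~2.6 of \cite{BFHAAP}, and your sketch is a faithful outline of that Krylov/Stroock--Varadhan selection scheme (successive minimisation of countably many resolvent-type functionals over the compact convex pre-Markov family, with disintegration and reconstruction checked to survive each reduction and the singleton limit yielding the strong Markov property). One small point to be careful about when carrying this out: a single exponential weight $e^{-t}$ is not enough to separate measures, so the countable family of functionals must range over a countable dense set of \emph{pairs} $(\lambda,f)$ of discount rates and test functions, as is done in \cite{FlaRom,GoRo} --- you gesture at this with the phrase about the ``resolvent fashion,'' but it is worth making explicit.
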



\subsection{Martingale solutions as measures on the space of trajectories}\label{markov}
Firstly, we observe that from the proof of Theorem \ref{thm:mainEuler}, the natural filtration associated to a dissipative measure-valued martingale solution in the sense of Definition \ref{E:dfn} is the joint canonical filtration of $[\varrho, \vec m, \mathcal{R}_{\tt{conv}},\mathcal{R}_{\tt{press}},W]$. However, the processes $[\mathcal{R}_{\tt{conv}},\mathcal{R}_{\tt{press}}]$ are classes of equivalences in time and not stochastic processes in the classical sense. Therefore, it is not obvious as to how one should formulate the Markovianity of the system (\ref{1.1}). To circumvent this problem, we shall introduce the new variable $\vec R$ given by
\[
\vec R = \int_{0}^{\cdot}\left(\mathcal{R}_{\tt{conv}},\mathcal{R}_{\tt{press}}\right)\,\dd s.
\]
This idea goes back to \cite{BFHAAP} and has also been used in \cite{Mo}. 
Consequently, the notion of new variables allows us to establish the Markov selection for the joint law of $[\varrho,\vec m, \vec R]$. In this case, the stochastic process has continuous trajectories and contains all necessary information. The initial data for $\vec R$ is superfluous and only needed for technical reasons in the selection process.

To study Markov selection, it is desirable to consider the martingale solutions as probability measures $\PP \in $ Prob $[\Omega_{\tt{Euler}}]$ such that
\[
\Omega_{\tt{Euler}}= C_{\text{loc}}([0,\infty); W^{-k,2}(\T)),
\]
where $k>N/2$. Adopting the set-up of the previous subsection, we set $X= W^{-k,2}(\T)$. Accordingly, let $\B$ denote the Borel $\sigma$-field on $\Omega$. Let $\boldsymbol{\xi}=(\xi^1,\boldsymbol{\xi}^2,\boldsymbol{\xi}^3)$ denote the canonical processes of projections such that
\[
\boldsymbol{\xi}=(\xi^1,\boldsymbol{\xi}^2,\boldsymbol{\xi}^3):\Omega\to \Omega, \quad \boldsymbol{\xi}_t\omega=(\xi_t^1,\boldsymbol{\xi}_t^2,\boldsymbol{\xi}_t^3)(\omega)=\omega_t \in W^{-k,2}(\T),\text{for any $t\geq 0$},
\]
where the notation $\omega_t$ indicates that our random variable is time dependent. In addition, let $(\B_t)_{t\geq 0}$ be the filtration associated to canonical processes given by
\[
\B_t:= \sigma(\boldsymbol{\xi}|_{[0,t]}),\quad t\geq 0,
\]
which coincides with the Borel $\sigma$-field on $\Omega^{[0,t]}=([0,t];W^{-k,2}(\T))$. For the subsequent analysis, a dissipative martingale solution
\[
((\Omega,\FF, (\FF_t)_{t\geq 0},\p),\varrho,\vec m,\mathcal{R}_{\tt{conv}},\mathcal{R}_{\tt{press}}, W),
\]
in the sense of Definition \ref{E:dfn} will be considered as a probability law $\PP$, that is,
\[
\PP =\LL\left[\varrho, \vec m,\int_{0}^{\cdot} (\mathcal{R}_{\tt{conv}},\mathcal{R}_{\tt{press}})\, \dd s\right]\in\text{ Prob$[\Omega_{\tt{Euler}}]$}.
\]
Consequently, we obtain a filtered probability space $(\Omega,\B, (\B_t)_{t\geq 0}, \PP)$. Furthermore, we introduce the space
 \begin{eqnarray*}
 F &=& \left\{[\varrho,\vec m,\vec{R}] \in\Tilde{F}\bigg|\int_{\T}\frac{|\vec m|^2}{|\varrho|}\dd x<\infty\right\},\\
 \Tilde{F}&=&L^{\gamma}(\T)\times L^{\frac{2\gamma}{\gamma+1}}(\T)\times (W^{-k,2}(\T, R^{N^2+1})).
 \end{eqnarray*}
We augment $F$ with the points of the form $(0,\vec0,\vec R)$ for  $\vec R \in W^{-k,2}(\T,R^{N^2+1})$. Therefore, $F$ is a Polish space with metric
 \begin{equation}\label{metric}
    \dd_{F}(y,z)=\dd_{Y}((y^1,\vec y^2, \vec y^3),(z^1,\vec z^2,\vec z^3))=\|y-z\|_{\tilde F}+\left\|\frac{\vec y^2}{\sqrt{|y^1|}}-\frac{\vec z^2}{\sqrt{|z^1|}}\right\|_{L_x^2}.
 \end{equation}
 \newline \\
Moreover, the inclusion $F \hookrightarrow X$ is dense. Accordingly, we define a subset
 \[
 Y =\left\{[\varrho, \vec m,\vec{R}]\in X\bigg|\varrho\not\equiv 0, \varrho \geq 0, \int_{\T}\frac{|\vec m|^2}{\varrho}\, \dd x< \infty\right\}.
 \]
 We observe that $(Y,d_{F})$ is not complete because $\varrho\not\equiv0$, and the inclusion $Y\hookrightarrow X$ is not dense since $\varrho \geq 0$. We now a define a solution to the martingale problem. 
 
 \begin{definition}[Dissipative measure-valued martingale solution]\label{dfnMarkov} A Borel probability measure $\PP$ on $\Omega_{\tt{Euler}}$ is called a solution to the martingale problem associated to (\ref{1.1}) provided:
 \begin{itemize}
     \item [(a)] It holds
      \begin{align*}
          \PP&\left(\xi^1\in C_{\mathrm{loc}}[0,\infty);(L^{\gamma}(\T),w)),\xi^1\geq 0\right)=1,\\
           \PP&\left(\boldsymbol{\xi}^2\in C_{\mathrm{loc}}[0,\infty);(L^{\frac{2\gamma}{\gamma+1}}(\T),w))\right)=1,\\
             \PP&\left(\boldsymbol{\xi}^3=(\boldsymbol{\xi}^3_{\tt{conv}},{\xi}^3_{\tt{press}})\in W_{\mathrm{weak}-(*)}^{1,\infty}(0,\infty;\mathcal{M}^+(\T, \R^{3\times 3}\times\R))\right)=1;
     \end{align*}
     \item[(b)] 
The energy inequality is satisfied in the sense that
there is $z\geq \mathscr E_0$ such that
\begin{align} \label{eq:enedW2}
\begin{aligned}
- \int_0^\infty &\partial_t \psi \,
\mathfrak E_t \dt\leq
\psi(0) z
\end{aligned}
\end{align}
holds $\mathcal P$-a.s. for any $\psi \in C^\infty_c([0, \infty))$, where
     \begin{align*}
    \mathfrak{E}_t&:= \int_{\T}\left[\frac{1}{2}\frac{|\boldsymbol{\xi}^2|^2}{\xi^1}+\tfrac{a}{\gamma-1}(\xi^1)^{\gamma}\right](t)\, \dd x + \frac{1}{2}\int_{\T}\dd (\mathrm{tr}\partial_t\boldsymbol{\xi}^3_{\tt{conv}})(t) +\tfrac{1}{\gamma-1}\int_{\T}\dd(\partial_t{\xi}^3_{\tt{press}})(t),\\
\mathscr E_0&:=\int_{\mt}\left[\frac{1}{2}\frac{|\vec y^2|^2}{y^1}+\frac{a}{\gamma-1}(y^1)^\gamma\right]\dx;
    \end{align*}

\item[(c)] For any $\varphi \in C^1(\T)$,
the stochastic process

\begin{eqnarray*}
         \mathscr M^1(\varphi): [\omega, t] \mapsto\left[\int_{\T}{\xi}_t^1 \, {\varphi}\right]_{s=0}^{s=t}&+&\int_0^t \int_{\T}{\boldsymbol{\xi}_{t}^{2} \cdot \nabla \psi  } \,\dd x\ds\\
&& -\sum_{k=1}^K\int_0^t\int_{\mt}{\xi}^1_t\,\Div(\bfsigma_k\otimes\bfsigma_k\nabla_{x}{\varphi})\dx \ds\nonumber
    \end{eqnarray*}
is a square integrable $((\mathfrak{B}_{t})_{t\geq0},\PP)$-martingale with quadratic variation
\[
\frac{1}{2} \int_0^\cdot \sum_{k=1}^K \left( \int_{\T}{{\xi}_t^1\bfsigma_k\cdot\nabla_x {\varphi}}\, \dd x \right)^2\dd s;
\]
\item[(d)] For any $\boldsymbol{\varphi} \in C^1(\T, \R^3)$,
the stochastic process

\begin{eqnarray*}
         \mathscr M^2( \boldsymbol{\varphi}): [\omega, t] \mapsto\left[\int_{\T}\boldsymbol{\xi}_t^2 \cdot \boldsymbol{\varphi}\right]_{s=0}^{s=t}&-&\int_{0}^{t}\int_{\T}\left[\frac{\boldsymbol{\xi}_t^2 \otimes\boldsymbol{\xi}_t^2}{\xi_t^1}:\nabla\boldsymbol{\varphi}+p(\xi_t^1)\mathrm{div}\boldsymbol{\varphi}\right]\, \dd x \dd s\\
         &&-\int_{0}^{t}\int_{\TN}\nabla \boldsymbol{\varphi}:\dd (\partial_t\boldsymbol{\xi}_{\tt{conv}}^3) \dd s-\int_{0}^{t}\int_{\T}\mathrm{div} \boldsymbol{\varphi}\,\dd (\partial_t{\xi}_{\tt{press}}^3) \dd s\nonumber\\
&& -\sum_{k=1}^K\int_0^t\int_{\mt}\boldsymbol{\xi}^2_t\cdot\Div(\bfsigma_k\otimes\bfsigma_k\nabla_{x}\boldsymbol{\varphi})\dx \ds\nonumber
    \end{eqnarray*}
is a square integrable $((\mathfrak{B}_{t})_{t\geq0},\PP)$-martingale with quadratic variation
\[
\frac{1}{2} \int_0^\cdot \sum_{k=1}^K \left( \int_{\T}{\boldsymbol{\xi}_t^2\otimes\bfsigma_k:\nabla_x \boldsymbol{\varphi}}\, \dd x \right)^2\dd s
\]
and the corss variation between $ \mathscr M^2( \boldsymbol{\varphi})$ and  $\mathscr M^1( {\varphi})$ is given by
\[
\frac{1}{2} \int_0^\cdot \sum_{k=1}^K \left( \int_{\T}{\boldsymbol{\xi}_t^2\otimes\bfsigma_k:\nabla_x \boldsymbol{\varphi}}\, \dd x \right)\left( \int_{\T}{{\xi}_t^1\bfsigma_k\cdot\nabla_x {\varphi}}\, \dd x \right)\dd s.
\]
 \end{itemize}
 \end{definition}
 \begin{rmk}
The real number $z$ in Definition 
\ref{dfnMarkov} (b) allows of the possibility of
an initial energy jump.
It is reminiscent of \cite[Definition 3.3]{HZZ},
where $z$ is a stochastic process taking into account the stochastic terms in the energy inequality, which appear in the case of It\^{o}-type noise.
Since in our situation 
the energy balance is not effected by the noise,
it simplifies to a constant.
\end{rmk}
 In the following we state the relation between Definition \ref{E:dfn} and Definition \ref{dfnMarkov}.
 
 \begin{prop}\label{equdfns}
 The following statement holds true
 \begin{enumerate}
     \item Let $
((\Omega,\FF, (\FF_t)_{t\geq 0},\p),\varrho,\vec m,\mathcal{R}_{\tt{conv}},\mathcal{R}_{\tt{press}}, W)$ be a dissipative martingale solution in the sense of Definition \ref{E:dfn}. Then for every $\FF_0$-measurable random variables $\mathbf{R}_0$ with $\vec R_0 \in W^{-k,2}(\T,R^{N^2+1})$ we have that 
\begin{equation}\label{marklawA}
\PP= \LL\left[\varrho, \vec m,\vec R_0+\int_{0}^{\cdot}(\mathcal{R}_{\tt{conv}},\mathcal{R}_{\tt{press}})\,\dd s\right]\in \mathrm{Prob}[\Omega_{\tt{Euler}}]
\end{equation}
is a solution to the martingale problem in the sense of Definition \ref{dfnMarkov}.
\item Let $\PP$ be a solution to the martingale problem associated to (\ref{1.1}) in the sense of Definition \ref{dfnMarkov}, where $z= \mathscr E_0$. Then there exists a dissipative martingale solution $
((\Omega,\FF, (\FF_t)_{t\geq 0},\p),\varrho,\vec m, \mathcal{R}_{\tt{conv}},\mathcal{R}_{\tt{press}}, W)$  to the system (\ref{1.1}) in the sense of Definition \ref{E:dfn} 
and an $\FF_0$-measurable random variables $ \mathbf{R}_0$ with $\vec R_0 \in W^{-k,2}(\T,\R^{N^2+1})$ such that 
\begin{equation}\label{marklawB}
\PP= \LL\left[\varrho, \vec m,\vec R_0+\int_{0}^{\cdot}(\mathcal{R}_{\tt{conv}},\mathcal{R}_{\tt{press}})\,\dd s\right]\in \mathrm{Prob}[\Omega_{\tt{Euler}}].
\end{equation}
 \end{enumerate}
 \end{prop}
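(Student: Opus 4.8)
The plan is to prove the two implications essentially by unwinding the definitions, exploiting the fact that the energy balance in our setting is purely deterministic. For part (1), I would start from a dissipative measure-valued martingale solution $((\Omega,\FF,(\FF_t),\p),\varrho,\vec m,\mathcal R_{\tt conv},\mathcal R_{\tt press},W)$ and set $\vec R := \vec R_0+\int_0^\cdot(\mathcal R_{\tt conv},\mathcal R_{\tt press})\ds$, which by the integrability in \eqref{eq:para} is $\p$-a.s. an absolutely continuous (hence $W^{1,\infty}_{\rm loc}$) $W^{-k,2}$-valued path, so that $[\varrho,\vec m,\vec R]$ has continuous trajectories and the push-forward law $\PP$ lives on $\Omega_{\tt Euler}$. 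Item (a) of Definition \ref{dfnMarkov} is then immediate from parts (c)--(e) of Definition \ref{E:dfn}; item (b) follows from \eqref{eq:enedW2} with $z=\mathscr E_0$, after noting $\partial_t\boldsymbol\xi^3=(\mathcal R_{\tt conv},\mathcal R_{\tt press})$ $\p$-a.s. The real content is items (c)--(d): I would rewrite the Stratonovich form of \eqref{eq:cont}--\eqref{eq:mcxs} in It\^o form (this is already done in Section 2.1), observe that the resulting It\^o stochastic integrals $-\sum_k\int_0^\cdot\int_{\T}\varrho\bfsigma_k\cdot\nabla_x\varphi\dx\,\dd W_k$ and $-\sum_k\int_0^\cdot\int_{\T}\bfm\otimes\bfsigma_k:\nabla_x\boldsymbol\varphi\dx\,\dd W_k$ are square-integrable $(\FF_t)$-martingales (using the energy bound and $\gamma>N/2$ exactly as in \eqref{estimate}--\eqref{estimate'}), and compute their quadratic and cross variations by It\^o isometry. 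Since $\mathscr M^1(\varphi)$, $\mathscr M^2(\boldsymbol\varphi)$ are, by the equations, $\p$-a.s. equal to these stochastic integrals, and since martingality, square-integrability and the (co)variation identities are properties of the joint law of $[\varrho,\vec m,\vec R]$ with respect to its canonical filtration, they transfer to $\PP$ and the canonical process $\boldsymbol\xi$ on $\Omega_{\tt Euler}$. One subtlety is that the canonical filtration $(\mathfrak B_t)$ is generated by $\boldsymbol\xi$ alone and not by a driving Wiener process; here I would invoke the standard fact (as in \cite{BFHbook}, \cite{HZZ}) that a continuous square-integrable martingale with a prescribed quadratic-variation structure of this bilinear form determines, via a martingale representation argument on a possibly enlarged space, the correct noise — but for the purpose of Definition \ref{dfnMarkov} only the martingale property and the (co)variations are required, so no representation is needed at this stage.

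For part (2), the direction is reversed: given $\PP\in\mathrm{Prob}[\Omega_{\tt Euler}]$ solving the martingale problem with $z=\mathscr E_0$, I would build a stochastic basis and a Wiener process realising the martingales $\mathscr M^1,\mathscr M^2$. Concretely, take $(\Omega,\FF,\p)=(\Omega_{\tt Euler},\overline{\mathfrak B},\PP)$, let $\FF_t$ be the $\PP$-augmentation of $\mathfrak B_t$, and set $\varrho:=\xi^1$, $\vec m:=\boldsymbol\xi^2$, and $(\mathcal R_{\tt conv},\mathcal R_{\tt press}):=\partial_t\boldsymbol\xi^3$ (well-defined as a weak-$*$ measurable family by item (a), with $\vec R_0:=\boldsymbol\xi^3(0)$ being the $\FF_0$-measurable initial value). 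The family $\{\mathscr M^1(\varphi),\mathscr M^2(\boldsymbol\varphi)\}_{\varphi,\boldsymbol\varphi}$ is, by items (c)--(d), a system of square-integrable martingales whose mutual (co)variations are the bilinear forms $\sum_k(\int\varrho\bfsigma_k\cdot\nabla_x\varphi)(\int\varrho\bfsigma_k\cdot\nabla_x\tilde\varphi)$, etc.; applying the standard martingale representation theorem for Hilbert-space-valued martingales (see \cite[Chapter~2]{BFHbook}, or \cite{HZZ} in the Euler context) — possibly after enlarging the probability space by an independent Wiener process to absorb any degeneracy where the $\bfsigma_k$-span is deficient — yields a $K$-dimensional $(\FF_t)$-Wiener process $W=(W_k)$ such that $\mathscr M^1(\varphi)=-\sum_k\int_0^\cdot\int_\T\varrho\bfsigma_k\cdot\nabla_x\varphi\dx\,\dd W_k$ and similarly for $\mathscr M^2$. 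Unwinding the definition of $\mathscr M^1,\mathscr M^2$ and converting back from It\^o to Stratonovich (adding the correction terms computed in Section 2.1, which are exactly the deterministic double-divergence terms already present in the definition of $\mathscr M^1,\mathscr M^2$) gives \eqref{eq:cont}--\eqref{eq:mcxs}; the energy inequality \eqref{eq:enedW2} with $z=\mathscr E_0$ is just item (b); and the function-space memberships (c)--(e) of Definition \ref{E:dfn} follow from item (a) together with the deterministic energy bound in (b), which in particular forces $[\varrho,\vec m,\vec R](t)\in Y$ for a.e.\ $t$. This produces a dissipative measure-valued martingale solution whose associated law is $\PP$, as required.

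The main obstacle is the martingale representation step in part (2): one must pass from the abstract martingale problem, where the only information is the quadratic-variation structure of $\mathscr M^1,\mathscr M^2$ expressed through the fixed solenoidal fields $\bfsigma_k$, to an actual driving Wiener process on a (possibly enlarged) stochastic basis. This is routine when the linear maps $\psi\mapsto(\int_\T\varrho\bfsigma_k\cdot\nabla_x\psi\dx)_k$ are surjective with a uniform bound, but in general the "diffusion coefficient" $\varrho\mapsto(\varrho\bfsigma_k)_k$ may degenerate (e.g.\ where $\varrho$ vanishes), so one must either adjoin an independent auxiliary Wiener process on $[0,\infty)\times\Omega$ to span the complementary directions — the classical device for non-invertible diffusion coefficients, see \cite[Chapter~2]{BFHbook} — or argue directly with a cylindrical-type representation. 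A secondary technical point, shared with the analogous results in \cite{BFHAAP} and \cite{HZZ}, is checking that all the function-space and adaptedness requirements of Definition \ref{E:dfn} — in particular the random-distribution interpretation of adaptedness for $\mathcal R_{\tt conv},\mathcal R_{\tt press}$ and the pointwise-in-time constraint $\int_\T|\vec m|^2/\varrho\dx<\infty$ — are genuinely encoded by the trajectory-space formulation; this is where the choice of the Polish space $F$ and the metric \eqref{metric} does the bookkeeping, and one simply verifies $\PP$-a.s.\ membership in $Y$ for Lebesgue-a.e.\ $t$ using the energy inequality in item (b).
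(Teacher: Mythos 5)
Your proposal matches the paper's proof in both directions: for part (1) the paper likewise transfers the martingale property and the quadratic/cross variation identities of $\mathscr M^1,\mathscr M^2$ from the original filtered space to the canonical filtration on $\Omega_{\tt{Euler}}$ via equality of laws (testing against a continuous functional $h$ of $\bfxi|_{[0,s]}$), and for part (2) it applies the martingale representation theorem of \cite[Thm.~8.2]{Prato} on a product space $\Omega\times\tilde\Omega$ to recover the $K$-dimensional Wiener process, then sets $\varrho:=\xi^1$, $\bfm:=\bfxi^2$, $(\mathcal R_{\tt{conv}},\mathcal R_{\tt{press}}):=\partial_t\bfxi^3$. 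Your additional remarks — that no representation is needed in part (1) because only law-determined quantities enter, and that possible degeneracy of the map $\varphi\mapsto(\int_{\T}\varrho\,\bfsigma_k\cdot\nabla\varphi\dx)_k$ is absorbed by enlarging the probability space — correctly make explicit what the paper handles implicitly through the extended basis.
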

\begin{proof}
$(1)\Rightarrow (2)$:
Let $\big((\mathcal O,\mf,(\mf_t)_{t\geq0},\prst),\varrho,\bfm,\mathcal{R}_{\tt{conv}},\mathcal{R}_{\tt{press}},W)$ be a dissipative martingale solution in the sense of Definition \ref{E:dfn} and let $\mathbf R_0$ be an arbitrary
 $\mathfrak F_0$-measurable random variable with values in $W^{-k,2}(\mt,R^{N^2+1})$.
We shall show that the probability law given by \eqref{marklawA} is a solution to the martingale problem in the sense of Definition \ref{dfnMarkov}.

The point (a) in Definition \ref{dfnMarkov} follows  from Definition \ref{E:dfn}, \cite[Lemma 3.3]{BFHAAP} and the definition of $\PP$ as the pushforward measure generated by $[\vr,\bfm,\mathbf R]$.
Since the time integrals of the total energy are measurable functions on the subset of $\Omega_{\tt Euler}$, where the law $\PP$ is supported, we deduce that the points  (b) in Definition \ref{dfnMarkov} (with $z=\mathscr E_0$) hold. Next, we recall that by definition of the filtration $(\mathfrak{B}_{t})_{t\geq0}$, the canonical process $\bfxi=(\xi^{1},\bfxi^{2},\bfxi^{3})$ is $(\mathfrak{B}_{t})$-adapted. Hence by \cite[Lemma 3.3]{BFHAAP}, $\partial_{t}\bfxi^{3}$ is a $(\mathfrak{B}_{t})$-adapted random distribution taking values in $L^{\infty}(0,\infty;\mathcal M^{+}(\mt,R^{N\times N}\times R))$.

In order to show (c) and (d) we observe that all the  expressions appearing in the definition of $\mathscr{M}^1(\varphi)$ and $\mathscr M^2(\bfphi)$ are measurable functions on the subset of $\Omega_{\tt Euler}$ where $\PP$ is supported. Moreover, similarly to \cite[Lemma 3.3]{BFHAAP} we see that 
\begin{equation*}
\begin{split}
\mathfrak M^2(\bfphi)_t&:=\left[\int_{\T}\vec m \cdot \boldsymbol{\varphi}\dx\right]_{\tau=0}^{\tau=t}-\int_{0}^{t}\int_{\T}\Big(\frac{\vec m \otimes\vec m}{\varrho}:\nabla\boldsymbol{\varphi}+p(\varrho)\Big)\, \dd x\, \dd r\\
         &-\int_{0}^{t}\int_{\T}\nabla \boldsymbol{\varphi}:\dd \mathcal{R}_{\tt{conv}} \dd r-\int_{0}^{t}\int_{\T}\mathrm{div} \boldsymbol{\varphi}\,\dd \mathcal{R}_{\tt{press}} \dd r\\
         &-\sum_{k=1}^K\int_0^t\int_{\mt}\bfm\cdot\Div(\bfsigma_k\otimes\bfsigma_k\nabla_{x}\boldsymbol{\varphi})\dx \dd r.
\end{split}
\end{equation*}
is a martingale with respect to the canonical filtration generated by $[\vr,\bfm,\mathbf R]$. This directly implies the desired martingale property of $\mathscr{M}(\bfphi)$ as follows. We consider increments $X_{t,s}=X_t-X_s$, $s\leq t$, of stochastic processes. Then we obtain for $\bfphi\in C^\infty(\mt,R^N)$
and a continuous function $h:\Omega_{\tt Euler}^{[0,s]}\rightarrow[0,1]$ that
\begin{align*}
\E^{\mathcal P} \big[h(\bfxi|_{[0,s]})\mathscr M (\bfphi)_{s,t}\big]&=
\E^\p\big[h([\varrho,\bfm,\mathbf R]|_{[0,s]})\mathfrak M(\bfphi)_{s,t}\big]=0.
\end{align*}
Similarly, we obtain
\begin{align*}
&\E^U\bigg[h(\bfxi|_{[0,s]})\Big([\mathscr M^2 (\bfphi)^2]_{s,t}-\mathscr{N}^2(\bfphi)_{s,t}\Big)\bigg]=\E^\p\bigg[h([\varrho,\bfq,\bfU]|_{[0,s]})\Big([\mathfrak M^2 (\bfphi)^2]_{s,t}-\mathfrak N^2(\bfphi)_{s,t}\Big)\bigg]=0,
\end{align*}
where
\begin{align*}
\mathfrak N^2(\bfphi)_t&=\int_0^t\sum_{k=1}^K \left( \int_{\T}{\bfm\otimes\bfsigma_k:\nabla_x \boldsymbol{\varphi}}\, \dd x \right)^2\,\dif r,\\
\mathscr{N}^2(\bfphi)_t&= \int_0^t \sum_{k=1}^K \left( \int_{\T}{\boldsymbol{\xi}_t^2\otimes\bfsigma_k:\nabla_x \boldsymbol{\varphi}}\, \dd x \right)^2\,\dd r.
\end{align*}
As a consequence we deduce that $\mathscr M ^2(\bfphi)$ is a $(\mathfrak{B}_{t})$-martingale with quadratic variation $\mathscr{N}^2(\bfphi)$.

Similarly, we can prove that $\mathscr M ^1(\varphi)$ is a $(\mathfrak{B}_{t})$-martingale with quadratic variation 
$$\mathscr{N}^1(\varphi):=\int_0^t \sum_{k=1}^K \left( \int_{\T}{{\xi}_t^1\bfsigma_k\cdot\nabla {\varphi}}\, \dd x \right)^2\,\dd r.$$
Finally, we have
\begin{align*}
&\E^U\bigg[h(\bfxi|_{[0,s]})\Big([\mathscr M^2 (\bfphi)\mathscr M^1(\varphi)]_{s,t}-\mathscr{N}^{12}(\bfphi,\varphi)_{s,t}\Big)\bigg]\\&=\E^\p\bigg[h([\varrho,\bfq,\bfU]|_{[0,s]})\Big([\mathfrak M^2 (\bfphi)\mathfrak M^1(\varphi)]_{s,t}-\mathfrak N^{12}(\bfphi,\varphi)_{s,t}\Big)\bigg]=0,
\end{align*}
where
\begin{align*}
\mathfrak N^{12}(\bfphi,\varphi)_t&=\int_0^t\sum_{k=1}^K \left( \int_{\T}{\bfm\otimes\bfsigma_k:\nabla_x \boldsymbol{\varphi}}\, \dd x \right)\left( \int_{\T}{\varrho\bfsigma_k\cdot\nabla {\varphi}}\, \dd x \right)\,\dif r,\\
\mathscr{N}^2(\bfphi,\varphi)_t&= \int_0^t \sum_{k=1}^K \left( \int_{\T}{\boldsymbol{\xi}_t^2\otimes\bfsigma_k:\nabla_x \boldsymbol{\varphi}}\, \dd x \right)\left( \int_{\T}{{\xi}_t^1\bfsigma_k\cdot\nabla{\varphi}}\, \dd x \right)\,\dd r.
\end{align*}
This proves the formula for the cross variation.

$(2)\Rightarrow (1)$: Let $\PP\in{\rm Prob}[\OTN_{\tt Euler}]$ be a solution to the martingale problem in the sense of Definition \ref{dfnMarkov}.
We have to find a stochastic basis $(\mathcal O,\mf,(\mf_t)_{t\geq0},\prst)$, density $\vr$, momentum $\bfm$, defect measures $\mathcal{R}_{\tt{conv}}$ and $\mathcal{R}_{\tt{press}}$ as well as
a $K$-dimensional Wiener process $W$ such that $((\mathcal O,\mf,(\mf_t)_{t\geq0},\prst),\vr,\bfm, \mathcal{R}_{\tt{conv}},\mathcal{R}_{\tt{press}},W)$ is a dissipative martingale solution in the sense of Definition \ref{E:dfn}.

In view of (c) and (d) in Definition \ref{dfnMarkov} together with the standard martingale representation theorem (see \cite[Thm. 8.2]{Prato}) applied to $\mathscr M^1(\varphi)+\mathscr M^2(\bfphi)$ we infer the existence of an extended stochastic basis
\begin{equation*}
(\Omega\times\tilde\Omega,\mathfrak B\otimes{\tilde{\mathfrak B}},(\mathfrak B_{t}\otimes{\tilde{\mathfrak B}}_{t})_{t\geq0} ,U\otimes\tilde U),
\end{equation*}
and a $K$-dimensional Wiener process
$W$ adapted to $(\mathfrak B_{t}\otimes{\tilde{\mathfrak B}}_{t})_{t\geq0}$, such that
$$\mathscr M^1(\varphi)+\mathscr M^2(\bfphi)=-\sum_{k=1}^K\int_0^\tau\int_{\mt}\varrho\bfsigma_k:\nabla{\varphi}\,\dx\,\dd W_k -\sum_{k=1}^K\int_0^\tau\int_{\mt}\bfm\otimes\bfsigma_k:\nabla_x \boldsymbol{\varphi}\,\dx\,\dd W_k,$$
where
\[
\vr(\omega, \widetilde{\omega}) := \xi^1(\omega), \quad \bfm(\omega, \widetilde{\omega}) :=\bfxi^2(\omega),\quad(\mathcal R_{\tt{conv}},\mathcal R_{\tt{press}})(\omega,\tilde\omega):=\partial_t\bfxi^{3}(\omega).
\]
Choosing for $(\mathcal O,\mf,(\mf_t)_{t\geq0},\prst)$ the above extended probability space with the corresponding augmented filtration, then
$
 \big((\mathcal O,\mf,(\mf_t)_{t\geq0},\prst),\vr,\bfm,\mathcal R_{\tt{conv}},\mathcal R_{\tt{press}},W\big)
$
is a dissipative martingale solution solution
in the sense of Definition \ref{E:dfn}. Furthermore, it holds
\begin{equation*}
\PP =\mathcal{L} \left[ {\vr, \vc{m},\mathbf R=\mathbf R_{0}+\int_0^\cdot(\mathcal R_{\tt{conv}},\mathcal R_{\tt{press}})\,\dd s} \right] \in {\rm Prob}[\OTN_{\tt Euler}],
\end{equation*}
where by definition $\mathbf R_{0}(\omega,\tilde\omega)=\bfxi^{3}_{0}(\omega).$
\end{proof}

\subsection{Weak-strong uniqueness} Given a probability measure $\mathcal P\in\mathrm{Prob}[\Omega_{\tt Euler}]$ which is a solution to the martingale problem  in the sense of Definition \ref{dfnMarkov}
we construct a
 stochastic basis
\begin{equation*}
(\Omega\times\tilde\Omega,\mathfrak B\otimes{\tilde{\mathfrak B}},(\mathfrak B_{t}\otimes{\tilde{\mathfrak B}}_{t})_{t\geq0} ,\mathcal P\otimes\tilde{\mathcal P}),
\end{equation*}
and a $K$-dimensional Wiener process
$W$ adapted to $(\mathfrak B_{t}\otimes{\tilde{\mathfrak B}}_{t})_{t\geq0}$ as in the proof of Proposition \ref{equdfns}. By \cite{ART} there is a strong solution (in the sense of Definition \ref{def:compstrong}) $[r,\vec v]$ with stopping time $\mathfrak{s}$ defined on that probability space. We introduce the stopping time
\begin{align}\label{eq:0705}
\mathfrak s_R:=\inf\{t\in(0,\mathfrak s]:\,\|\vec v(t)\|_{C^1(\mt)}>R\}.
\end{align}
We obtain the following weak-strong uniqueness result which is in the spirit of \cite[Theorem 5.12]{FlaRom}.

\begin{thm}\label{thm_a} The weak-strong uniqueness holds true for the system (\ref{1.1A})--(\ref{1.1B}) in the following sense.
 Let 
\[((\Omega , \FF , (\FF)_{t \geq 0}, \mathbb{P} ),\varrho,\vec m, \mathcal{R}_{\tt{conv}},\mathcal{R}_{\tt{press}},W)
\]
be a dissipative martingale solution in the sense of Definition \ref{E:dfn}. Define the measure $\mathcal P$ in accordance with
\eqref{marklawA}, where $\vec R_0 \in W^{-k,2}(\T,\R^{15})$ is arbitrary. The we have for any $R\in\N$ 
\begin{align}\label{ew:wsulaw}
\mathbb E^{\mathcal P}[\mathbb I_{\{t\leq\mathfrak s_R\}}\eta(\bfxi_t)]=\mathbb E^{\mathcal P}[\mathbb I_{\{t\leq\mathfrak s_R\}}\eta(r(t),r\vec v(t),\mathbf R_0)]
\end{align}
for any $t\geq0$ and any $\eta:\Omega_{\tt Euler}\rightarrow\R$ measurable and bounded.
Here $[r,\vec v]$ and $\mathfrak{s}_R$ are given above in \eqref{eq:0705}.
\end{thm}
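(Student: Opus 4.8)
The plan is to adapt the relative energy (relative entropy) method for measure-valued solutions (as in \cite{BDS,GSWW}, and \cite{HKS} for the stochastic-forcing case), the decisive simplification being that transport noise leaves the relative energy invariant, so that the key estimate becomes essentially deterministic, exactly as the authors announce in the introduction. Recall from the paragraph preceding the theorem that, with $\mathcal P$ given by \eqref{marklawA}, the martingale representation theorem (as in the proof of Proposition \ref{equdfns}) furnishes a stochastic basis $(\Omega\times\tilde\Omega,\mathfrak B\otimes\tilde{\mathfrak B},(\mathfrak B_t\otimes\tilde{\mathfrak B}_t)_{t\geq0},\mathcal P\otimes\tilde{\mathcal P})$ and a $K$-dimensional Wiener process $W$ with respect to which $\varrho=\xi^1$, $\bfm=\bfxi^2$, $(\mathcal{R}_{\tt{conv}},\mathcal{R}_{\tt{press}})=\partial_t\bfxi^3$ form a dissipative measure-valued solution, and that \cite{ART} provides on the same basis a local strong pathwise solution $[r,\vec v]$ with life span $\mathfrak s$; the stopping time $\mathfrak s_R$ is as in \eqref{eq:0705}, so on $[0,\mathfrak s_R]$ one has $\|\vec v\|_{C^1(\mt)}\leq R$ and $[r,\vec v]$ is classical. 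I would then introduce the relative energy
\[
\mathcal E_{\mathrm{rel}}(t):=\int_{\T}\Big(\tfrac12\varrho\Big|\tfrac{\bfm}{\varrho}-\vec v\Big|^2+P(\varrho)-P'(r)(\varrho-r)-P(r)\Big)(t)\dx+\tfrac12\int_{\T}\dd\,\mathrm{tr}\,\mathcal{R}_{\tt{conv}}(t)+\tfrac{1}{\gamma-1}\int_{\T}\dd\,\mathrm{tr}\,\mathcal{R}_{\tt{press}}(t),
\]
with $P(\varrho)=\tfrac{a}{\gamma-1}\varrho^\gamma$; it satisfies $\mathcal E_{\mathrm{rel}}\geq0$, and $\mathcal E_{\mathrm{rel}}(t)=0$ forces $\varrho(t)=r(t)$, $\bfm(t)=r(t)\vec v(t)$ and $\mathcal{R}_{\tt{conv}}(t)=\mathcal{R}_{\tt{press}}(t)=0$.

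The core step is the relative energy inequality. I would combine the deterministic energy inequality of Definition \ref{E:dfn}\,(i) for $(\varrho,\bfm,\mathcal{R}_{\tt{conv}},\mathcal{R}_{\tt{press}})$ with the It\^o expansions of $t\mapsto-\int_{\T}\bfm\cdot\vec v\dx$, $t\mapsto\tfrac12\int_{\T}\varrho|\vec v|^2\dx$ and $t\mapsto\int_{\T}\big(p(r)-P'(r)\varrho\big)\dx$, obtained by testing \eqref{eq:cont}--\eqref{eq:mcxs} against the (random, time-dependent, spatially smooth) functions $\vec v$, $\tfrac12|\vec v|^2$, $P'(r)$, using the strong-solution equations of Definition \ref{def:compstrong} for the $r$-dependence, and applying the It\^o product rule. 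Since $(\varrho,\bfm)$ and $(r,r\vec v)$ are transported by the \emph{same} solenoidal fields $\bfsigma_k$, so that $\vec v$ carries the noise coefficient $\bfsigma_k\cdot\nabla\vec v$, and since $\int_{\T}g\,\Div(\bfsigma_k\otimes\bfsigma_k\nabla f)\dx=-\int_{\T}(\bfsigma_k\cdot\nabla f)(\bfsigma_k\cdot\nabla g)\dx$ for $\Div\bfsigma_k=0$, all stochastic integrals generated by the cross terms collapse to zero and every It\^o-correction term is cancelled exactly by the matching quadratic- and cross-variation term; this is the relative-energy counterpart of the deterministic energy balance of Definition \ref{E:dfn}\,(i). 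What remains is the classical deterministic relative energy inequality: $\mathcal P\otimes\tilde{\mathcal P}$-a.s., for all $t\geq0$,
\[
\mathcal E_{\mathrm{rel}}(t\wedge\mathfrak s_R)\leq\mathcal E_{\mathrm{rel}}(0)+\int_0^{t\wedge\mathfrak s_R}\mathcal D(s)\ds,
\]
where $\mathcal D(s)$ collects terms of the types $\int_{\T}\varrho(\tfrac{\bfm}{\varrho}-\vec v)\otimes(\tfrac{\bfm}{\varrho}-\vec v):\nabla\vec v\dx$, $\int_{\T}\nabla\vec v:\dd\mathcal{R}_{\tt{conv}}$, $\int_{\T}\Div\vec v\,\dd\mathcal{R}_{\tt{press}}$ and a pressure remainder, each bounded by $c(\|\vec v\|_{C^1(\mt)})\,\mathcal E_{\mathrm{rel}}(s)\leq c(R)\,\mathcal E_{\mathrm{rel}}(s)$ on $[0,\mathfrak s_R]$ (the pressure part using convexity of $P$). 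Since the initial data agree and the energy inequality excludes an initial concentration, $\mathcal E_{\mathrm{rel}}(0)=0$.

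A pathwise Gronwall argument then gives $\mathcal E_{\mathrm{rel}}(t\wedge\mathfrak s_R)=0$ for all $t\geq0$, $\mathcal P\otimes\tilde{\mathcal P}$-a.s.; in particular $\varrho(t)=r(t)$ and $\bfm(t)=r(t)\vec v(t)$ on $\{t\leq\mathfrak s_R\}$, and a Fubini argument yields $\mathcal{R}_{\tt{conv}}=\mathcal{R}_{\tt{press}}=0$ for a.e.\ $s\in[0,\mathfrak s_R]$ a.s., whence $\bfxi^3_t=\vec R_0+\int_0^t(\mathcal{R}_{\tt{conv}},\mathcal{R}_{\tt{press}})\ds=\vec R_0$ on $\{t\leq\mathfrak s_R\}$. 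Consequently $\mathbb I_{\{t\leq\mathfrak s_R\}}\bfxi_t=\mathbb I_{\{t\leq\mathfrak s_R\}}\big(r(t),r(t)\vec v(t),\vec R_0\big)$ $\mathcal P\otimes\tilde{\mathcal P}$-a.s., and applying a bounded measurable $\eta$ and taking expectations gives \eqref{ew:wsulaw}, since the canonical process $\bfxi$ has law $\mathcal P=\LL[\varrho,\bfm,\vec R_0+\int_0^\cdot(\mathcal{R}_{\tt{conv}},\mathcal{R}_{\tt{press}})\ds]$.

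I expect the main obstacle to be the rigorous justification of the It\^o expansion of $\mathcal E_{\mathrm{rel}}$: the measure-valued solution is only weakly continuous in time and $\mathcal{R}_{\tt{conv}},\mathcal{R}_{\tt{press}}$ are merely $L^\infty$-in-time equivalence classes of measures, so one must first regularise in time (or test the energy balance against $\psi$-approximations of $\mathbb I_{[0,t]}$) and pass to the limit, while carefully bookkeeping all transport-noise contributions to certify the announced cancellation. The pressure remainder near vacuum when $1<\gamma<2$ (which may require enlarging $\mathfrak s_R$ by a lower bound on $r$), the adaptedness and measurability of $[r,\vec v]$ and $\mathfrak s_R$ on the extended basis, and the slight abuse of writing $\mathcal P$ for $\mathcal P\otimes\tilde{\mathcal P}$ in \eqref{ew:wsulaw}, are further technical points, handled as in \cite{BDS,HKS} and \cite[Theorem 5.12]{FlaRom}.
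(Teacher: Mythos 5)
Your proposal follows the paper's proof essentially verbatim: you build the same relative-energy functional (with the trace of the defect measures included), compute $\dd\int\bfm\cdot\vec v$, $\dd\int\tfrac12\varrho|\vec v|^2$ and the pressure-potential term via the generalised It\^o formula, observe that all Stratonovich integrals vanish because the solenoidal transport noise is energy-conservative, add the energy inequality, and close by bounding the remainder by $c(R)\,\mathcal E_{\mathrm{rel}}$ (using the maximum principle for $r$ on $[0,\mathfrak s_R]$) and applying Gronwall. Your concluding paragraph, spelling out how $\mathcal E_{\mathrm{rel}}=0$ forces $(\varrho,\bfm,\mathcal R_{\tt conv},\mathcal R_{\tt press})=(r,r\vec v,0,0)$ and hence $\bfxi_t=(r(t),r\vec v(t),\vec R_0)$ on $\{t\leq\mathfrak s_R\}$ before taking expectations, merely makes explicit a step the paper leaves implicit.
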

\begin{proof}
The proof is based on the relative energy
functional
\begin{align}\label{relative}
\begin{aligned}
\mathcal{K}\big({\bfxi}_t\big|r(t),\vec v(t)\big) &= \frac{1}{2}\int_{\T}\xi^1_t\Big|\frac{\bfxi^2_t}{\xi_t^1} -\vec v(t)\Big|^2\, \dd x+\int_{\T}\big(P(\xi^1)-P(r)+P'(r)(r-\xi^1))\,\dd x\\
&+ \frac{1}{2}\int_{\T}\dd (\mathrm{tr}\partial_t\boldsymbol{\xi}^3_{\tt{conv}})(t) +\tfrac{a}{\gamma-1}\int_{\T}\dd(\partial_t{\xi}^3_{\tt{press}})(t)
\end{aligned}
\end{align}
for $t\in[0,\mathfrak s]$, where $P(z)=\frac{a}{\gamma-1}z^\gamma$ denotes the pressure potential.

To compute $\dd \intTor{ \bfm \cdot \vc{v} }$ we apply a variant of the generalised It\^{o}-type formula from \cite[Theorem A.4.1]{BFHbook} obtaining
\begin{equation} \label{I1}
\begin{split}
\dd \left( \intTor{ \bfxi^2 \cdot \vec v } \right) &= \left( \intTor{ \left[ \bfxi^2 \cdot\left(-(\vec v\cdot\nabla)\vec v-\tfrac{1}{r}\nabla p(r)+ \nabla \vec v  : \frac{\bfxi^2\otimes\bfxi^2}{\xi^1} \right) + \Div \vec v\, p(\xi^1)  \right] } \right)  {\rm d}t \\
&-\int_{\T}\nabla \vec v:\dd (\partial_t\boldsymbol{\xi}_{\tt{conv}}^3) \dd t-\int_{\T}\mathrm{div} \vec v\,\dd (\partial_t{\xi}_{\tt{press}}^3) \dd t + \dd M_1  ,
\end{split}
\end{equation}
where
\[
M_1(t) = \sum_{k=1}^K\bigg(-\int_0^t \intTor{ \bfsigma_k\otimes\bfxi^2:\nabla\vc{v}} \,\circ\dd W_k + \int_0^t \intTor{ \bfxi^2 \cdot (\bfsigma_k\cdot\nabla) \vc{v} } \,\circ\dd W_k\bigg)=0.
\]
Similarly, we compute
\begin{equation} \label{I2}
\begin{split}
\dd \left( \intTor{ \frac{1}{2} \xi^1 |\vc{v}|^2 } \right) &=
\intTor{ \bfxi^2 \cdot \Grad {\bf v} \cdot {\bf v} }  \,{\rm d}t-  \intTor{ \xi^1 {\bf v} \cdot  ((\vec v\cdot\nabla)\vec v+\tfrac{1}{r}\nabla p(r)) } \,{\rm d}t
\\&+ {\rm d}M_2,
\end{split}
\end{equation}
\[
M_2 = \sum_{k=1}^K\bigg(\int_0^t \intTor{ \xi^1 \vc{v} \cdot (\bfsigma_k\cdot\nabla)\vec v } \,\circ \dd W_k-\frac{1}{2}\int_0^t \intTor{ \xi^1 \bfsigma_k\cdot\nabla|\vec v|^2 } \,\circ \dd W_k\bigg)=0,
\]
\begin{equation} \label{I3}
{\rm d} \left( \intTor{ \left[ P'(r) r - P(r) \right] } \right)
= -\intTor{ p'(r) \Div(r\vec v) } \ {\rm d}t  + \dd M_3,
\end{equation}
\[
M_3 = \sum_{k=1}^K\int_0^t \intTor{ r\bfsigma_k\cdot\nabla p'(r) } \,\circ\dd W_k=0,
\]
and, finally,
\begin{equation} \label{I4}
\begin{split}
{\rm d} \left( \intTor{ \xi^1 P'(r) } \right) &=
 \intTor{ \xi^1 \Grad P'(r) \cdot \vec v } \ {\rm d}t -
\intTor{ \xi^1 P''(r) \Div(r\vec v)} \ {\rm d}t 
+ \dd M_4,
\end{split}
\end{equation}
\[
M_4(t) = -\sum_{k=1}^K\int_0^t \intTor{ \xi^1 r\bfsigma_k\cdot\nabla P'(r) } \,\circ\dd W_k+\int_0^t \intTor{ \xi^1 P''(r) \nabla r\cdot\bfsigma_k } \,\circ\dd W_k=0.
\]
Finally, we collect and sum the resulting expressions \eqref{I1}--\eqref{I4}, and add the energy balance from Definition \ref{dfnMarkov} (b) to the sum obtaining
\begin{align}\label{REI}
\begin{aligned}
   \mathcal{K}\big(\bfxi\big|r,\vec v\big)&= \,\mathcal{K}\big(\xi^1,\bfxi^2,\bfxi^3\big|r,\vec v\big)(0)+\int_{0}^{\tau}\mathcal{Q}\big(\bfxi\big|r,\vec v\big)\, \dd t
\end{aligned}
\end{align}
$\p$-a.s for all $\tau \in (0,\mathfrak s)$, where
\begin{align*}
\mathcal{Q}\big(\bfxi\big|r,\vec v\big)=
&\int_{\T}\xi^1\left(\frac{\bfxi^2}{\xi^1}-\vec v\right)\cdot\nabla\vec v\cdot \left(\vec v-\frac{\bfxi^2}{\xi^1}\right)\,\dd x\nonumber\\
&+\int_{\T}\Big[-\frac{1}{r}\nabla p(r)\cdot(\xi^1\vec v) -p(\xi^1)\mathrm{div}\vec v\Big] \, \dd x  \\
&+\int_{\T} \left(\left(\frac{\xi^1}{r}-1\right)p'(r)\Div(r\vec v)\right)\,\dd x\nonumber\\
&-\int_{\T}\nabla \vec v:\dd (\partial_t\boldsymbol{\xi}_{\tt{conv}}^3) \dd t-\int_{\T}\mathrm{div} \vec v\,\dd (\partial_t{\xi}_{\tt{press}}^3) \dd t\\
&=\int_{\T}\xi^1\left(\frac{\bfxi^2}{\xi^1}-\vec v\right)\cdot\nabla\vec v\cdot \left(\vec v-\frac{\bfxi^2}{\xi^1}\right)\,\dd x\nonumber\\
&+\int_{\T}\big(P(\xi^1)-P(r)+P'(r)(r-\xi^1))\,\dd x\\
&-\int_{\T}\nabla \vec v:\dd (\partial_t\boldsymbol{\xi}_{\tt{conv}}^3) \dd t-\int_{\T}\mathrm{div} \vec v\,\dd (\partial_t{\xi}_{\tt{press}}^3) \dd t.
\end{align*}
Finally, we note that the standard maximum principle for the continuity equation still applies despite the transport noise. This yields that $r$ can be bounded from above and below in terms of $\|\nabla\vec v\|_{C^1(\mt)}$ and thus
$\mathcal{Q}\big(\bfxi\big|r,\vec v\big)\leq\,c(R)\mathcal{K}\big(\bfxi\big|r,\vec v\big)$ for $\tau \in (0,\mathfrak s_R)$.
Hence we obtain $\mathcal{K}\big(\bfxi\big|r,\vec v\big)=0$  for $\tau \in (0,\mathfrak s_R)$ from Gronwall's lemma.
\end{proof}

\subsection{The main theorem}
 In this section we state and prove the existence of a strong Markov selection to the complete stochastic Euler system (\ref{1.1}). Let $y\in Y$ be an admissible initial datum. We denote by $\PP_{y}$ a solution to the martingale problem associated with (\ref{1.1}) starting on $y$ at time $t=0$; that is, the marginal of $\PP_y$ at $t=0$ is $\delta_y$. 
%
 

 \begin{thm}\label{Mselection}
Let $\bfsigma_k \in C^\infty (\TN, R^N)$, ${\rm div}\bfsigma_k = 0$, $k=1,\dots,K$, and let $\gamma > \frac{N}{2}$, $N= 2,3$ be given. Then there exists a family $\{\PP_y\}_{y\in Y}$ of solutions to the martingale problem associated to (\ref{1.1}) in the sense of Definition \ref{dfnMarkov} with the strong Markov property (as defined in Definition \ref{almostMark})
 \end{thm}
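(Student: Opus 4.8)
The plan is to derive Theorem~\ref{Mselection} from the abstract strong Markov selection principle, Theorem~\ref{Mthm}, applied with $X=W^{-k,2}(\T)$ and with the Polish space $F$ and the Borel subset $Y$ introduced above. For $y\in Y$ I would set $\mathcal C(y)$ to be the collection of all $\PP\in\mathrm{Prob}_Y[\Omega_{\tt Euler}]$ that solve the martingale problem of Definition~\ref{dfnMarkov} and whose time-$0$ marginal is $\delta_y$. By Theorem~\ref{Mthm} it then suffices to prove that $\{\mathcal C(y)\}_{y\in Y}$ is a pre-Markov family in the sense of Definition~\ref{almostMark} with nonempty, convex, compact values; the strong Markov property of Definition~\ref{almsure} is then automatic.

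The first block of work is to show that each $\mathcal C(y)$ is nonempty, convex and compact and that $y\mapsto\mathcal C(y)$ is measurable. Nonemptiness follows by combining Theorem~\ref{thm:mainEuler} with Proposition~\ref{equdfns}(1) (taking $z=\mathscr E_0$); that the resulting law is concentrated on $Y$ uses conservation of total mass — the transport terms are in divergence form and integrate to zero on $\T$ — together with the uniform energy bound. Convexity is immediate from the defining relations: under $\lambda\PP_1+(1-\lambda)\PP_2$ the processes $\mathscr M^1(\varphi)$, $\mathscr M^2(\bfphi)$ remain martingales, and since $[\mathscr M^i]^2$ and $\mathscr M^1\mathscr M^2$ minus their prescribed compensators are martingales under each $\PP_j$, the quadratic and cross variations are unchanged, while the energy inequality, holding almost surely under each $\PP_j$, holds almost surely under the mixture. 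For compactness, tightness follows from the energy bound exactly as in Section~\ref{mvsproof} (the estimates \eqref{eq:2512}--\eqref{estimate'} depend only on $\mathscr E_0$), and closedness follows from the same stochastic compactness argument used in Section~\ref{mvsproof} to prove Theorem~\ref{thm:mainEuler}: a weak limit of elements of $\mathcal C(y)$ again solves the martingale problem with initial datum $y$ (the limiting defect measures can only grow, which is compatible with the energy inequality, and — that inequality containing no stochastic integral — its stability under the limit is straightforward). Measurability of $y\mapsto\mathcal C(y)$ is then routine.

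The second block is the pre-Markov property itself. For the disintegration property, given $\PP\in\mathcal C(y)$ and a finite $(\B_t)$-stopping time $\mathfrak t$, Theorem~\ref{Dis} supplies the regular conditional laws $\PP|_{\B_{\mathfrak t}}^\omega$; I would check that their time shift by $\Phi_{-\mathfrak t(\omega)}$ solves the martingale problem started at $\xi_{\mathfrak t(\omega)}(\omega)$. The martingale and (co)variation identities for $\mathscr M^1,\mathscr M^2$ restrict to $[\mathfrak t,\infty)$ by optional sampling, precisely as in \cite{STVAR,FlaRom,BFHAAP}, and for the energy inequality one passes to the right-continuous representative of $t\mapsto\mathfrak E_t$ and takes its value (essential right limit) at $\mathfrak t(\omega)$ as the admissible initial energy $z$ of the conditioned solution, which dominates the kinetic-plus-internal energy of $\xi_{\mathfrak t(\omega)}(\omega)$; the crucial simplification is that, the energy balance being purely deterministic, no stochastic correction term has to be carried along — this is exactly the delicate point in \cite{FlaRom}. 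For the reconstruction property, given a $\B_{\mathfrak t}$-measurable family $\omega\mapsto Q_\omega$ of solutions started at $\xi_{\mathfrak t(\omega)}(\omega)$, Theorem~\ref{Rec} produces $\PP\otimes_{\mathfrak t}Q$, and one shows it lies in $\mathcal C(y)$ by splitting test-function increments at $\mathfrak t$: before $\mathfrak t$ one uses that $\PP\otimes_{\mathfrak t}Q$ agrees with $\PP$ on $\B_{\mathfrak t}$, after $\mathfrak t$ one uses the martingale property under $Q_\omega$ together with Theorem~\ref{Rec}(b), and the energy inequality of the glued measure follows from those of $\PP$ and of the $Q_\omega$ since gluing creates no extra energy. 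With Definition~\ref{almostMark}(a)--(b) verified, Theorem~\ref{Mthm} yields the measurable family $\{\PP_y\}_{y\in Y}\subset\mathrm{Prob}_Y[\Omega_{\tt Euler}]$ with $\PP_y\in\mathcal C(y)$ having the strong Markov property.

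I expect the main obstacle to be the closedness of $\mathcal C(y)$ together with the stability of the full martingale-problem structure under conditioning and reconstruction: one must simultaneously pass the defect measures $\vec R$ (only well-defined after time integration) to the limit and restrict them to $[\mathfrak t,\infty)$, verify that the quadratic-variation functionals built from the solenoidal fields $\bfsigma_k$ behave well under these operations, and propagate the admissible energy $z$ correctly through the possible energy jump at $\mathfrak t$. As emphasised in the introduction, the deterministic character of the energy balance removes the principal technical difficulty present in \cite{FlaRom} and its successors, so once this bookkeeping is set up the argument should close.
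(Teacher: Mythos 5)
Your proposal follows essentially the same route as the paper: apply the abstract selection result (Theorem \ref{Mthm}) to the family $\mathcal C(y)$ of martingale-problem solutions, verify nonemptiness/convexity (via Theorem \ref{thm:mainEuler} and Proposition \ref{equdfns}), compactness and measurability (via the stochastic compactness argument of Section \ref{mvsproof} and Theorem 12.1.8 of Stroock--Varadhan), and then the disintegration and reconstruction properties of Definition \ref{almostMark} (via Theorems \ref{Dis} and \ref{Rec}, splitting nullsets per property and using \cite[Theorem 1.2.10]{STVAR} for the martingale parts), with the same key observation that the purely deterministic energy balance allows the admissible initial energy $z$ at the stopping time to be handled without carrying along any stochastic correction. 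The proof is correct and matches the paper's argument.
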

 
 We set $y =(y^1,\vec y^2,\vec y^3) \in Y$ and denote by $\CC(y)$ the set of probability laws $\PP_{y}\in \mathrm{Prob}[\Omega_{{\tt Euler}}]$ solving the martingale problem associated to (\ref{1.1}) with initial condition $(y^1,\vec y^2)$. The proof of Theorem \ref{Mselection} follows from applying the abstract result of Theorem \ref{Mthm}. In particular, we show that the family $\{\CC(y)\}_{y\in Y}$ of solutions to the martingale problem satisfies the disintegration and reconstruction properties in Definition \ref{almostMark}.
 
 \begin{lem}
 For each $y =(y^1,\vec y^2,\vec y^3) \in Y$. The set $\CC(y)$ is \textit{non-empty} and \textit{convex}. Furthermore, for every $\PP\in \CC(y)$, the marginal at every time $t\in (0,\infty)$ is supported on $Y$.
 \end{lem}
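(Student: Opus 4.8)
The plan is to establish the three assertions --- non-emptiness of $\CC(y)$, its convexity, and the support of the time-$t$ marginal --- separately. \emph{Non-emptiness.} Since $y=(y^1,\vec y^2,\vec y^3)\in Y$, we have $y^1\in L^\gamma(\mt)$ with $y^1\geq 0$ a.e.\ and $y^1\not\equiv 0$, $\vec y^2\in L^{\frac{2\gamma}{\gamma+1}}(\mt)$, and $(y^1)^{-1/2}\vec y^2\in L^2(\mt,\R^N)$, which are exactly the hypotheses of Theorem \ref{thm:mainEuler} for the deterministic datum $(\varrho_0,\bfm_0):=(y^1,\vec y^2)$. That theorem yields a dissipative measure-valued martingale solution, and part (1) of Proposition \ref{equdfns}, applied with the deterministic choice $\vec R_0=\vec y^3$, turns it into a law $\PP\in\mathrm{Prob}[\Omega_{\tt Euler}]$ solving the martingale problem of Definition \ref{dfnMarkov}; since all data are deterministic, the marginal of $\PP$ at $t=0$ equals $\delta_y$, so $\PP\in\CC(y)$.

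\emph{Convexity.} For $\PP_0,\PP_1\in\CC(y)$ and $\lambda\in[0,1]$ I would check that $\PP:=\lambda\PP_0+(1-\lambda)\PP_1$ again satisfies every clause of Definition \ref{dfnMarkov} with initial condition $(y^1,\vec y^2)$. The support statements in (a) and the normalisation of the $t=0$ marginal have the form $\PP_i(B)=1$ for Borel $B$ and are therefore inherited by $\PP$; the energy inequality (b), valid $\PP_i$-a.s.\ with a constant $z_i\geq\mathscr E_0$, is valid $\PP_0$- and $\PP_1$-a.s.\ --- hence $\PP$-a.s.\ --- with $z:=\max\{z_0,z_1\}\geq\mathscr E_0$, since enlarging the constant only weakens the inequality; and the martingale conditions (c)--(d), together with their prescribed quadratic and cross variations, amount to the vanishing of expectations $\E^{\PP_i}\!\big[h(\bfxi|_{[0,s]})\,\Delta_{s,t}\big]$ for the increments $\Delta_{s,t}$ of $\mathscr M^1(\varphi)$, $\mathscr M^2(\bfphi)$ and of their compensated products, plus square-integrability of $\mathscr M^1(\varphi)_t$ and $\mathscr M^2(\bfphi)_t$, all of which are affine (resp.\ convex) in the measure and so pass to $\PP$.

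\emph{The marginal is supported on $Y$.} Fix $\PP\in\CC(y)$ and $t>0$; I would show that $\PP$-a.s.\ $\xi^1_t\geq 0$, $\xi^1_t\not\equiv 0$, and $\int_\mt|\bfxi^2_t|^2/\xi^1_t\,\dx<\infty$, which are the three conditions cutting out $Y$. Nonnegativity is part of Definition \ref{dfnMarkov}(a). For non-triviality I would test $\mathscr M^1(\varphi)$ of (c) with $\varphi\equiv 1$: all gradient terms drop out, $\mathscr M^1(1)$ reduces to the continuous process $r\mapsto\int_\mt\xi^1_r\,\dx-\int_\mt\xi^1_0\,\dx$, and its quadratic variation vanishes identically; since a continuous martingale starting at $0$ with zero quadratic variation is identically $0$, we obtain $\int_\mt\xi^1_t\,\dx=\int_\mt y^1\,\dx>0$ $\PP$-a.s., so $\xi^1_t\not\equiv 0$. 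For the finite-energy condition I would use (b): along $\PP$-a.a.\ trajectories there is $z<\infty$ with $-\int_0^\infty\partial_t\psi\,\mathfrak E_t\dt\leq\psi(0)z$ for all nonnegative $\psi\in C_c^\infty([0,\infty))$, which forces $r\mapsto\mathfrak E_r$ to coincide, off a Lebesgue-null set, with a non-increasing function bounded by $z$; as the remaining contributions to $\mathfrak E_r$ are nonnegative, $\int_\mt|\bfxi^2_r|^2/\xi^1_r\,\dx\leq 2z$ for a.e.\ $r>0$. Choosing $r_n\to t$ inside this full-measure set, using the weak continuity $\xi^1_{r_n}\rightharpoonup\xi^1_t$ in $L^\gamma(\mt)$ and $\bfxi^2_{r_n}\rightharpoonup\bfxi^2_t$ in $L^{\frac{2\gamma}{\gamma+1}}(\mt)$ from Definition \ref{dfnMarkov}(a) together with the weak lower semicontinuity of the convex functional $(\varrho,\vec m)\mapsto\int_\mt|\vec m|^2/\varrho\,\dx$ on $L^\gamma(\mt)\times L^{\frac{2\gamma}{\gamma+1}}(\mt)$, we arrive at $\int_\mt|\bfxi^2_t|^2/\xi^1_t\,\dx\leq 2z<\infty$ $\PP$-a.s.; the three events being Borel (cf.\ \cite{BFHAAP}), $\PP(\bfxi_t\in Y)=1$.

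I expect the last step to be the main obstacle: the energy bound is only available for Lebesgue-a.e.\ time, whereas the statement concerns one \emph{fixed} $t>0$, so one must bridge the gap by combining the monotone-in-time structure built into the energy inequality with the weak lower semicontinuity of the kinetic energy along the weakly time-continuous trajectories $r\mapsto(\xi^1_r,\bfxi^2_r)$. The remaining parts follow directly from Theorem \ref{thm:mainEuler} and Proposition \ref{equdfns}, or are routine convexity and measurability bookkeeping.
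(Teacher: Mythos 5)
Your proof is correct and follows the same route as the paper: existence via Theorem \ref{thm:mainEuler} plus Proposition \ref{equdfns} for non-emptiness, linearity/affinity of the defining conditions for convexity, and the energy inequality for the support statement. The paper's own proof is extremely terse (and in fact contains a mislabelling, referring to ``property (f)'' for the energy inequality, which is actually (i)), so you have usefully filled in two gaps that the paper leaves implicit: first, the non-triviality $\xi^1_t\not\equiv 0$ does not follow from the energy inequality alone, and your derivation of mass conservation by testing $\mathscr M^1$ with $\varphi\equiv 1$ (a continuous martingale with vanishing quadratic variation, hence constant) is exactly the right additional ingredient; second, you correctly address the gap between the a.e.-in-time energy bound and a fixed time $t>0$ by combining the weak time-continuity from Definition \ref{dfnMarkov}(a) with the weak lower semicontinuity of the kinetic-energy functional. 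The choice $z=\max\{z_0,z_1\}$ for the energy constant under convex combinations is also a clean way to handle the point the paper waves at.
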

 
 \begin{proof}
 Assuming $y\in Y$, an application of Theorem \ref{thm:main} yields existence of a martingale solution to the problem (\ref{1.1}) in the sense of Definition \ref{E:dfn}. Consequently, by Proposition \ref{equdfns} we infer that for each $y\in Y$ the set $\CC(y)$ is non-empty. For some $\lambda \in (0,1)$, let $\PP_1,\PP_2 \in \CC(y)$ be such that $\PP=\lambda \PP_1+(1-\lambda)\PP_2$. Then convexity follows from noting that the properties from Definition \ref{E:dfn} involve integration with respect to the elements of $\CC(y)$. In view of Definition \ref{E:dfn} property (f) (i.e., the energy inequality), the marginal $\PP\in \CC(y)$ at every $t\in (0,\infty)$ is supported in $Y$.
 \end{proof}
 
 For compactness we consider the following Lemma.
 \begin{lem}
 Let $y\in Y$. Then $\CC(y)$ is a compact set and the map $\CC:Y\to \mathrm{Comp}(\mathrm{Prob}[\Omega_{{\tt Euler}}])$ is Borel measurable.
 
 \end{lem}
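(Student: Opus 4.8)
The plan is to prove compactness of $\CC(y)$ by establishing separately that it is \emph{tight} — hence relatively compact in $\mathrm{Prob}[\Omega_{{\tt Euler}}]$ — and \emph{closed}, and then to obtain Borel measurability of the set-valued map $y\mapsto\CC(y)$ from the Borel character of its graph together with the compactness of its values. For tightness, the key observation is that every element of $\CC(y)$ satisfies the energy inequality of Definition \ref{dfnMarkov}(b) and has initial marginal $\delta_y$, and that all the a priori estimates of Section \ref{mvsproof} — the energy bounds \eqref{eq:2512} as well as the fractional-in-time estimates \eqref{estimate} and \eqref{estimate'} — depend on the solution only through this energy inequality (whose right-hand side, for solutions issued from $y$, is controlled by $\mathscr E_0$) and through the fixed regularity of the fields $\boldsymbol{\sigma}_k$. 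Hence these bounds hold uniformly over $\CC(y)$, so the laws in $\CC(y)$ form a tight family on the path space $\mathfrak X$ of Section \ref{mvsproof}, and in particular on $\Omega_{{\tt Euler}}$; relative compactness follows from Prokhorov's theorem.

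Closedness is the substantial point, and I expect it to be the main obstacle. Given $\PP_n\in\CC(y)$ with $\PP_n\rightharpoonup\PP$, I would invoke Jakubowski's version of the Skorokhod representation theorem exactly as in Proposition \ref{skorokhod} to pass, along a subsequence and on a new probability space, to $\tilde{\p}$-a.s.\ convergent representatives with laws $\PP_n$, and then pass to the limit in every item of Definition \ref{dfnMarkov}. The support properties in (a) are preserved since the relevant sets are weakly closed and the corresponding norms weakly lower semicontinuous; the energy inequality (b) passes with the correct sign by weak lower semicontinuity of the convex functional $\mathfrak E_t$ and Fatou's lemma, the limiting constant being $\limsup_n z_n$, which is finite by the uniform energy bound. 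In the martingale relations (c), (d) the linear terms and the It\^o--Stratonovich corrections converge by continuity, while the convective term $\boldsymbol{\xi}^2\otimes\boldsymbol{\xi}^2/\xi^1$ and the pressure $p(\xi^1)$ are handled by introducing the concentration defect measures $\mathcal{R}_{\tt{conv}}$, $\mathcal{R}_{\tt{press}}$ through the convexity and lower-semicontinuity argument of Section \ref{mvsproof}, absorbing them into $\partial_t\boldsymbol{\xi}^3$; the martingale property and the prescribed quadratic and cross variations are then transferred to the limit by passing to the limit in expectation identities of the type \eqref{eq:hmA}--\eqref{eq:hmC}, using the uniform moment bounds for uniform integrability and \cite[Lemma 2.1]{Debussche} for the stochastic integrals. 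This gives $\PP\in\CC(y)$, so $\CC(y)$ is closed, hence compact.

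For the measurability statement I would equip $\mathrm{Comp}(\mathrm{Prob}[\Omega_{{\tt Euler}}])$ with the Hausdorff metric and check that the graph $\{(y,\PP):\PP\in\CC(y)\}\subset Y\times\mathrm{Prob}[\Omega_{{\tt Euler}}]$ is Borel. This is straightforward: each of the conditions (a)--(d) in Definition \ref{dfnMarkov} cuts out a Borel subset of $\mathrm{Prob}[\Omega_{{\tt Euler}}]$, and the initial-marginal constraint $\PP\circ\boldsymbol{\xi}_0^{-1}=\delta_y$ is jointly Borel in $(y,\PP)$. Since $Y$ is Polish and the sections $\CC(y)$ are compact by the first part, Borel measurability of $y\mapsto\CC(y)$ follows from the standard measurable-selection argument for compact-valued maps with Borel graph, as in \cite{BFHAAP}. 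In summary, only the closedness step requires genuine work — it is a repetition of the compactness analysis behind Theorem \ref{thm:mainEuler} — whereas tightness and measurability are routine.
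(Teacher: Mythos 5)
Your proposal is correct in outline and arrives at both conclusions, but it takes a genuinely different, somewhat more roundabout route than the paper, principally in how measurability is obtained. The paper proves a single stronger sequential statement: for $y_n\to y$ in $(Y,d_F)$ and $\PP_n\in\CC(y_n)$, a subsequence of $(\PP_n)$ converges weakly to some $\PP\in\CC(y)$. Taking $y_n\equiv y$ this yields sequential compactness of $\CC(y)$ (your tightness and closedness steps are folded into a single pass through the compactness argument of Theorem \ref{thm:mainEuler}); and for varying $y_n$ it is exactly the hypothesis of Theorem 12.1.8 in \cite{STVAR}, which then delivers Borel measurability of $y\mapsto\CC(y)$ directly, with no descriptive set theory needed. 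By contrast, your measurability argument goes through the Borel graph plus a selection theorem for compact-valued multifunctions. That is a legitimate alternative, but it is not ``straightforward'' as you claim: conditions (c)--(d) of Definition \ref{dfnMarkov} quantify over all test functions $\varphi$, $\bfphi$ and all pairs $s<t$, condition (b) carries an existential quantifier in $z$, and you must reduce all of these to countable families (by density and separability) before you can assert the graph is Borel rather than merely analytic; you would also need to justify why a Borel graph with compact sections gives measurability into the Hausdorff-metric hyperspace. The paper's convergence-with-varying-initial-data argument avoids all of this. One further small point worth noting explicitly in your tightness step: the uniform energy bound over $\CC(y)$ is only immediate if $z$ in Definition \ref{dfnMarkov}(b) is pinned to $\mathscr E_0$ (as in Proposition \ref{equdfns}(2)); for the varying-$y_n$ argument the paper implicitly uses that $\mathscr E_0(y_n)\to\mathscr E_0(y)$ under $d_F$, which your fixed-$y$ formulation does not make visible.
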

 \begin{proof}
Measurability of the map $y\mapsto \CC(y)$ follows from using Theorem 12.1.8 in \cite{STVAR} for the metric space $(Y,d_F)$. 
 Let $(y_n = (\varrho_n,\vec m_n,\vec R_n))_{n\in \N} \subset Y$ be a sequence converging in $Y$ (to some $y = (\varrho,\vec m,\vec R))$ with respect to the metric $d_F$ in (\ref{metric}). Let $\PP_n \in C(y_n), n\in \N$.
 Compactness can be proved following the lines of the proof of Theorem \ref{thm:mainEuler}. That is, after taking a non-relabelled, $(\PP_n)_{n\in N}$ converges to some $\PP \in \CC(y)$ weakly in $\mathrm{Prob}[\Omega_{\tt Euler}]$. Consequently, we can argue as in the proof of Theorem \ref{thm:mainEuler} to show that $\PP$ is a solution to the martingale problem with initial law $\delta_y$. Therefore, $\PP\in \CC(y)$ as required.
 \end{proof}
 
 Finally, we verify that $\CC(y)$ has the disintegration and reconstruction property in sense of Definition  \ref{almostMark}.
 
 \begin{lem}\label{dis}
 The family $\{\CC(y)\}_{y\in Y}$ satisfies the disintegration property
 of Definition \ref{almostMark}.
 \end{lem}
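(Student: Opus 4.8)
The plan is to feed the abstract disintegration theorem (Theorem~\ref{Dis}) into the martingale characterisation of Definition~\ref{dfnMarkov} and to exploit that this characterisation is stable under conditioning and time shift. Fix $y\in Y$, a measure $\PP\in\CC(y)$ and a finite $(\mathfrak B_t)$-stopping time $\mathfrak t$. Theorem~\ref{Dis} produces a $\mathscr B_{\mathfrak t}$-measurable family $\omega\mapsto\PP|_{\mathscr B_{\mathfrak t}}^\omega\in\mathrm{Prob}[\Omega_{\tt{Euler}}]$ with the properties (a), (b) stated there; composing with the (Borel) time-shift $\Phi_\cdot$ produces the candidate family $\omega\mapsto\PP(\Phi_{\mathfrak t(\omega)}(\cdot)|_{\mathscr B_{\mathfrak t}}^\omega)$, which is $\mathscr B_{\mathfrak t}$-measurable since $\mathfrak t$ and $\Phi_\cdot$ are measurable. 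What remains is to exhibit a $\PP$-nullset $\mathcal N$ such that, for $\omega\notin\mathcal N$, one has $\xi_{\mathfrak t(\omega)}(\omega)\in Y$ and $\PP(\Phi_{\mathfrak t(\omega)}(\cdot)|_{\mathscr B_{\mathfrak t}}^\omega)\in\CC(\xi_{\mathfrak t(\omega)}(\omega))$.

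For the first assertion I would argue as follows. Testing the martingale $\mathscr M^1(\varphi)$ from Definition~\ref{dfnMarkov}(c) with $\varphi\equiv1$ gives a continuous square-integrable martingale with vanishing quadratic variation starting from $0$, hence $\int_{\T}\xi^1_t\dx=\int_{\T}y^1\dx>0$ for all $t$, $\PP$-a.s.; together with $\xi^1\geq0$ this yields the sign and non-triviality constraints defining $Y$ at every time, in particular at $\mathfrak t(\omega)$. Finiteness of the kinetic energy $\int_{\T}|\bfxi^2_{\mathfrak t(\omega)}|^2/\xi^1_{\mathfrak t(\omega)}\dx$ follows from the energy inequality of Definition~\ref{dfnMarkov}(b): $t\mapsto\mathfrak E_t$ is $\PP$-a.s.\ equal, for a.e.\ $t$, to a non-increasing function and hence has one-sided limits everywhere, and by weak-$(*)$ continuity of $(\xi^1,\bfxi^2)$ and convexity — whence weak-$(*)$ lower semicontinuity — of $(\varrho,\bfm)\mapsto\int_{\T}|\bfm|^2/\varrho\dx$, the kinetic energy at $\mathfrak t(\omega)$ is $\PP$-a.s.\ bounded by the essential right limit $\mathfrak E_{\mathfrak t(\omega)+}\leq z<\infty$. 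Thus $\xi_{\mathfrak t(\omega)}(\omega)\in Y$ off a $\PP$-nullset; this simultaneously shows that $\bfxi^3_{\mathfrak t(\omega)}(\omega)\in W^{-k,2}(\T,\R^{N^2+1})$ is an admissible (arbitrary) datum for the $\vec R$-component, consistent with the remark that $\vec R_0$ is superfluous.

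For the second assertion I would verify the items of Definition~\ref{dfnMarkov} for $\PP(\Phi_{\mathfrak t(\omega)}(\cdot)|_{\mathscr B_{\mathfrak t}}^\omega)$ with initial law $\delta_{\xi_{\mathfrak t(\omega)}(\omega)}$. The support and path-regularity conditions (a) are inherited directly, as for $\PP$-a.a.\ $\omega$ the measure $\PP|_{\mathscr B_{\mathfrak t}}^\omega$ is carried by the same path set as $\PP$. For (c) and (d) I would invoke the standard stability of the martingale property under regular conditioning and shifting (cf.\ \cite{STVAR}, and the analogous argument in \cite{BFHAAP}): if $M$ is a $((\mathfrak B_t),\PP)$-martingale then, for $\PP$-a.a.\ $\omega$, the shifted increment $s\mapsto M_{s+\mathfrak t(\omega)}-M_{\mathfrak t(\omega)}$, read off via $\Phi_{\mathfrak t(\omega)}$, is a martingale for $\PP(\Phi_{\mathfrak t(\omega)}(\cdot)|_{\mathscr B_{\mathfrak t}}^\omega)$; applying this to $\mathscr M^1(\varphi)$, $\mathscr M^2(\bfphi)$ and to $[\mathscr M^1(\varphi)]^2$, $[\mathscr M^2(\bfphi)]^2$, $\mathscr M^1(\varphi)\mathscr M^2(\bfphi)$ minus their (continuous, additive) compensators pins down the quadratic and cross variations demanded in (c), (d). The energy inequality (b) is the painless ingredient: it is a purely deterministic, pathwise statement, so it holds $\PP(\Phi_{\mathfrak t(\omega)}(\cdot)|_{\mathscr B_{\mathfrak t}}^\omega)$-a.s., and restricting to times $\geq\mathfrak t(\omega)$ and using the monotonicity of $t\mapsto\mathfrak E_t$ yields the shifted inequality with constant $z'=\mathfrak E_{\mathfrak t(\omega)+}\geq\mathscr E_0(\xi_{\mathfrak t(\omega)}(\omega))$.

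Finally, collecting a single $\PP$-nullset $\mathcal N$ outside of which all of the above holds simultaneously gives precisely property (a) of Definition~\ref{almostMark}. The main obstacle is the transfer of the martingale characterisation in (c), (d) to the regular conditional probabilities: the exceptional null set a priori depends on the test functions $\varphi,\bfphi$, so one must first reduce to a countable determining family (exploiting separability of the relevant test-function spaces and density) before conditioning, so that a single null set serves all tests. By contrast, the energy inequality — the delicate point in the stochastically forced setting of \cite{FlaRom,BFHAAP} — is immediate here precisely because the transport noise is energy conservative.
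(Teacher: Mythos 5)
Your proposal is correct and follows essentially the same route as the paper: apply Theorem~\ref{Dis} (disintegration), then verify that properties (a)--(d) of Definition~\ref{dfnMarkov} transfer to the time-shifted regular conditional probabilities, collecting one nullset per property. The paper carries out step (a) by introducing the path sets $H_{\mathfrak t}$ and $H^{\mathfrak t}$ and exploiting $1=\PP(H_{\mathfrak t}\cap H^{\mathfrak t})=\int_{H_{\mathfrak t}}\PP|_{\mathscr B_{\mathfrak t}}^{\tilde\omega}(H^{\mathfrak t})\,\dd\PP(\tilde\omega)$, step (b) by splitting the pathwise energy inequality at $\mathfrak t$, and steps (c), (d) by invoking \cite[Theorem~1.2.10]{STVAR} on the vector-valued process $(\mathscr M^1_t(\varphi_n),\mathscr M^2_t(\bfphi_n))$ for countable dense families of test functions. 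Your write-up phrases the same three ingredients in slightly different words: the countable-test-function reduction you flag as "the main obstacle" is exactly the paper's passage to dense sequences $(\varphi_n),(\bfphi_n)$, and your observation that the energy balance is pathwise and monotone is precisely what makes the paper's step (b) short.

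One thing you do that the paper leaves implicit is verify $\xi_{\mathfrak t(\omega)}(\omega)\in Y$ directly: you test $\mathscr M^1(\varphi)$ with $\varphi\equiv1$, observe that the quadratic variation vanishes so mass is conserved, and then use weak-$(*)$ lower semicontinuity of the kinetic energy together with the one-sided limit of $\mathfrak E$ at $\mathfrak t(\omega)$. This is a genuine, correct addition — the paper asserts "we are seeking a nullset outside of which (a)--(d) hold with initial datum $\xi_{\mathfrak t(\omega)}(\omega)$" and relies on the reader to see that the initial datum lies in $Y$ by the same reasoning. Your explicit justification is compatible with and slightly more complete than what is written. No gaps.
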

 
 \begin{proof}
 Fix $y\in Y$, $\PP\in \CC(y)$ and let $\mathfrak t$ be $(\B_t)$-stopping time. In view of Theorem \ref{Dis}, we know there exists a family of probability measures;
 \[
 \Omega \ni \tilde{\omega}\mapsto \PP|_{\B_{\mathfrak t}}^{\tilde{\omega}}\in \mathrm{Prob}[\Omega_{\tt Euler}^{[0,\infty)}]
 \]
 such that 
there is a $\mathcal P$-nullset
$\mathcal N$ such that for any $\omega\notin\mathcal N$
\begin{align}\label{ameas}\begin{aligned}
\mathcal P\Big(\Big\{\tilde\omega:\,\mathfrak t(\omega)=\mathfrak t(\tilde\omega),\,\xi(\omega)|_{[0,\mathfrak t(\tilde\omega)]}=\xi(\tilde\omega)|_{[0,\mathfrak t(\tilde\omega)]}\Big\}\Big)=1,\\
     \PP(\omega|_{[0,\mathfrak t]}\in A,\omega|_{[\mathfrak t,\infty)}\in B)= \int_{\tilde{\omega}\in A}\PP|_{\B_{\mathfrak t}}^{\tilde{\omega}}(B)\,\dd \PP(\tilde{\omega}),
\end{aligned}
     \end{align}
for any Borel set $A\subset\mathscr B_{\mathfrak t}$ and any Borel set $B\subset\Omega_{X}^{[0,\infty)}$.
 Here, we want to show that 
there is a $\mathcal P$-nullset
$N$ such that for any $\omega\notin N$
\begin{align*}
\xi_{\mathfrak t(\omega)}(\omega)\in Y,\quad\mathcal P(\Phi_{\mathfrak t(\omega)}(\cdot)|_{\mathscr B_{\mathfrak t}}^\omega)\in \mathcal C(\xi_{\mathfrak t(\omega)}(\omega));
\end{align*}
  Thus we are seeking a $\PP$-nullset $N$ outside of which properties (a)-(d) of Definition \ref{dfnMarkov} hold for $\mathcal P(\Phi_{\mathfrak t(\omega)}(\cdot)|_{\mathscr B_{\mathfrak t}}^\omega)$, where the initial datum is $\xi_{\mathfrak t(\omega)}(\omega)$. To begin with, set $N_a,\ldots,N_d$ to each of the properties (a)-(d) of Definition \ref{dfnMarkov}, respectively, and let $N= N_a\cup\cdots\cup N_d$. Arguing along the lines of Lemma 4.4 in \cite{FlaRom} (see also \cite{BFHAAP} and \cite{Mo}) we have the following observations:
  \begin{itemize}
      \item[(1)] 
First of all, $\mathcal P(\Phi_{\mathfrak t(\omega)}(\cdot)|_{\mathscr B_{\mathfrak t}}^\omega)$ has the correct initial datum $\xi_{\mathfrak t(\omega)}(\omega)$ (outside a nullset) due to \eqref{ameas} above.
Setting
       \begin{align*}
           H_{\mathfrak t}&=\bigg\{\omega \in \Omega:\omega|_{[0,\mathfrak t(\omega)]} \in C([0,,\mathfrak t(\omega)];L^{\gamma}(\T))\times C([0,,\mathfrak t(\omega)];L^{\frac{2\gamma}{\gamma+1}}(\T))\times\\
          &\qquad\qquad\qquad\qquad\qquad\times W_{\text{weak-*}}^{1,\infty}(0,,\mathfrak t(\omega);\mathcal{M}^{+}(\T;R^{N^2+1}))\bigg\}\\
            H^{\mathfrak t}&=\bigg\{\omega \in \Omega:\omega|_{[\mathfrak t(\omega),\infty)} \in C([\mathfrak t(\omega),\infty);L^{\gamma}(\T))\times C([\mathfrak t(\omega),\infty);L^{\frac{2\gamma}{\gamma+1}}(\T))\times\\&\qquad\qquad\qquad\qquad\qquad\times W_{\text{weak-*}}^{1,\infty}(\mathfrak t(\omega),\infty;\mathcal{M}^{+}(\T;R^{N^2+1}))\bigg\},
      \end{align*}
      in view of property (a)  in Definition \ref{dfnMarkov} for $\PP$ we obtain
      \[1= \PP(H_{\mathfrak t}\cap H^{\mathfrak t}) = \int_{\tilde\omega \in H_{\mathfrak t}}\PP|_{\B_{\mathfrak t}}^{\tilde\omega}(H^{\mathfrak t})\,\dd \PP(\tilde{\omega})
.\]
      Therefore, there is a $\PP$-nullset $N_a$ (which belongs to $\mathfrak B_{\mathfrak t}$) such that $\PP|_{\B_T}^{\tilde\omega}(H^{\mathfrak t}) =1$ holds for $\tilde{\omega}\notin N_a$.
      \item[(2)] For property (b)
we split the energy balance as follows:
      \begin{gather}
 \mathfrak E_\tau\leq\mathfrak E_s=z\quad \text{for a.a.} \,\,0\leq s\leq  \tau\leq \mathfrak t,\label{ceqA}\\
 \mathfrak E_\tau\leq \mathfrak E_s=z\quad \text{for a.a.}\,\, \mathfrak t\leq s\leq  \tau<\infty,\label{ceqB}
\end{gather}
and consider the sets
\begin{gather*}
    \mathfrak{A}_{\mathfrak t}=\{\omega \in \Omega:\omega|_{[0,\mathfrak t(\omega)]}\,\text{satisfies (\ref{ceqA})} \}\\
    \mathfrak{A}^{\mathfrak t}=\{\omega \in \Omega:\omega|_{[\mathfrak t(\omega),\infty)}\,\text{satisfies (\ref{ceqB})} \}.
\end{gather*}
    As the property (b) holds for $\PP$, arguing similarly as in proof of (a) yields a nullset $N_b$.
    \item[(3)]
The existence of $\PP$-nullsets $N_c$ and $N_d$
such that $\mathcal P(\Phi_{\mathfrak t(\omega)}(\cdot)|_{\mathscr B_{\mathfrak t}}^\omega)$ satisfies (c) and (d) outside of $N_c$ respectively $N_d$ 
 follows  from \cite[Theorem 1.2.10]{STVAR} applied to the $R^{N+1}$-valued process
\begin{align}\label{eq:0610}
[\omega,t]\mapsto \begin{pmatrix}\mathscr M^1_t(\varphi_n)\\\mathscr M^2_t(\boldsymbol{\varphi}_n)
\end{pmatrix}
\end{align}
for dense subsets $(\varphi_n)\subset C^\infty(\mt)$ and $(\boldsymbol{\varphi}_n)\subset C^\infty(\mt,R^N)$, respectively. 
  \end{itemize}
 Choosing $N= N_a\cup\cdots\cup N_d$ completes the proof.
 \end{proof}
 
 \begin{lem}\label{ris}
 The family $\{\CC(y)\}_{y\in Y}$ satisfies the reconstruction property
 of Definition \ref{almostMark}.
 \end{lem}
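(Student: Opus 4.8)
Fix $y\in Y$, $\PP\in\CC(y)$, a finite $(\B_t)$-stopping time $\mathfrak t$ and a map $\omega\mapsto Q_\omega$ as in Definition \ref{almostMark}(b), and set $\bar{\PP}:=\PP\otimes_{\mathfrak t}Q\in\mathrm{Prob}[\Omega_{\tt Euler}]$, which exists by Theorem \ref{Rec}. The plan is to show that $\bar{\PP}$ solves the martingale problem of Definition \ref{dfnMarkov} with initial law $\delta_y$, i.e. that $\bar{\PP}\in\CC(y)$, by checking the properties (a)--(d) of Definition \ref{dfnMarkov} one at a time and, in each case, \emph{splitting at $\mathfrak t$}: on $[0,\mathfrak t]$ one uses that $\bar{\PP}=\PP$ on $\B_{\mathfrak t}$ (Theorem \ref{Rec}(a)) together with $\PP\in\CC(y)$; on $[\mathfrak t,\infty)$ one uses the disintegration identity $\bar{\PP}|_{\B_{\mathfrak t}}^\omega=\delta_\omega\otimes_{\mathfrak t(\omega)}Q_\omega$ (Theorem \ref{Rec}(b)) together with $Q_\omega\circ\Phi_{\mathfrak t(\omega)}\in\CC(\xi_{\mathfrak t(\omega)}(\omega))$, valid outside a $\PP$-nullset, after undoing the time shift; one then integrates the conditional statements against $\PP(\dd\omega)$ exactly as in the proof of Lemma \ref{dis}. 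This procedure also produces the exceptional $\B_{\mathfrak t}$-nullsets $N_a,\dots,N_d$, whose union is the required $\PP$-nullset $N$.

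Property (a) is essentially immediate: $\bar{\PP}=\PP$ on $\B_0$ fixes the initial law $\delta_y$, and the path regularity/support statements are local in time, hence inherited from $\PP$ on $[0,\mathfrak t]$ and from $Q_\omega\circ\Phi_{\mathfrak t(\omega)}\in\CC(\xi_{\mathfrak t(\omega)}(\omega))$ on $[\mathfrak t,\infty)$. For (c) and (d) I would run the concatenation-of-martingales argument of \cite[Theorem 6.1.2]{STVAR}: for $0\leq s\leq t$, a bounded $\B_s$-measurable $h$ and $\mathscr M$ any of $\mathscr M^1(\varphi)$, $\mathscr M^2(\boldsymbol{\varphi})$, the increment $\mathscr M_{s,t}$ splits at $\mathfrak t$ into a part on $[0,\mathfrak t]$, whose $\bar{\PP}$-expectation against $h$ vanishes because $\bar{\PP}=\PP$ on $\B_{\mathfrak t}$ and $\mathscr M$ is a $(\B_t)_{t\geq0}$-$\PP$-martingale, and a part on $[\mathfrak t,\infty)$, whose expectation vanishes after conditioning on $\B_{\mathfrak t}$, applying Theorem \ref{Rec}(b), shifting time, and using the martingale property under $Q_\omega\circ\Phi_{\mathfrak t(\omega)}\in\CC(\xi_{\mathfrak t(\omega)}(\omega))$; the same split applied to $(\mathscr M^1)^2-\mathscr N^1$, $(\mathscr M^2)^2-\mathscr N^2$ and to the corresponding cross combination identifies the quadratic and cross variations. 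Running this for dense families $(\varphi_n)\subset C^\infty(\mt)$ and $(\boldsymbol{\varphi}_n)\subset C^\infty(\mt,R^N)$, as in Lemma \ref{dis}, yields $N_c$ and $N_d$.

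The delicate point, and the one I expect to be the main obstacle, is the energy inequality (b). For an admissible $\psi\in C^\infty_c([0,\infty))$, integrating by parts on $[0,\mathfrak t]$ against cutoffs approximating $\mathbb I_{[0,\mathfrak t)}$ and using the energy inequality of $\PP$ (with its constant $z\geq\mathscr E_0(y)$) bounds $-\int_0^{\mathfrak t}\partial_t\psi\,\mathfrak E_t\dt$ by $\psi(0)z-\psi(\mathfrak t)\,\mathfrak E_{\mathfrak t-}$, where $\mathfrak E_{\mathfrak t-}$ denotes the essential left limit of the (a.e. non-increasing) map $t\mapsto\mathfrak E_t$ at $\mathfrak t$; on $[\mathfrak t,\infty)$, shifting time by $\mathfrak t$ and using $Q_\omega\circ\Phi_{\mathfrak t(\omega)}\in\CC(\xi_{\mathfrak t(\omega)}(\omega))$ bounds $-\int_{\mathfrak t}^{\infty}\partial_t\psi\,\mathfrak E_t\dt$ by $\psi(\mathfrak t)\,z'_\omega$, with $z'_\omega$ the energy constant of that reconstructed piece. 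Summing, one recovers (b) for $\bar{\PP}$ with the same $z$ \emph{provided $z'_\omega\leq\mathfrak E_{\mathfrak t-}$}, i.e. provided the gluing does not introduce an upward jump of the total energy at $\mathfrak t$. Securing this is the crux: one argues that $\mathfrak E$ is $\bar{\PP}$-a.s. (equal to) a non-increasing function and that the energy constant inherited at the junction does not exceed $\mathfrak E_{\mathfrak t-}$, following the lines of \cite[Lemma 4.4]{FlaRom}, \cite{BFHAAP} and \cite[Section 6]{HZZ}. Here this step is considerably lighter than in those references because the energy balance in Definition \ref{dfnMarkov}(b) contains no stochastic integral, so no It\^o correction or quadratic-variation contribution has to be tracked across $\mathfrak t$ and the constant $z$ stays deterministic. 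Collecting $N=N_a\cup N_b\cup N_c\cup N_d$ then completes the proof.
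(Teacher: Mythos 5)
Your proposal takes essentially the same route as the paper: split the verification of Definition \ref{dfnMarkov}(a)--(d) at the stopping time $\mathfrak t$, use $\bar{\PP}=\PP$ on $\mathscr B_{\mathfrak t}$ for the first piece and the conditional laws from Theorem \ref{Rec} together with $Q_\omega\circ\Phi_{\mathfrak t(\omega)}\in\CC(\xi_{\mathfrak t(\omega)}(\omega))$ for the second, handle (a) by integrating against $\PP(\dd\omega)$, handle (c)--(d) via the Stroock--Varadhan martingale-concatenation machinery for the vector process $(\mathscr M^1(\varphi_n),\mathscr M^2(\boldsymbol{\varphi}_n))$ over dense families of test functions, and defer the delicate details to \cite{FlaRom,BFHAAP,HZZ}. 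The only genuine difference is that you spell out the energy step (b) much more explicitly than the paper, which just says ``argue as in property (a).'' You correctly flag the single real subtlety --- ruling out an upward jump of $\mathfrak E$ at the junction, i.e.\ controlling the constant $z'_\omega$ of the reconstructed piece against $\mathfrak E_{\mathfrak t-}$ --- and you rightly observe that this is lighter here than in the It\^o-noise references because the energy balance contains no stochastic integral and $z$ is a deterministic constant. The paper does not elaborate on this point either, so your treatment is at least as complete as the paper's; a fully self-contained argument would still need to make precise why the gluing cannot introduce such a jump (this is where the weak lower semicontinuity $\mathscr E_0(\xi_{\mathfrak t})\leq\mathfrak E_{\mathfrak t-}$ and the role of $z$ as an auxiliary variable, as in the cited references, must be invoked carefully), but that gap is shared with the paper's own terse treatment, not introduced by your proposal.
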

 
 \begin{proof}
 Fix $y\in Y$, $\PP\in \CC(y)$ and let $\mathfrak t$ be $(\B_t)$-stopping {time}. In view of Theorem \ref{Rec},
suppose that $Q_{\omega}$ is a family of probability measures, such that
\[
\Omega_{{\tt Euler}}^{[0,\infty)} \ni \omega \mapsto Q_{\omega} \in\mathrm{Prob}[\Omega_{{\tt Euler}}^{[0,\infty)}]
\]
is $\B_{\mathfrak t}$-measurable and it holds
\begin{align*}
Q_\omega\Big(\tilde\omega\in \Omega_{{\tt Euler}}^{[0,\infty)}:\,\xi_{\mathfrak t(\omega)}(\tilde\omega)=\xi_{\mathfrak t(\omega)}(\omega)\Big)=1
\end{align*}
for any $\omega\in \Omega_{{\tt Euler}}^{[0,\infty)}$.
 Then there exists a unique probability measure $\PP\otimes_{\mathfrak t}Q\in \mathrm{Prob}[\Omega_{{\tt Euler}}^{[0,\infty)}]$ such that:
\begin{itemize}
    \item [(a)] For any Borel set $A \in \mathscr B_{\mathfrak t}$ we have 
    \[
    (\PP\otimes_{\mathfrak t}Q)(A)=\PP(A);
    \]
    \item[(b)] For $\PP\otimes_{\mathfrak t}Q$-a.a. $\omega\in \Omega_{{\tt Euler}}^{[0,\infty)}$ we have
    \[
    (\delta_\omega\otimes_{\mathfrak t(\omega)}Q_\omega)=\PP\otimes_{\mathfrak t}Q|_{\B_{\mathfrak t}}^{{\omega}}.
    \]
\end{itemize}
We aim to prove that if there is a $\mathcal P$-nullset
$N$ such that for any $\omega\notin N$ it holds
    \[
\xi_{\mathfrak t(\omega)}(\omega)\in Y,\quad Q_\omega\circ\Phi_{\mathfrak t(\omega)}\in \mathcal C(\xi_{\mathfrak t(\omega)}(\omega)),
    \]
    then we have $\PP\otimes_{\mathfrak t}Q\in \mathcal{C}(y)$.

 In order to do this we have to verify properties (a)-(d) in Definition \ref{dfnMarkov}. The proof follows along the lines of \cite{FlaRom}, Lemma 4.5. Adopting the notation introduced in Lemma \ref{dis}, we argue as follows:\\
Here we note that $Q_{\omega}$ is a regular conditional probability distribution of  $(\PP\otimes_{\mathfrak t}Q)$ with respect to $\B_{\mathfrak t}$.
\begin{itemize}
    \item [(1)] Since (a) holds for $Q_{\omega}$ we have $Q_{\omega}(H^{\mathfrak t})=1$ such that
    \[
    \PP\otimes_{{\mathfrak t}}Q(H_{\mathfrak t}\cap H^{\mathfrak t}) =\int_{H_{\mathfrak t}}Q_{\omega}[H^{\mathfrak t}]\,\dd \PP(\omega)=1.
    \]
    \item [(2)] For property (b) of Definition \ref{dfnMarkov} we argue as in property (a) (Using the notation developed in Lemma \ref{dis} for each property, respectively).
    \item[(3)] In the case of the properties (c) and (d) we can argue via the process from \eqref{eq:0610}.
It follows again by \cite[Lemma 1.2.10]{STVAR} that it is a
 $((\B_t)_{t\geq 0},\PP\otimes_{\mathfrak t}Q)$-martingale with corresponding quadratic variation.
\end{itemize}
 \end{proof}

\section*{Acknowledgement}

The work of D.B.
was supported by the German Research Foundation (DFG) within the framework of the priority research program SPP 2410 under the grant BR 4302/3-1 (525608987) and under the personal grant BR 4302/5-1 (543675748).

\section*{Conflict of Interest}
The authors have no conflict of interest to declare.

\end{document}